\definecolor{tuwBlue}{RGB}{0,116,178}
\def\cred{\color{black}}
\def\revision#1{{\cred#1}}
\title{Functional a~posteriori error estimates\\ for boundary element methods}
\author{Stefan Kurz}
\author{Dirk Pauly}
\author{Dirk Praetorius}
\author{Sergey Repin}
\author{Daniel Sebastian}
\address{TU Wien, Institute for Analysis and Scientific Computing, Austria}
\email[Dirk Praetorius]{dirk.praetorius@asc.tuwien.ac.at \quad \rm(corresponding author)}
\email[Daniel Sebastian]{daniel.sebastian@asc.tuwien.ac.at}
\address{TU Darmstadt, Centre for Computational Engineering, Germany}
\email[Stefan Kurz]{kurz@gsc.tu-darmstadt.de}
\address{Universit\"at Duisburg-Essen, Fakult\"at f\"ur Mathematik, Germany}
\email[Dirk Pauly]{dirk.pauly@uni-due.de}
\address{University of Jyv\"askyl\"a, MIT, Finland}
\email[Sergey Repin]{sergey.repin@mit.jyu.fi}
\address{RAS, Steklov Institute of Mathematics at St. Petersburg, Russia}
\email[Sergey Repin]{repin@pdmi.ras.ru}
\keywords{boundary element method, functional {\sl a~posteriori} error estimate, adaptive mesh-refinement}
\subjclass[2010]{65N38, 65N15, 65N50}
\thanks{{\bf Acknowledgement.} D.~Sebastian and D.~Praetorius thankfully acknowledge support by the 
Austrian Science Fund (FWF) through 
the SFB \emph{Taming complexity in partial differential systems}, 
and the stand-alone project \emph{Optimal adaptivity for BEM and FEM-BEM coupling} (grant P27005).
The work of S.~Kurz was supported in part by the Excellence Initiative of the German Federal and 
State Governments, and in part by the \emph{Graduate School of Computational Engineering} at TU Darmstadt.}
\def\set#1#2{\big\{#1 \,:\, #2 \big\}}
\def\d#1{\,{\rm d}#1}
\def\str#1#2{#1|_{#2}}
\def\ntr#1#2{\normal \cdot #1|_{#2}}
\def\ttr#1#2{\normal \times #1|_{#2}}
\renewcommand{\H}{\mathsf{H}}
\DeclareMathOperator{\sfC}{\mathsf{C}}
\DeclareMathOperator{\sfL}{\mathsf{L}}
\newcommand{\mmin}{\mathbf{\underline{\mathfrak{M}}}}
\newcommand{\mmax}{\mathbf{\overline{\mathfrak{M}}}}
\def\N{\mathbb{N}}
\def\MM{\mathcal{M}}
\def\TT{\mathcal{T}}
\def\OO{\mathcal{O}}
\def\R{\mathbb{R}}
\def\SSS{\mathcal{S}}
\def\FF{\mathcal{E}}
\def\FF{\mathcal{F}}
\def\PP{\mathcal P}
\def\ttau{\boldsymbol{\tau}}
\def\ssigma{\boldsymbol{\sigma}}
\def\normal{\boldsymbol{n}}
\def\aa{\boldsymbol{a}}
\def\eps{\varepsilon}
\DeclareMathOperator{\diam}{diam}
\DeclareMathOperator{\dist}{dist}
\DeclareMathOperator{\supp}{supp}
\DeclareMathOperator{\di}{div}
\renewcommand{\div}{\di}
\DeclareMathOperator{\curl}{curl}
\def\grad{\nabla}
\DeclareMathOperator{\Linfty}{\sfL^\infty}
\def\Lt{\sfL^2}
\def\Ho{\H^{1}}
\def\Hoom{\Ho(\Omega)}
\def\Hozom{\Ho_{0}(\Omega)}
\def\Hdiv{\H(\div)}
\def\Hdivom{\H(\div,\Omega)}
\def\Hcurlom{\H(\curl,\Omega)}
\def\RTz{\mathsf{RT}^{0}}
\def\RTq{\mathsf{RT}^{q}}
\def\RTzg{\RTz_{\Gamma^{c}}}
\def\RTqg{\RTq_{\Gamma^{c}}}
\def\norm#1#2{|\hspace*{-1.5pt}|#1|\hspace*{-1.5pt}|_{#2}}
\def\bnorm#1#2{\big|\hspace*{-2pt}\big|#1\big|\hspace*{-2pt}\big|_{#2}}
\def\scp#1#2#3{\langle#1 \, , \, #2\rangle_{#3}}
\def\bscp#1#2#3{\big\langle#1 \, , \, #2\big\rangle_{#3}}
\def\dualpga#1#2{\langle#1\,,\,#2\rangle_{\Gamma}}
\def\bdualpga#1#2{\big\langle#1,#2\big\rangle_{\Gamma}}
\def\conststyle{\sf}
\def\Cstab{C_{\conststyle stab}}
\def\Cosc{C_{\conststyle osc}}
\newcommand*\patchAmsMathEnvironmentForLineno[1]{%
  \expandafter\let\csname old#1\expandafter\endcsname\csname #1\endcsname
  \expandafter\let\csname oldend#1\expandafter\endcsname\csname end#1\endcsname
  \renewenvironment{#1}%
     {\linenomath\csname old#1\endcsname}%
     {\csname oldend#1\endcsname\endlinenomath}}%
\newcommand*\patchBothAmsMathEnvironmentsForLineno[1]{%
  \patchAmsMathEnvironmentForLineno{#1}%
  \patchAmsMathEnvironmentForLineno{#1*}}%
\def\@seccntformat#1{\vspace*{-2mm}\newline\hspace*{4mm}%
  \protect\textup{\protect\@secnumfont
    \ifnum\pdfstrcmp{subsection}{#1}=0 \bfseries\fi
    \csname the#1\endcsname
    \protect\@secnumpunct
  }%
}
\def\paragraph{\@startsection{paragraph}{4}%
  \z@\z@{-\fontdimen2\font}%
  {\normalfont\bfseries}}
\def\section{\@startsection{section}{1}%
\z@{.7\linespacing\@plus\linespacing}{.5\linespacing}%
{\normalsize\scshape\bfseries\centering}}
\renewcommand{\@secnumfont}{\bfseries}
\newcounter{statement}
\newenvironment{statement}[2][!]{%
\vskip2mm
\noindent%
\refstepcounter{statement}%
\bf#2~\thestatement%
\ifthenelse{\equal{#1}{!}}{.\ }{~(#1).\ }%
\it%
}{%
\vskip1mm
}
\newenvironment{theorem}[1][!]{\begin{statement}[#1]{Theorem}}{\end{statement}}
\newenvironment{lemma}[1][!]{\begin{statement}[#1]{Lemma}}{\end{statement}}
\newenvironment{definition}[1][!]{\begin{statement}[#1]{Definition}}{\end{statement}}
\newenvironment{corollary}[1][!]{\begin{statement}[#1]{Corollary}}{\end{statement}}
\newenvironment{remark}[1][!]{\begin{statement}[#1]{Remark}}{\end{statement}}
\newenvironment{algorithm}[1][!]{\begin{statement}[#1]{Algorithm}}{\end{statement}}
\begin{document}

\begin{abstract}
Functional error estimates are \revision{well-established} tools for \emph{a~posteriori} error estimation and related adaptive mesh-refinement for the finite element method (FEM). The present work proposes a first functional error estimate for the boundary element method (BEM). One key feature is that the derived error estimates are independent of the BEM discretization and provide guaranteed lower and upper bounds for the unknown error.
\revision{In particular, our analysis covers Galerkin BEM and the collocation method, what makes the approach of particular interest for scientific computations and engineering applications.}
Numerical experiments for the Laplace problem confirm the theoretical results.
\end{abstract}

\maketitle
\thispagestyle{fancy}

\tableofcontents


\section{Introduction}

\noindent
Let $\Omega \subset \R^d$, $d \geq 2$, be a bounded Lipschitz domain with polygonal boundary $\Gamma := \partial\Omega$.
\revision{To present the main ideas and our first numerical results, we consider}
the Poisson problem with inhomogeneous Dirichlet boundary data $g$, i.e.,
\begin{align}
\label{eq:strongform}
\Delta u=0\quad\text{in }\Omega,\qquad 
u=g\quad\text{on }\Gamma.
\end{align}
Throughout the paper, we assume that $d \in \{2,3\}$. However, all results can easily be extended
to higher dimensions.
For the numerical solution of~\eqref{eq:strongform}, 
we employ the boundary element method (BEM); see, e.g.,~\cite{steinbach,sauter-schwab,gwinner-stephan}. 
\revision{Again for} the ease of presentation, let us consider an indirect ansatz based on the single-layer potential 
\begin{align}\label{eq:slp}
(\widetilde V\phi)(x) := \int_{\Gamma} G(x-y) \, \phi(y) \d{y} = u(x)
\quad \text{for all } x\in\Omega
\end{align}
with unknown integral density $\phi$, where, \revision{for $z \in \R^d \backslash \{0\}$,}
$G(z) = -\frac{1}{2\pi} \, \log |z|$ for $d = 2$ resp. 
$G(z) = \frac{1}{4\pi} |z|^{-1}$ for $d = 3$ 
denotes the fundamental solution of the Laplacian.
Taking the trace on $\Gamma$, the potential ansatz leads to the weakly-singular integral equation
\begin{align}\label{eq:weaksing}
(V \phi)(x) = g(x) \quad \text{for almost all } x\in\Gamma,
\end{align}
where the integral representation of $g=V\phi$ 
coincides with that of $u=\widetilde V\phi$ (at least for bounded densities) 
but is now evaluated on $\Gamma$ (instead of inside $\Omega$). 
\revision{For ellipticity of the operator $V$, we suppose
that $\diam(\Omega) < 1$ in case of $d = 2$, which can always be achieved by scaling.}
Given a triangulation $\FF^\Gamma_h$ of the boundary $\Gamma$, 
the latter equation is solved by the lowest-order BEM and provides some
piecewise constant approximation $\phi_h$, i.e.,
\begin{align}
\label{eq:def:P0}
\phi\approx\phi_h\in\PP^0(\FF^\Gamma_h),
\end{align}
where the precise discretization (e.g., Galerkin BEM, collocation, etc.) will not be exploited by our analysis.
However, as a BEM inherent characteristic, we obtain an approximation of the potential $u \approx u_h:= \widetilde V \phi_h$, which satisfies the Laplace problem 
\revision{\begin{align}
\label{Laplaceprob}
\Delta u_h= 0 \quad\text{in } \Omega.
\end{align}}%
\revision{Note that here --- contrary to the usual notations ---
$u_{h}$ is \emph{not} a discrete function
but computed by an integral operator applied to a discrete function, i.e., $u_h$ is data sparse.
We emphasize that~\eqref{Laplaceprob}} is the key argument for the error identity
\begin{align}
\label{eq:erroridentity}
\max_{\substack{\ttau\in\Lt(\Omega)\\\div\ttau=0}} 
\Big(2\,\dualpga{g-\str{u_h}{\Gamma}}{\ntr{\ttau}{\Gamma}}
-\norm{\ttau}{\Lt(\Omega)}^2\Big)
&\!=\!\bnorm{\grad(u-u_h)}{\Lt(\Omega)}^2
\!=\!\!\!\min_{\substack{w\in\Hoom\\\str{w}{\Gamma}=g-\str{u_h}{\Gamma}}}
\norm{\grad w}{\Lt(\Omega)}^2,
\end{align}
where $\dualpga{\cdot}{\cdot}$ denotes the extended $\Lt(\Gamma)$ scalar product (see Theorem~\ref{prop:continuousmajmin} below).
The identities~\eqref{eq:erroridentity} generate {\it a posteriori error estimates of the functional type} that
are independent of the discretization and provide fully guaranteed lower and upper bounds for the unknown error 
without any constants at all.
In general, these functional type a posteriori estimates
involve only constants in  basic functional inequalities associated with the concrete problem
(e.g., Poincar\'e--Friedrichs type or trace inequalities) and are applicable for {\em any} approximation
from the admissible energy class (see~\cite{repin00,zbMATH06636825,MR3935892,zbMATH07143777} or the monograph~\cite{repinbook} 
and the references cited therein).
In particular, the equations~\eqref{eq:erroridentity} have also been used in~\cite{repinbook} 
for the analysis of errors arising in the Trefftz method.

From~\eqref{eq:erroridentity}, constant-free (i.e., with known constant $1$) lower
and upper bounds for the unknown \emph{potential error} $\norm{\grad(u-u_h)}{\Lt(\Omega)}$ 
can be obtained by choosing \emph{arbitrary} instances of $\ttau$ and $w$.
In the present work, we compute these bounds by solving problems 
in a suitable boundary layer $S \subset \Omega$ along $\Gamma$ by use of the finite element method (FEM).
Moreover, these bounds are then employed to drive an adaptive mesh-refinement 
for the triangulation $\FF^{\Gamma}_h$ of $\Gamma$ 
and, as novelty, also quantify the accuracy $\norm{\nabla(u-u_h)}{\Lt(\Omega)}$ of the BEM induced potential 
$u_h= \widetilde V \phi_h$ in each step of the adaptive algorithm. 
In particular, the latter quantification is essentially constant-free (up to data oscillations terms arising for the FEM majorant) and can thus also be used as reasonable stopping criterion for adaptive BEM computations. 
Especially for practical applications, this is an important step forward, 
since there exist neither \emph{a posteriori} error estimates with constant $1$ 
nor estimates for physically relevant errors. While available results focus 
on the density $\phi_h$ (see, e.g., \cite{cs1995,cc1997,mundstephanweiss98,cms2001,eh2006,MR2529605} for some prominent results or the surveys \cite{cf01,Feischl2015} and the references therein), estimating rather the energy error of $u_h$ circumvents, 
in particular, BEM-natural challenges like the localization of non-integer Sobolev norms. 
\revision{It is quite natural that these serious advantages of the proposed error estimation strategy are associated with certain technical complications that arise because we need to generate a volume mesh at least for some boundary layer $S \subset \Omega$ along $\Gamma$ on which we solve
auxiliary FEM problems. However, the ratio between the number of degrees of freedom (DoF) for obtaining the error estimates and the BEM DoF remains bounded, so that additional computational expenditures remain limited. Moreover, examples show that very good error bounds can be obtained when the ratio is between one and three.} \revision{Finally, we note that the generation of the volume mesh appears to be a standard problem for FEM mesh generation, where usually, like in computer aided design (CAD), only the surface $\Gamma$ is given.} 

\bigskip
\textbf{Outline.} 
The remainder of this work is organized as follows:
In Section~\ref{section:notation}, we collect the necessary notations
as well as the fundamental properties of (Galerkin) BEM. In Section~\ref{section:approach}, 
we formulate our approach for functional {\sl a~posteriori} error estimation. 
Theorem~\ref{prop:continuousmajmin} states the error identity~\eqref{eq:erroridentity}. 
Theorem~\ref{cor:continuousupperbound}
provides a computable upper bound on~\eqref{eq:erroridentity} by means of an $\Ho$-conforming FEM approach
as well as a computable lower bound on~\eqref{eq:erroridentity} 
by means of an $\Hdiv$-conforming mixed FEM approach. 
Section~\ref{section:algorithm} shows how these findings can be used to steer an adaptive mesh-refinement. 
Algorithm~\ref{algorithm} formulates such a strategy with reliable error control on 
$\norm{\grad(u-u_h)}{\Lt(\Omega)}$.
In Section~\ref{section:numerics}, we employ the proposed adaptive algorithm 
to underpin our theoretical findings by some numerical experiments \revision{with lowest-order Galerkin BEM in 2D}.
Section~\ref{section:extensions} concludes the work with natural 
extensions of our 
\revision{approach (even covered by our analytical results)} like higher-order BEM, alternative BEM discretizations like collocation, direct BEM formulations, and error control for exterior domain problems (where $\Omega$ is unbounded),
\revision{underlining the independence of our error estimators of the actual problem and approximation method.}
The final Section~\ref{section:conclusion} summarizes the contributions of the present work and addresses possible topics for future research.

\section{Preliminaries and notation}
\label{section:notation}

\subsection{Domains and function spaces}
\label{section:domainsfspaces}

Throughout this paper, 
let $\Omega \subset \R^d$, $d\in\{2,3\}$, 
be a bounded Lipschitz domain (i.e., locally below the graph of some Lipschitz function)
with boundary $\Gamma = \partial \Omega$ and \revision{exterior unit} normal vector field $\normal$. 
For all numerical results involving discretisations, we assume that $\Gamma$ is a polygon.
We denote by $\scp{\cdot}{\cdot}{\Lt(\Xi)}$ and $\norm{\cdot}{\Lt(\Xi)}$
the standard inner product and norm in $\Lt(\Xi)$, respectively, 
where, e.g., $\Xi\in\{\Omega,\Gamma\}$.
Based on $\Lt(\Omega)$, we define the Hilbert spaces
\begin{align*}
\Hoom 
&:= \set{\varphi\in \Lt(\Omega)}{\grad\varphi\in \Lt(\Omega)},\\
\Hdivom 
&:= \set{\ssigma\in\Lt(\Omega)}{\div\ssigma\in\Lt(\Omega)}.
\end{align*}
The corresponding inner products and (induced) norms are $\scp{\cdot}{\cdot}{\Hoom}$ and $\norm{\cdot}{\Hoom}$ resp.\ $\scp{\,\cdot\,}{\,\cdot\,}{\Hdivom}$ and $\norm{\cdot}{\Hdivom}$.
Moreover, introducing the scalar trace operator 
$\str{(\cdot)}{\Gamma}:\Hoom\to\Lt(\Gamma)$,
\revision{our analysis also employs the closed subspace of $\Hoom$
\begin{align*}
\Hozom :=\set{\varphi\in\Hoom}{\str{\varphi}{\Gamma}=0}
\end{align*}
and the trace space $\H^{1/2}(\Gamma) := \set{\varphi|_\Gamma\!}{\!\varphi \in H^1(\Omega)}$ equipped with the natural quotient norm
\begin{align*}
 \norm{f}{\H^{1/2}(\Gamma)} &:= \inf\set{\norm{\varphi}{\Hoom}\!}{\!\varphi\in \Hoom 
 \text{ with } \str{\varphi}{\Gamma} = f}\quad
 \text{for all } f\in\H^{1/2}(\Gamma).
\end{align*}
A standard construction (see the subsequent Remark~\ref{remharmext}) yields a harmonic extension operator
$\widehat{(\cdot)} : \H^{1/2}(\Gamma) \to \Ho(\Omega)$
which satisfies $\norm{\grad\widehat{f}}{\Lt(\Omega)}\leq\norm{f}{\H^{1/2}(\Gamma)}$
for all $f\in\H^{1/2}(\Gamma)$.}

\revision{\begin{remark}
\label{remharmext}
In fact, the minimal extension $\varphi\in \Hoom$ of $f \in \H^{1/2}(\Gamma)$ 
satisfies $\norm{f}{\H^{1/2}(\Gamma)} = \norm{\varphi}{\Hoom}$ 
and can be found as the unique weak solution of 
\begin{align}\label{eq:lifting:kill}
- \Delta \varphi + \varphi
= 0 \quad \text{in } \Omega,
\qquad
\str{\varphi}{\Gamma}
= f \quad \text{on } \Gamma.
\end{align}
The ansatz $\widehat{f}=\varphi+\varphi_{0}$ with $\varphi_{0}\in\Hozom$
solving $\Delta\varphi_{0}=-\varphi$
yields a harmonic extension $\widehat{f}\in\Hoom$
of $f\in\H^{1/2}(\Gamma)$, i.e.,
\begin{align}
\label{eq:lifting}
\Delta \widehat{f}
= 0 \quad \text{in } \Omega, 
\qquad
\str{\widehat{f}}{\Gamma}
= f \quad \text{on } \Gamma.
\end{align}
From $\norm{\grad\widehat{f}}{\Lt(\Omega)}^2
=\scp{\grad\widehat{f}}{\grad\varphi}{\Lt(\Omega)}$,
it follows that
$\norm{\grad\widehat{f}}{\Lt(\Omega)}\leq\norm{\grad\varphi}{\Lt(\Omega)}\leq\norm{f}{\H^{1/2}(\Gamma)}$. 
\qed
\end{remark}}%

Finally, we need the dual space $\H^{-1/2}(\Gamma) := \H^{1/2}(\Gamma)'$
equipped with the natural norm
\begin{align*}
 \norm{f}{\H^{-1/2}(\Gamma)} := \sup_{0\neq\psi\in\H^{1/2}(\Gamma)} 
 \frac{\dualpga{\psi}{f}}{\norm{\psi}{\H^{1/2}(\Gamma)}},
\end{align*}
where the $\H^{1/2}(\Gamma) \times \H^{-1/2}(\Gamma)$-duality product $\dualpga{\cdot}{\cdot}$ extends, as usual, the $\Lt(\Gamma)$ scalar product $\scp{\cdot}{\cdot}{\Lt(\Gamma)}$. \revision{We stress that $\Gamma = \partial\Omega$ and hence $\H^{1/2}(\Gamma)=\widetilde\H^{1/2}(\Gamma)$.}
We recall the Gelfand triple $\H^{1/2}(\Gamma)\subset\Lt(\Gamma)\subset\H^{-1/2}(\Gamma)$
and refer to \cite{boffibrezzifortin} for the fact that $\H^{-1/2}(\Gamma)$ can also be characterised as the range of normal traces
$\ntr{(\boldsymbol{\cdot})}{\Gamma}:\Hdivom\to\H^{-1/2}(\Gamma)$
of $\Hdivom$-vector fields, i.e.,
\begin{align*}
\H^{-1/2}(\Gamma) = \set{\ntr{\ssigma}{\Gamma}}{\ssigma\in\Hdivom}.
\end{align*}

\begin{definition}[Boundary layer]
\label{defS}
A subset $S \subset \Omega$ is called a \emph{boundary layer}, if it is a Lipschitz domain with $\Gamma \subset \partial S$, which admits a conforming triangulation $\TT_h^S$ into simplices. We then define $\Gamma^{c}:=\partial S \setminus \Gamma$. 
In particular, we define the corresponding induced triangulation of $\Gamma$ by
\begin{align}\label{eq:boundary-mesh}
\FF^{\Gamma}_h
:=\TT^S_h|_{\Gamma}
:=\set{F}{F\subset\Gamma\text{ and }F\text{ is a face of some simplex $T\in\TT^S_h$}}.
\end{align}
\end{definition}

For $q\in\N_{0}$ and $\PP^{q}$ being the space of polynomials of degree $q$, we define
\begin{align*}
\PP^{q}(\TT^S_h)
&:=\set{\varphi_h\in\Linfty(S)}{\str{\varphi_h}{T}\in\PP^{q}
\text{ for all }T\in\TT^S_h},\\
\PP^{q}(\FF^{\Gamma}_h)
&:=\set{\psi_h\in\Linfty(\Gamma)}{\str{\psi_h}{F}\in\PP^{q}
\text{ for all }F\in\FF^{\Gamma}_h}. \\
\intertext{Moreover, for $p \in \N$, we employ the standard $\Ho$-conforming FEM spaces}
\SSS^p(\TT^S_h) 
&:=\set{\varphi_h\in\sfC^{0}(\overline{S})}{\str{\varphi_h}{T}\in\PP^p
\text{ for all }T\in\TT^S_h} \subset \Ho(S), \\
\SSS^p_{0}(\TT^S_h)
&:= \set{\varphi_h \in \SSS^p(\TT_h^S)}{\str{\varphi_h}{\partial S} = 0}\subset \Ho_0(S), \\
\SSS^p_{\Gamma^c}(\TT^S_h)
&:= \set{\varphi_h \in \SSS^p(\TT_h^S)}{\str{\varphi_h}{\Gamma^c} = 0}.
\end{align*}
Let $\FF^S_h$ denote the set of all interior faces, i.e., 
all $F\in\FF^S_h$ admit unique $T_{+},\, T_{-}\in\TT^S_h$ with $F = T_{+} \cap T_{-}$. 
For $q\in\N_{0}$, we define the $\Hdiv$-conforming Raviart--Thomas space
\begin{align*}
\RTq (\TT^S_h) 
&= \set{\ssigma_h\in\Linfty(S)}{\forall \, T\in\TT^S_h\quad 
\exists\, (\aa ,b)\in\PP^{q}(\R^d)^{d} \times \PP^{q}(\R^d) \quad \forall\, x\in T 
\\ &\hspace{10mm} 
\ssigma_h(x) = \aa(x) + b(x) \, x \quad\text{ and }\quad
 \forall \, F\in\FF^S_h\quad \normal_F \cdot[\ssigma_h]_F  = 0} \subset \H(\div,S),
\end{align*}
where $\normal_F$ is a normal vector for the face $F\in\FF^S_h$
and $[\ssigma_h]_F := \str{\ssigma_h}{T_+} - \str{\ssigma_h}{T_-}$ 
denotes the jump of $\ssigma_h$ across $F$. 
Based on that, we let
\begin{align*}
\RTq_{\Gamma^{c}} (\TT^S_h) 
:= \set{\ssigma_h\in\RTq (\TT^S_h)}{\ntr{\ssigma_h}{\Gamma^{c}} = 0}.
\end{align*}

\begin{remark}
\label{rem:extensionbyzero}
In the proofs of Section~\ref{section:approach} below, we exploit that for arbitrary $v_h \in \SSS^p_{\Gamma^c}(\TT_h^S)$ and $\ssigma_h \in \RTq_{\Gamma^c}(\TT_h^S)$ the definitions
\begin{align}
\check v_h := \begin{cases} v_h & \text{in }S \\ 0 & \text{in } \Omega \setminus S\end{cases} \qquad \text{and} \qquad \check \ssigma_h :=\begin{cases} \ssigma_h & \text{in } S \\ 0 & \text{in } \Omega\setminus S\end{cases}
\end{align}
provide conforming extensions $\check v_h \in \Ho(\Omega)$ and $\check\ssigma_h \in \Hdivom$. In particular, we will implicitly identify $v_h$ (resp. $\ssigma_h$) with its zero-extension $\check v_h$ (resp.~$\check\ssigma_h$).
\qed
\end{remark}

\subsection{General problem setting}\label{sec:genprobset}

From now on, we assume 
that $\Omega$, $\Gamma$, and a boundary layer $S$  
together with $\Gamma^{c}$ and corresponding FEM spaces are given.
Let $g\in\H^{1/2}(\Gamma)$ and let $u\in\Hoom$ be the unique 
solution of the homogeneous Dirichlet--Laplace problem
\begin{subequations}\label{eq:strongform0}
\begin{align}
\label{eq:strongform1}\Delta u &= 0 \quad \text{in } \Omega,\\
\label{eq:strongform2}u &= g \quad \text{on } \Gamma.
\end{align}
\end{subequations}
In particular, we have $\grad u\in\Hdivom$ with $\div\grad u=0$. 
Note that $u=\widehat g\in\Hoom$ is the unique harmonic extension of $g$
with $\norm{\grad u}{\Lt(\Omega)}\leq\norm{g}{\H^{1/2}(\Gamma)}$;
see~\eqref{eq:lifting}.

\subsection{Weakly-singular integral equation}

The single-layer potential~\eqref{eq:slp} provides a continuous linear operator 
$\widetilde V: \H^{-1/2}(\Gamma) \to \Hoom$. 
Moreover, its concatenation with the trace defines a continuous linear operator
$V:\H^{-1/2}(\Gamma)\to\H^{1/2}(\Gamma)$, which is elliptic on $\H^{-1/2}(\Gamma)$
(under the scaling condition $\diam(\Omega) < 1$ for $d = 2$). 
Hence, the Lax--Milgram lemma guarantees existence and uniqueness of 
$\phi\in\H^{-1/2}(\Gamma)$ such that
\begin{align}
\label{eq:weakform}
\dualpga{V\phi}{\psi}
=\dualpga{g}{\psi}
\quad\text{for all }\psi\in\H^{-1/2}(\Gamma).
\end{align}
According to the Hahn--Banach theorem, the latter variational formulation 
is equivalent to the identity $V \phi = g$ in $\H^{1/2}(\Gamma)$ from~\eqref{eq:weaksing}.
For details on elliptic boundary integral equations, we refer, e.g., to the monographs~\cite{mclean,hsiao-wendland}.

\subsection{Galerkin boundary element method}

Given a triangulation $\FF^{\Gamma}_h$ of $\Gamma$, 
the lowest-order Galerkin BEM seeks $\phi_h\in \PP^0(\FF^{\Gamma}_h)$, which solves the discretized weak form
\begin{align}
\label{eq:discreteweakform}
\scp{V\phi_h}{\psi_h}{\Lt(\Gamma)}
=\scp{g}{\psi_h}{\Lt(\Gamma)}
\quad\text{for all }\psi_h\in\PP^0(\FF^{\Gamma}_h). 
\end{align}
The Lax--Milgram lemma also applies to the conforming Galerkin discretization and proves 
existence and uniqueness of $\phi_h\in \PP^0(\FF^{\Gamma}_h)$. 
We note that in the discrete version~\eqref{eq:discreteweakform} of~\eqref{eq:weakform} the $\H^{1/2}(\Gamma) \times \H^{-1/2}(\Gamma)$ duality product coincides, in fact, with the $\Lt(\Gamma)$ scalar product.
For details on the (Galerkin) boundary element method, we refer, e.g., to the monographs~\cite{steinbach,sauter-schwab,gwinner-stephan}.

\section{Functional a~posteriori BEM error estimation}
\label{section:approach}

In this section, we prove the error identity~\eqref{eq:erroridentity} and provide efficiently computable upper and lower bounds for the potential error $\norm{\nabla (u - u_h)}{\Lt(\Omega)}$, where $u \in \Ho(\Omega)$ solves~\eqref{eq:strongform0} and $u_h := \widetilde V \phi_h$ is defined in~\eqref{eq:slp}.

\subsection{Functional error identity}

The fact that the error $u-u_h$ satisfies~\eqref{eq:strongform1} exactly is a powerful tool. However, the consideration of the potential $u_h$ from a BEM comes with a drawback: it is not a discrete function and lacks further {\sl a~priori} knowledge like the Galerkin orthogonality, which is obviously never available for any approximation $u_h := \widetilde V \phi_h \approx u \in \Hoom$. Functional {\sl a~posteriori} error estimates are eminently suitable for the BEM, since they do not require any such {\sl a~priori} assumption. On top of that, for problems with homogeneous (volume) right-hand sides,
they provide \emph{constant-free} error identities. 
For the Laplacian, the key argument is the Dirichlet principle: 
\begin{center}Harmonic functions are minimisers of the Dirichlet energy $\norm{\grad w}{\Lt(\Omega)}^2$.\end{center}
Note that the boundary residual $g-\str{u_h}{\Gamma}\in\H^{1/2}(\Gamma)$ is essential 
for both the majorant $\mmax$ and the minorant $\mmin$, 
\revision{see~\eqref{eq:prop:continuousmajmin} in Theorem \ref{prop:continuousmajmin} for definitions,}
and comprises all relevant information about the error.

\begin{theorem}[Functional \textsl{a~posteriori} error identities]
\label{prop:continuousmajmin}
\revision{Let $g\in\H^{1/2}(\Gamma)$ and let $u \in \Ho(\Omega)$ be the unique solution of~\eqref{eq:strongform0}.}
For any approximation $v\in\Hoom$ with $\Delta v=0$,
the equalities~\eqref{eq:erroridentity} hold true.
More precisely, 
\begin{subequations}
\label{eq:prop:continuousmajmin}
\begin{align}
\label{eq:continuousmajmin}
 \max_{\substack{\ttau\in\Lt(\Omega) \\ \div\ttau = 0}} \mmin(\ttau;\str{v}{\Gamma},g)
 = \bnorm{\grad(u-v)}{\Lt(\Omega)}^2
 = \min_{\substack{w\in\Hoom \\ \str{w}{\Gamma} = g - \str{v}{\Gamma}}} \mmax(\grad w),
\end{align}
where 
\begin{align}
\label{def:functionals}
\mmin(\ttau;\str{v}{\Gamma},g) 
:=  2 \,\dualpga{g-\str{v}{\Gamma}}{\ntr{\ttau}{\Gamma}} 
 - \norm{\ttau}{\Lt(\Omega)}^2,\qquad
\mmax(\grad w) 
:= \norm{\grad w}{\Lt(\Omega)}^2.
\end{align}
\end{subequations}
The unique maximiser is $\underline{\ttau} = \grad (u-v)$. 
The unique minimiser is $\overline w = u - v$.
\end{theorem}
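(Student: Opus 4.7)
The plan is to prove both identities by reducing them to an orthogonality argument (on the right) and a completion-of-the-square argument (on the left), both centered on the exact error $e := u - v$. Observe that $e \in \Hoom$ satisfies $\Delta e = 0$ in $\Omega$ and $\str{e}{\Gamma} = g - \str{v}{\Gamma}$, so $e$ is a feasible competitor for the minimization on the right, and $\grad e \in \Hdivom$ with $\div \grad e = 0$ is a feasible competitor for the maximization on the left; in both cases I expect it to be the unique optimizer.

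For the right-hand identity, I would apply the Dirichlet principle directly. For any feasible $w$, the difference $w - e$ belongs to $\Hozom$, and weak harmonicity of $e$ gives
\begin{align*}
\scp{\grad e}{\grad(w-e)}{\Lt(\Omega)} = 0.
\end{align*}
The Pythagoras identity $\norm{\grad w}{\Lt(\Omega)}^2 = \norm{\grad e}{\Lt(\Omega)}^2 + \norm{\grad(w-e)}{\Lt(\Omega)}^2$ then yields $\mmax(\grad w) \geq \norm{\grad(u-v)}{\Lt(\Omega)}^2$ with equality iff $w = e$, proving both the identity and uniqueness of the minimizer $\overline{w} = u - v$.

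For the left-hand identity, my plan is first to eliminate the boundary duality pairing via a Gauss--Green identity and then to complete the square. Since every admissible $\ttau \in \Lt(\Omega)$ with $\div\ttau = 0$ belongs to $\Hdivom$, the normal trace $\ntr{\ttau}{\Gamma} \in \H^{-1/2}(\Gamma)$ is well defined as recalled in Section~\ref{section:domainsfspaces}, and for $e \in \Hoom$ the extended Gauss--Green formula would give
\begin{align*}
\dualpga{g - \str{v}{\Gamma}}{\ntr{\ttau}{\Gamma}} = \dualpga{\str{e}{\Gamma}}{\ntr{\ttau}{\Gamma}} = \scp{\grad e}{\ttau}{\Lt(\Omega)} + \scp{e}{\div\ttau}{\Lt(\Omega)} = \scp{\grad e}{\ttau}{\Lt(\Omega)}.
\end{align*}
Substituting into the definition of $\mmin$ and completing the square then yields
\begin{align*}
\mmin(\ttau;\str{v}{\Gamma},g) = 2\,\scp{\grad e}{\ttau}{\Lt(\Omega)} - \norm{\ttau}{\Lt(\Omega)}^2 = \norm{\grad e}{\Lt(\Omega)}^2 - \norm{\ttau - \grad e}{\Lt(\Omega)}^2,
\end{align*}
whose unique maximizer is $\underline{\ttau} = \grad e = \grad(u-v)$ with maximum value $\norm{\grad(u-v)}{\Lt(\Omega)}^2$.

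The only nontrivial step is the rigorous Gauss--Green identity above, since $\ttau$ is merely an $\Lt$-vector field with weak divergence and the boundary interaction must be interpreted through the $\H^{\pm 1/2}(\Gamma)$-duality. I would justify it by the standard density argument on Lipschitz domains together with continuity of the trace operators as recalled in Section~\ref{section:domainsfspaces}; after that step, everything else is elementary Hilbert-space algebra and the two equalities together with the uniqueness of the optimizers follow immediately.
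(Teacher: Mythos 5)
Your proposal is correct and follows essentially the same route as the paper: orthogonality from harmonicity of $u-v$ for the minimization (the paper uses Cauchy--Schwarz where you use Pythagoras, which also gives uniqueness directly), and the Gauss--Green identity plus completing the square for the maximization (the paper packages the latter as the abstract Hilbert-space identity $\norm{a}{\mathcal H}^2 = \max_b (2\scp{a}{b}{\mathcal H} - \norm{b}{\mathcal H}^2)$). Your explicit flagging of the Gauss--Green step for divergence-free $\Lt$-fields is the one point the paper leaves implicit, and it is justified exactly as you indicate.
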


\begin{proof}
The proof is split into two parts.

$\bullet$ \textbf{Upper bound:} 
Let $\widetilde w\in\Hoom$ with $\str{\widetilde w}{\Gamma} = \str{u}{\Gamma} = g$.
Since we have $\Delta(u-v)=0$ and $u-\widetilde w\in\Hozom$, integration by parts shows that
\begin{align*}
\bnorm{\grad(u-v)}{\Lt(\Omega)}^2 
= \underbrace{\bscp{\grad(u-\widetilde w)}{\grad(u-v)}{\Lt(\Omega)}}_{=0} + \bscp{\grad (\widetilde w-v)}{\grad(u-v)}{\Lt(\Omega)}.
\end{align*}
This yields
$\norm{\grad(u-v)}{\Lt(\Omega)} \leq
\norm{\grad(\widetilde w-v)}{\Lt(\Omega)}$.
The substitution $w:=\widetilde w - v$ proves that
\begin{align*}
\bnorm{\grad(u-v)}{\Lt(\Omega)} \leq \inf_{\substack{w\in\Hoom \\ \str{w}{\Gamma}
= g - \str{v}{\Gamma}}} \norm{\grad w}{\Lt(\Omega)}.
\end{align*}
The unique infimum is attained at $w = u-v$.

$\bullet$ \textbf{Lower bound:} 
In any Hilbert space $\mathcal{H}$
\revision{with inner product $\scp{\cdot}{\cdot}{\mathcal{H}}$ and induced norm $\norm{\cdot}{\mathcal{H}}$,} it holds that
\begin{align*}
\norm{a}{\mathcal{H}}^2 
= \max_{b\in\mathcal{H}} \left(2 \,\scp{a}{b}{\mathcal{H}} - \norm{b}{\mathcal{H}}^2 \right)
\quad \text{for all } a\in\mathcal{H},
\end{align*}
where the maximum is unique and attained for $b = a$.
Since 
\begin{align*}
\grad (u-v)\in\mathcal{H}
:=\set{\ssigma\in\Hdivom}{\div\ssigma=0},
\end{align*}
we have
\begin{align*}
 \bnorm{\grad(u-v)}{\Lt(\Omega)}^2 
 = \bnorm{\grad(u-v)}{\mathcal{H}}^2 
 &= \max_{\substack{\ttau\in\Lt(\Omega) \\ \div \ttau = 0}}
 \Big(2 \,\bscp{\grad(u-v)}{\ttau}{\Lt(\Omega)} 
 - \norm{\ttau}{\Lt(\Omega)}^2 \Big)\\
 &= \max_{\substack{\ttau\in\Lt(\Omega) \\ \div \ttau = 0}}
  \Big( \, 2 \,\dualpga{g-\str{v}{\Gamma}}{\ntr{\ttau}{\Gamma}} 
 - \ \norm{\ttau}{\Lt(\Omega)}^2 \Big).
\end{align*}
In particular, the maximum is attained for $\ttau = \grad(u-v)$.
This concludes the proof.
\end{proof}

To ease the readability, the remainder of this chapter focusses on our numerical setup. For the functional analytic framework in a Sobolev space setting, which might be of independent interest, we refer to 
to the appendix of the extended preprint~\cite{KPPRS2019a} of this work.

\subsection{Computable error bounds}

We aim at error bounds obtained by solving FEM problems on a boundary layer $S \subset \Omega$. 
For the maximization problem in~\eqref{eq:prop:continuousmajmin}, the constraint $\div \ttau = 0$ can be realized by a mixed formulation (see also \cite[Lemma~15]{KPPRS2019a} of the extended preprint of this work).
However, the boundary condition $\str{w}{\Gamma} = g - \str{v}{\Gamma}$ 
cannot be satisfied exactly by any piecewise polynomial solution $w_h$ 
corresponding to~\eqref{eq:prop:continuousmajmin}. Therefore, the upper bound involves an additional oscillation term given by 
a discretisation operator $J_h$, \revision{which will be the $\Lt(\Gamma)$-orthogonal projection in the numerical experiments of Sections~\ref{section:numerics} and~\ref{section:extensions} below.}

\begin{theorem}[Computable bounds via boundary layer]
\label{cor:continuousupperbound}
Let $v\in\Hoom$ with $\Delta v=0$. Let $p\in\N$ and let 
$J_h: \H^{1/2}(\Gamma) \to \SSS^p(\FF_h^{\Gamma})
:=\set{\str{\varphi_h}{\Gamma}}{\varphi_h \in \SSS^p(\TT^S_h)}$
\revision{be an arbitrary projection operator.}
Moreover, let $w_h\in \SSS^p(\TT^S_h)$ be the unique solution of
\begin{align}
\label{eq:upper-bound-discrete}
 \scp{\grad w_h}{\grad\varphi_h}{\Lt(S)} = 0 
 \quad \text{for all }\, \varphi_h\in \SSS^p_{0}(\TT^S_h)
\ \text{ with } \
\str{w_h}{\partial S} = \begin{cases}
J_h(g - \str{v}{\Gamma}) \, & \!\text{on } \Gamma,\\
0 & \!\text{on } \Gamma^{c}.
\end{cases}
\end{align}
For $q\in\N_{0}$, let the pair $(\ttau_h,\omega_h)\in\RTqg (\TT^S_h) \times \PP^{q}(\TT^S_h)$ 
be the unique solution of
\begin{subequations}
\label{eq:mixedproblemstripdiscrete}
\begin{align}
\label{eq:mixedproblemstripdiscrete1}
\scp{\ttau_h}{\ssigma_h}{\Lt(S)} 
+ \scp{\div \ssigma_h}{\omega_h}{\Lt(S)} 
&= \scp{g - \str{v}{\Gamma}}{\ntr{\ssigma_h}{\Gamma}}{\Lt(\Gamma)},\\
\label{eq:mixedproblemstripdiscrete2}
\scp{\div\ttau_h}{\psi_h}{\Lt(S)} &= 0
\end{align}
\end{subequations}
for all pairs $(\ssigma_h,\psi_h)\in\RTqg (\TT^S_h) \times \PP^{q}(\TT^S_h)$.
Then, it holds that
\begin{subequations}
\begin{align}
\label{min:discrete}
2 \,\scp{g - \str{v}{\Gamma}}{\ntr{\ttau_h}{\Gamma}}{\Lt(\Gamma)} - \norm{\ttau_h}{\Lt(S)}^2 
 &\le \norm{\grad(u-v)}{\Lt(\Omega)}^2
 \\& \label{maj:discrete}
 \le \norm{\grad w_h}{\Lt(S)} + \bnorm{(1-J_h) (g-\str{v}{\Gamma})}{\H^{1/2}(\Gamma)}.
\end{align}
\end{subequations}
\end{theorem}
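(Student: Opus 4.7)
The plan is to treat the two inequalities separately, in each case producing an admissible competitor for the corresponding side of the identity in Theorem~\ref{prop:continuousmajmin} by extending the discrete Raviart--Thomas field resp.\ Lagrange function from $S$ to all of $\Omega$ by zero (cf.\ Remark~\ref{rem:extensionbyzero}).

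\textbf{Lower bound.} First I would show that the mixed solution $\ttau_h$ is pointwise divergence-free on $S$. Since $\ttau_h\in\RTq(\TT^S_h)$, we have $\div\ttau_h\in\PP^q(\TT^S_h)$, so plugging $\psi_h:=\div\ttau_h$ into~\eqref{eq:mixedproblemstripdiscrete2} forces $\div\ttau_h=0$ in $S$. Because $\ttau_h\in\RTqg(\TT^S_h)$ has vanishing normal trace on $\Gamma^c$, the zero-extension $\check\ttau_h$ lies in $\Hdivom$ with $\div\check\ttau_h=0$ and $\ntr{\check\ttau_h}{\Gamma}=\ntr{\ttau_h}{\Gamma}$; in particular $\check\ttau_h$ is admissible for the maximisation in~\eqref{eq:continuousmajmin}. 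Since the normal trace is piecewise polynomial, the duality pairing collapses to the $\Lt(\Gamma)$ scalar product, so
\begin{align*}
\mmin(\check\ttau_h;\str{v}{\Gamma},g)
=2\,\scp{g-\str{v}{\Gamma}}{\ntr{\ttau_h}{\Gamma}}{\Lt(\Gamma)}-\norm{\ttau_h}{\Lt(S)}^2,
\end{align*}
and Theorem~\ref{prop:continuousmajmin} delivers~\eqref{min:discrete}. (Note that $\omega_h$ plays no role here; it is only the Lagrange multiplier enforcing the divergence constraint.)

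\textbf{Upper bound.} Set $f:=g-\str{v}{\Gamma}\in\H^{1/2}(\Gamma)$. By Remark~\ref{rem:extensionbyzero}, the zero-extension $\check w_h$ of $w_h$ to $\Omega$ lies in $\Hoom$ with $\str{\check w_h}{\Gamma}=J_hf$. To correct the boundary data from $J_hf$ to $f$, I would add the harmonic extension $\widehat{(1-J_h)f}\in\Hoom$ provided by~\eqref{eq:lifting}. Then $w:=\check w_h+\widehat{(1-J_h)f}$ has trace $f$ on $\Gamma$ and is thus admissible for the minimisation in~\eqref{eq:continuousmajmin}. A triangle inequality in $\Lt(\Omega)$, combined with the norm bound $\norm{\grad\widehat{(1-J_h)f}}{\Lt(\Omega)}\le\norm{(1-J_h)f}{\H^{1/2}(\Gamma)}$ from Remark~\ref{remharmext}, yields
\begin{align*}
\norm{\grad(u-v)}{\Lt(\Omega)}
\le\norm{\grad w}{\Lt(\Omega)}
\le\norm{\grad w_h}{\Lt(S)}+\bnorm{(1-J_h)f}{\H^{1/2}(\Gamma)},
\end{align*}
which is~\eqref{maj:discrete} (after squaring, interpreting the stated right-hand side accordingly). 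The variational problem~\eqref{eq:upper-bound-discrete} only serves to make the first summand as small as possible among all $\SSS^p(\TT^S_h)$-functions with the prescribed discrete trace, but any such function would already produce a valid bound.

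\textbf{Expected obstacle.} The only non-routine points are (i) justifying that the zero-extensions indeed lie in $\Hdivom$ resp.\ $\Hoom$ \emph{globally} on $\Omega$, which is where the boundary conditions on $\Gamma^c$ encoded in $\RTqg$ and $\SSS_0^p$ are used, and (ii) reducing the abstract $\H^{1/2}(\Gamma)\times\H^{-1/2}(\Gamma)$ duality in $\mmin$ to the concrete $\Lt(\Gamma)$ pairing of the statement, which relies on the regularity of the discrete normal trace. Both are conceptual rather than computational.
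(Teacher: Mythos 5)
Your proposal is correct and follows essentially the same route as the paper: the lower bound via $\div\ttau_h\in\PP^q(\TT_h^S)$ forcing $\div\ttau_h=0$ and zero-extension into $\Hdivom$, and the upper bound via the harmonic extension of $(1-J_h)(g-\str{v}{\Gamma})$ combined with the triangle inequality and the bound from Remark~\ref{remharmext}. The only cosmetic difference is that you build the admissible competitor $\check w_h+\widehat{(1-J_h)f}$ directly, whereas the paper first shifts the minimisation problem by $\widehat f_h$ and then inserts the zero-extension of $w_h$; your parenthetical remark about the squaring in~\eqref{maj:discrete} matches the paper's intended reading.
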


\begin{proof}
It is well-known that~\eqref{eq:upper-bound-discrete} admits a unique solution $w_h\in \SSS^p(\TT^S_h)$, being the natural FEM discretization of an homogeneous Dirichlet--Laplace problem with inhomogeneous Dirichlet conditions; see, e.g., \cite{bcd,sacchiveeser,afkpp2013}. 
To prove the upper bound~\eqref{maj:discrete}, let
$\widehat{f}_h\in\Hoom$ be the (unique) harmonic extension of $f_h:=(1-J_h)(g - \str{v}{\Gamma})$;
see Remark~\ref{remharmext}. Then, 
\revision{Theorem~\ref{prop:continuousmajmin} 
and $\norm{\grad\widehat{f}_h}{\Lt(\Omega)}\leq\norm{f_h}{\H^{1/2}(\Gamma)}$ lead to
\begin{align*}
\bnorm{\grad(u-v)}{\Lt(\Omega)}
=\min_{\substack{w\in\Hoom \\ \str{w}{\Gamma} = g - \str{v}{\Gamma}}} 
\norm{\grad w}{\Lt(\Omega)}
&\leq\min_{\substack{w\in\Hoom \\ \str{w}{\Gamma} = g - \str{v}{\Gamma}}} 
\bnorm{\grad(w-\widehat{f}_h)}{\Lt(\Omega)}
+ \norm{\grad\widehat{f}_h}{\Lt(\Omega)} \\
&\leq\min_{\substack{w\in \Hoom \\ \str{w}{\Gamma} = J_h(g - \str{v}{\Gamma})}} 
\norm{\grad w}{\Lt(\Omega)}
+ \norm{f_h}{\H^{1/2}(\Gamma)},
\end{align*}
where we have finally employed the substitution $w-\widehat{f}_h\leadsto w$.
Since the zero-extension of $w_h$ belongs to $\Hoom$ according to Remark~\ref{rem:extensionbyzero}
and satisfies the correct boundary condition,
this proves the computable upper bound~\eqref{maj:discrete}.}

For existence and uniqueness of~\eqref{eq:mixedproblemstripdiscrete}, we refer, e.g., to \cite{boffibrezzifortin,ccb}. 
Since $\div \ttau_h \in \PP^q(\TT_h^S) \subset \Lt(\Omega)$ by definition of $\RTq(\TT_h^S)$, it follows from~\eqref{eq:mixedproblemstripdiscrete2} that $\ttau_h \in \RTq_{\Gamma^c}(\TT_h^S) \subset \H(\div,S)$ with $\div \ttau_h = 0$ in $S$. According to Remark~\ref{rem:extensionbyzero}, the zero-extension of $\ttau_h$ belongs to $\Hdivom$ with $\div \ttau_h = 0$ in $\Omega$. The computable lower bound~\eqref{min:discrete} thus follows from Theorem~\ref{prop:continuousmajmin}.
\end{proof}
%

\def\vec{\boldsymbol}
In order to circumvent the implementation of the constraint $\div \ttau = 0$, it is also an option to reformulate the maximization problem in~\eqref{eq:prop:continuousmajmin} by means of potentials.
While the 3D case involves vector potentials, for 2D such an approach is particularly attractive due to the possible use of scalar potentials. In the following, we thus concentrate on $d = 2$ (and refer, for $d = 3$, to the appendix of the extended preprint~\cite{KPPRS2019a} of this work).
%
To this end, we recall the definitions of the 2D curl operators
\begin{align*}
 \vec\curl \, \varphi
=\begin{bmatrix}
-\partial_2\varphi\\
\partial_1\varphi
 \end{bmatrix}
\quad \text{for } \varphi: \Omega \to \R
\quad \text{resp.} \quad
 \curl \, \boldsymbol\varphi = \partial_1 \boldsymbol\varphi_2 - \partial_2 \boldsymbol\varphi_1
\quad \text{for } \boldsymbol\varphi: \Omega \to \R^2.
\end{align*}
Note that $\div\vec\curl \, \varphi = 0.$
For $\varphi\in\Hoom$, we thus have $\vec\curl \, \varphi\in\Hdivom$ so that the Neumann trace
\revision{$\ntr{\vec\curl\,\varphi}{\Gamma}\in\H^{-1/2}(\Gamma)$ is well-defined. In particular, we have
\begin{align*}
\scp{\grad\varphi}{\grad\psi}{\Lt(\Omega)}
=\scp{\vec\curl\varphi}{\vec\curl\psi}{\Lt(\Omega)}
\quad \text{for all }\varphi,\psi\in\Hoom.
\end{align*}}%

\begin{corollary}[Computable lower bound via boundary layer --- $\H^1$-conforming]
\label{cor:continuouslowerboundtwo}
Suppose that $d=2$. Let $v\in\Hoom$ with $\Delta v=0$. 
For $p \in \N$, let $\widetilde w_h\in\SSS^p_{\Gamma^{c}}(\TT_h^S)$ be the unique solution of
\begin{align}
\label{eq:lower-bound2ddiscrete}
\scp{\grad \widetilde w_h}{\grad\varphi_h}{\Lt(S)}
=\scp{g-\str{v}{\Gamma}}{\ntr{\vec\curl \, \varphi_h}{\Gamma}}{\Lt(\Gamma)}
\quad\text{for all }\varphi_h\in\SSS^p_{\Gamma^{c}}(\TT^S_h).
\end{align}
Then, it holds that
\begin{align}
\label{eq2:lower-bound2ddiscrete}
2\scp{g-\str{v}{\Gamma}}{\ntr{\vec\curl \, \widetilde w_h}{\Gamma}}{\Lt(\Gamma)}
-\norm{\grad \widetilde w_h}{\Lt(S)}^2
\leq\bnorm{\grad(u-v)}{\Lt(\Omega)}^2.
\end{align}
\end{corollary}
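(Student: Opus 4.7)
The plan is to apply Theorem~\ref{prop:continuousmajmin} with a carefully chosen admissible test field $\ttau$ built from the discrete scalar potential $\widetilde w_h$. The natural 2D choice is $\ttau := \vec\curl \check{\widetilde w}_h$, where $\check{\widetilde w}_h$ denotes the zero-extension of $\widetilde w_h$ to all of $\Omega$. The inequality~\eqref{eq2:lower-bound2ddiscrete} would then follow simply by plugging this $\ttau$ into the maximization principle of Theorem~\ref{prop:continuousmajmin} and simplifying.

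First I would verify admissibility. Since $\widetilde w_h \in \SSS^p_{\Gamma^c}(\TT_h^S)$ vanishes on $\Gamma^c = \partial S \setminus \Gamma$, Remark~\ref{rem:extensionbyzero} ensures $\check{\widetilde w}_h \in \Hoom$. Consequently $\ttau = \vec\curl \check{\widetilde w}_h \in \Lt(\Omega)$, and the distributional identity $\div \vec\curl \varphi = 0$ for $\varphi \in \Hoom$ gives $\ttau \in \Hdivzom$, so $\ttau$ is admissible in~\eqref{eq:continuousmajmin}. Next, I would compute $\mmin(\ttau;\str{v}{\Gamma},g)$ explicitly: since $\check{\widetilde w}_h$ vanishes on $\Omega \setminus S$, one has $\norm{\ttau}{\Lt(\Omega)}^2 = \norm{\vec\curl \widetilde w_h}{\Lt(S)}^2 = \norm{\grad \widetilde w_h}{\Lt(S)}^2$, using the pointwise identity $|\vec\curl \varphi|^2 = |\grad \varphi|^2$ already recorded before the statement. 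The piecewise polynomial regularity of $\widetilde w_h$ means that $\ntr{\vec\curl \widetilde w_h}{\Gamma}$ lies in $\Lt(\Gamma)$, so the duality pairing collapses to the $\Lt(\Gamma)$ inner product appearing in~\eqref{eq2:lower-bound2ddiscrete}. Substituting these into Theorem~\ref{prop:continuousmajmin} produces exactly the claimed bound.

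Existence and uniqueness of $\widetilde w_h$ solving~\eqref{eq:lower-bound2ddiscrete} follow from the Lax--Milgram lemma once one observes that the symmetric bilinear form $\scp{\grad\cdot}{\grad\cdot}{\Lt(S)}$ is coercive on $\SSS^p_{\Gamma^c}(\TT_h^S)$ via Friedrichs' inequality (the vanishing trace on $\Gamma^c$ has positive surface measure by the definition of a boundary layer). It is also worth remarking that~\eqref{eq:lower-bound2ddiscrete} is precisely the Euler--Lagrange equation for the maximization of $\varphi_h \mapsto \mmin(\vec\curl \check{\varphi}_h;\str{v}{\Gamma},g)$ over $\varphi_h \in \SSS^p_{\Gamma^c}(\TT_h^S)$; this explains why the specific solution of~\eqref{eq:lower-bound2ddiscrete}, rather than any element of that subspace, produces the sharpest lower bound computable within this $\Ho$-conforming discretization. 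The main obstacle I anticipate is not analytical but notational: one must track that the extension by zero commutes with $\vec\curl$ in the sense that no spurious distributional contribution appears along the interface $\partial S \cap \Omega$, which is again exactly the content of Remark~\ref{rem:extensionbyzero} applied to $\widetilde w_h$.
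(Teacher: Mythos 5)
Your proposal is correct and follows essentially the same route as the paper: extend $\widetilde w_h$ by zero via Remark~\ref{rem:extensionbyzero}, observe that $\ttau := \vec\curl\,\widetilde w_h$ is an admissible divergence-free field with $\norm{\ttau}{\Lt(\Omega)} = \norm{\grad\widetilde w_h}{\Lt(S)}$, and insert it into the maximization identity of Theorem~\ref{prop:continuousmajmin}. The additional observations (Lax--Milgram for well-posedness, the Euler--Lagrange interpretation of~\eqref{eq:lower-bound2ddiscrete}) are consistent with, though more explicit than, the paper's brief argument.
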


\begin{proof}
It is well-known that~\eqref{eq:lower-bound2ddiscrete} admits a unique solution $\widetilde w_h \in\SSS^p_{\Gamma^{c}}(\TT_h^S)$ being the natural FEM discretization 
of a mixed Dirichlet--Neumann--Laplace problem; see, e.g.,~\cite{bcd}. According to Remark~\ref{rem:extensionbyzero}, the zero-extension of $\widetilde w_h$ belongs to $\Hoom$ and hence $\widetilde\ttau_h:=\vec\curl \, \widetilde w_h \in\Hdivom$ satisfies that $\div\widetilde\ttau_h=0$ with $\ntr{\vec\curl \, \widetilde w_h}{\Gamma} = \ntr{\widetilde\ttau_h}{\Gamma}$ and $\norm{\widetilde\ttau_h}{\Lt(\Omega)} = \norm{\vec\curl \, \widetilde w_h}{\Lt(\Omega)} = \norm{\nabla \widetilde w_h}{\Lt(\Omega)}$. The claim thus follows from Theorem~\ref{prop:continuousmajmin}.
\end{proof}

\section{Adaptive algorithm}
\label{section:algorithm}

\begin{figure}[t]
\includegraphics[trim={4.1cm 1.1cm 3.4cm 1cm},clip,width=.22\textwidth]{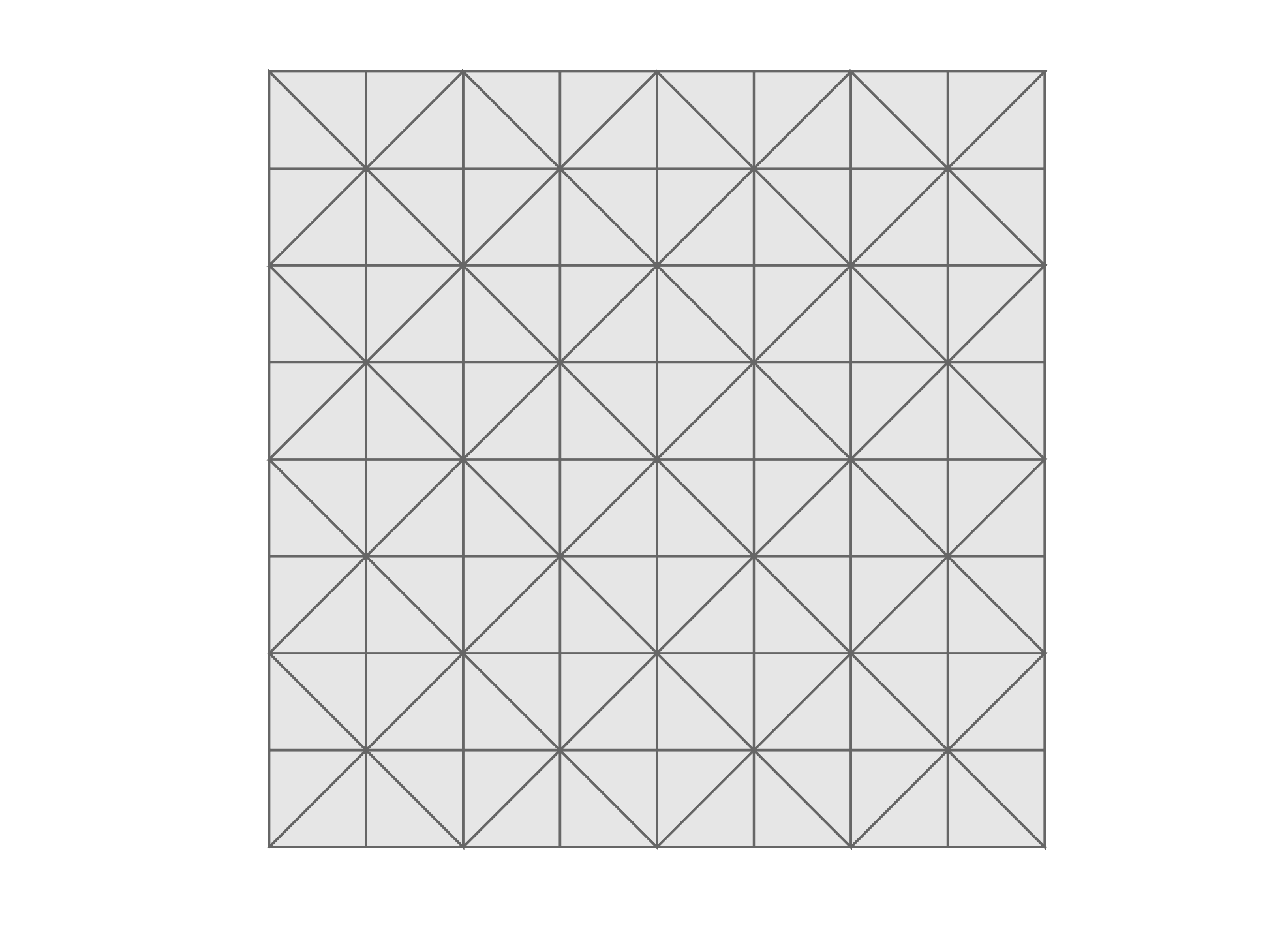}\hfill
\includegraphics[trim={4.1cm 1.1cm 3.4cm 1cm},clip,width=.22\textwidth]{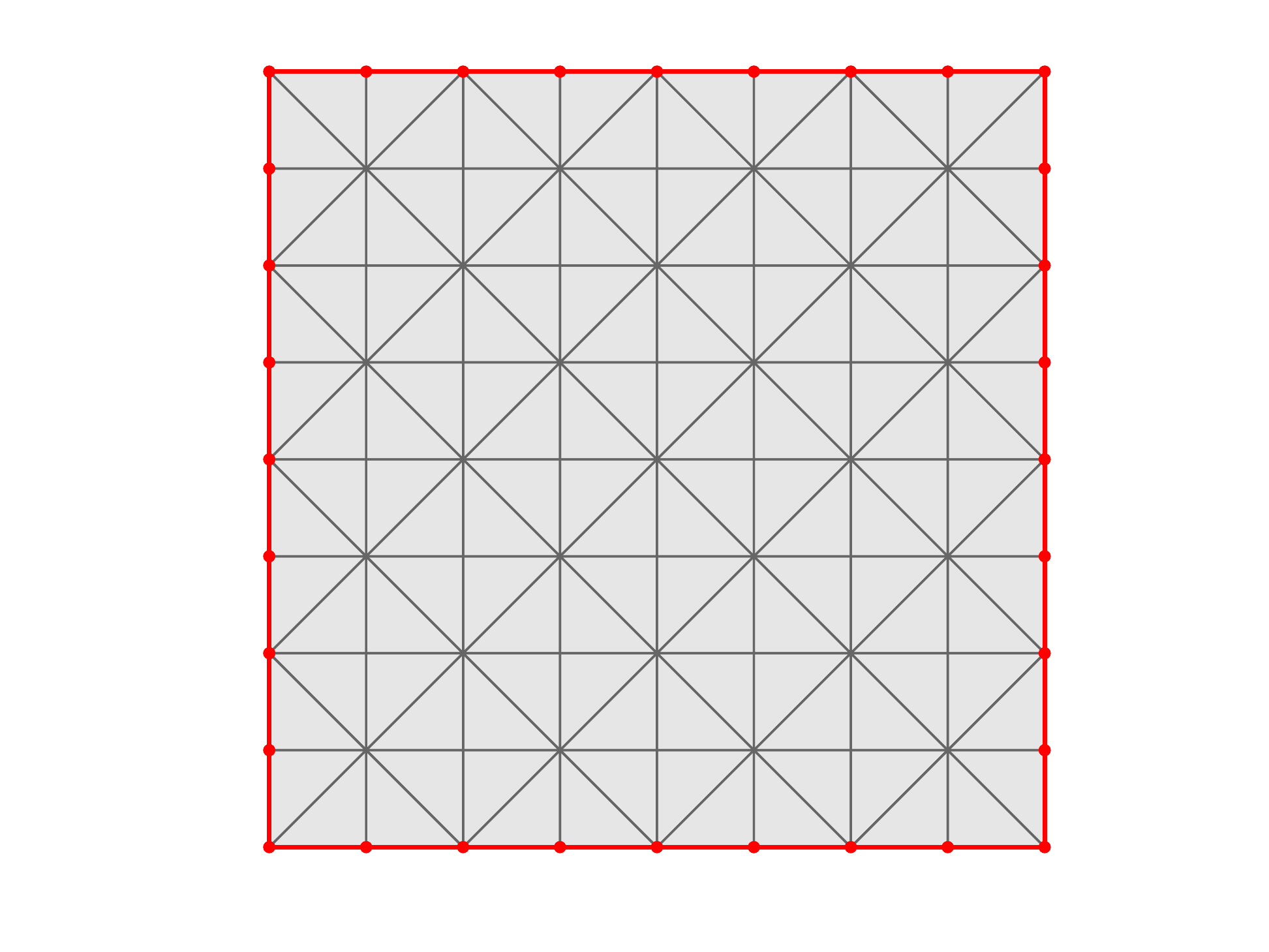}\hfill
\includegraphics[trim={4.1cm 1.1cm 3.4cm 1cm},clip,width=.22\textwidth]{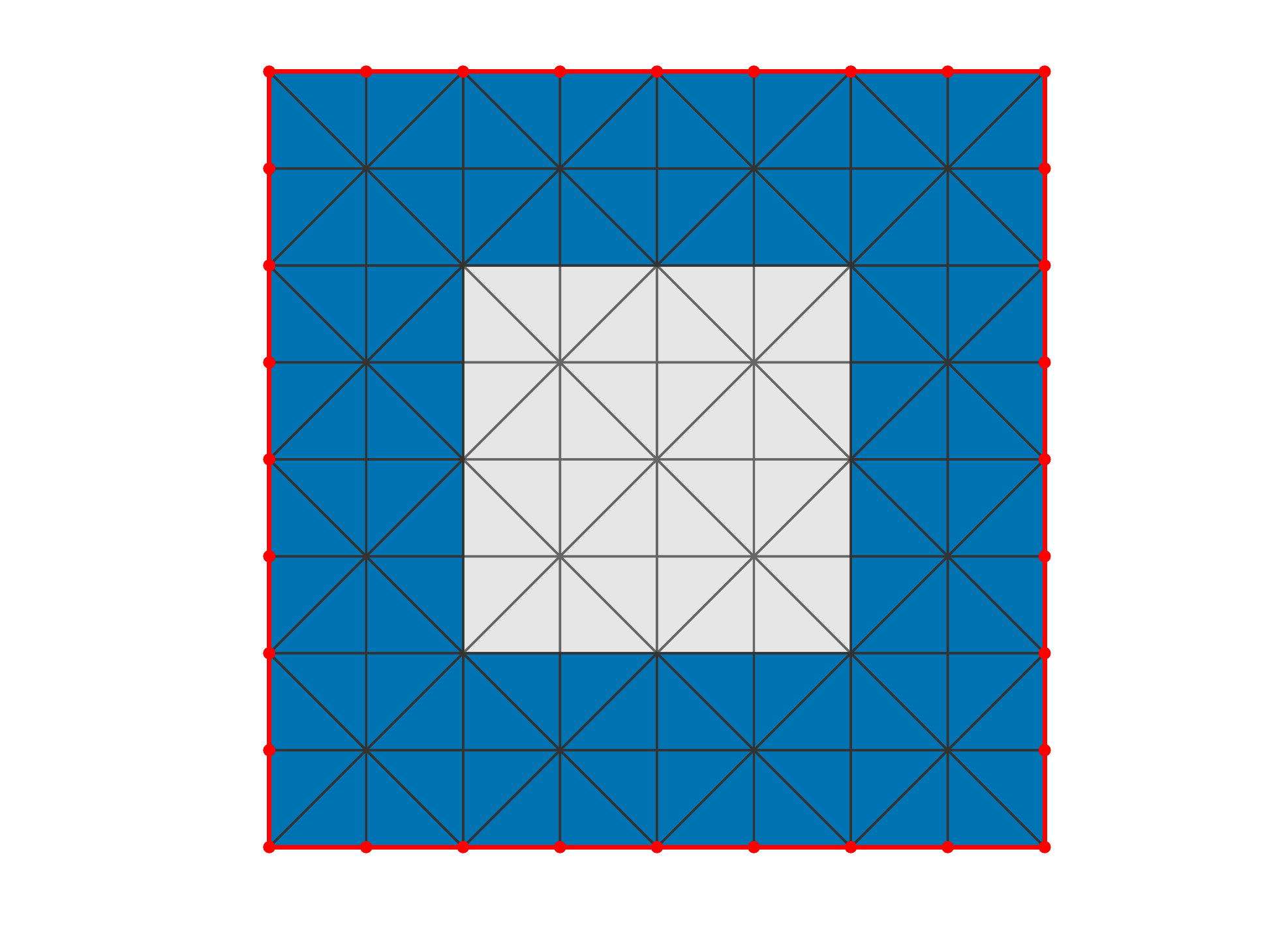}\hfill
\includegraphics[trim={4.1cm 1.1cm 3.4cm 1cm},clip,width=.22\textwidth]{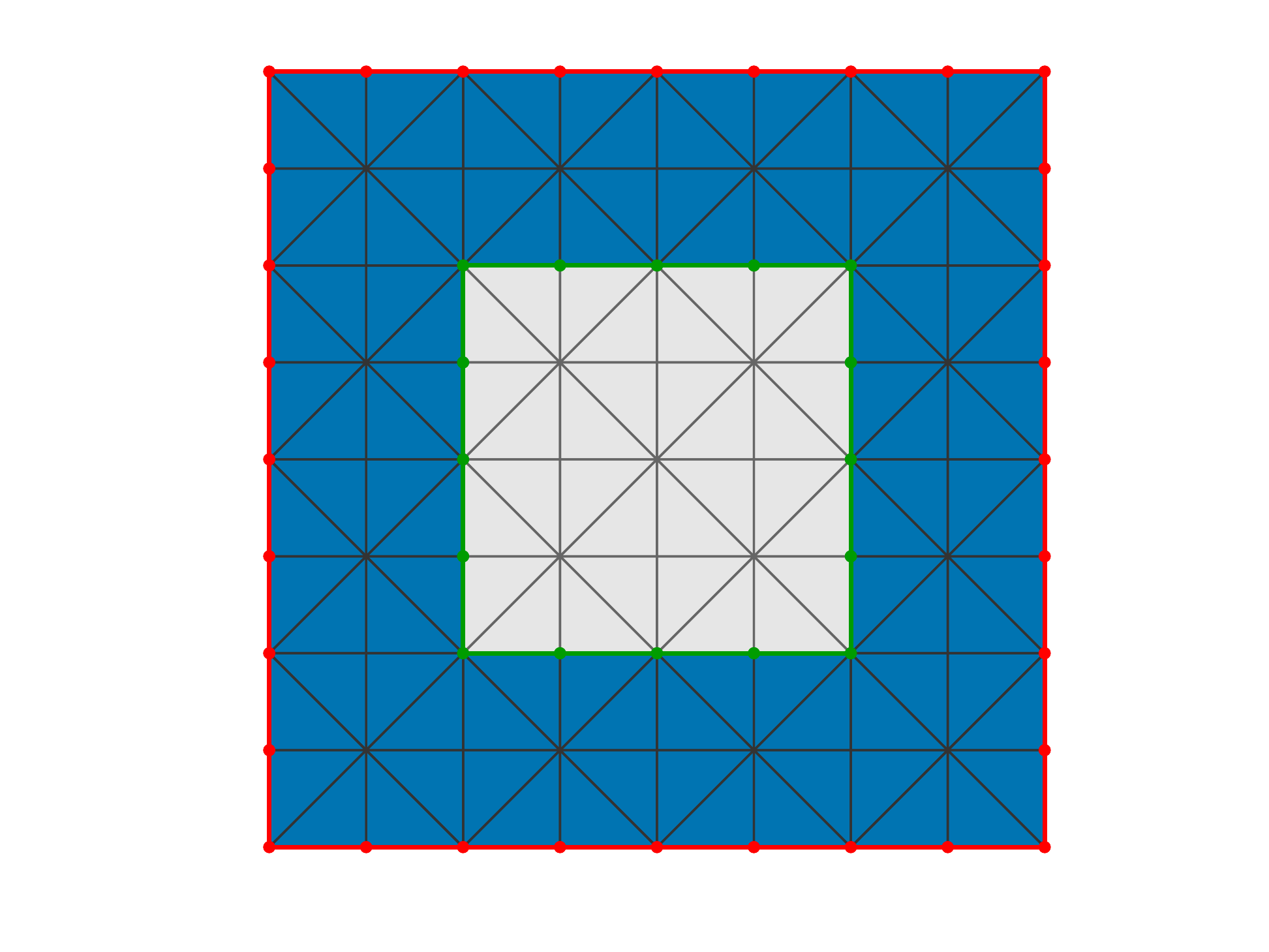}
\captionsetup{width=1.015\textwidth}
\caption{Example geometry $\Omega = (0,1/2)^2$ with FEM triangulation $\TT_h$ (gray, left), induced BEM mesh $\cred\boldsymbol{\FF^\Gamma_h}$ on $\cred\boldsymbol{\Gamma = \partial\Omega}$ {\cred\bf(red)}, generated boundary layer $\color{tuwBlue}\boldsymbol{S}$ with mesh $\color{tuwBlue}\boldsymbol{\TT_h^S}$ {\color{tuwBlue}\bf(blue)}, and interior boundary $\color{green!50!black}\boldsymbol{\Gamma^c}$ {\color{green!50!black}\bf(green)}, illustrated from left to right.}\label{fig:extractmesh}
\end{figure}
%
\subsection{Triangulations and mesh-refinement}

In our numerical experiments, we start from a conforming simplicial triangulation $\TT_h$ such that $\Gamma \subset \bigcup_{T \in \TT_h}T \subseteq \overline\Omega$. We obtain the boundary layer $S \subset \Omega$ as the second-order patch of $\Gamma$ with respect to $\TT_h$, i.e.,
\begin{align}\label{eq:mesh:layer}
 \vspace*{-1mm}\TT_h^S := \set{T\in\TT_h}{\exists \, T'\in\TT_h, \quad T' \cap \Gamma \neq \emptyset \neq T \cap T'}
 \text{ \ and \ } S := {\rm interior}\Big( \bigcup_{T\in\TT_h^S} T \Big).
\end{align}
Moreover, recall the BEM mesh $\FF_h^\Gamma := \TT_h^S|_\Gamma = \TT_h|_\Gamma $ from~\eqref{eq:boundary-mesh}. These definitions are illustrated in Figure~\ref{fig:extractmesh}. 

For (local) mesh-refinement, we employ newest \revision{2D} vertex bisection~\cite{stevenson2008,kpp2013}; \revision{see also~\cite[Section~5.2]{p1afem} for a short but precise statement of the algorithm and the MATLAB implementation we build on.}
The adaptive strategy will only mark elements of $\TT_h^S$, but refinement will be done with respect to the full triangulation $\TT_h$. In particular, we stress that the second-order patch $S$ will generically change, if the triangulation $\TT_h$ is refined; see, e.g., Figure~\ref{fig:example1:mesh}. In this way, we guarantee that the number of degrees of freedom with respect to $\TT_h^S$ will increase proportionally to those with respect to $\FF^\Gamma_h$; see also Tables~\ref{table2}--\ref{table5} below, \revision{where $\#\FF^\Gamma_h$ denotes the number of BEM elements, while $\#\TT_h^S$ denotes the number of FEM elements in the boundary layer $S \subset \Omega$.}

\subsection{Data oscillations}\label{sec:oscillations}

The upper bound~\eqref{maj:discrete}
in 
Theorem~\ref{cor:continuousupperbound}
involves the data approximation term 
$\bnorm{(1-J_h) (g-\str{u_h}{\Gamma})}{\H^{1/2}(\Gamma)}$, where $u_h:= \widetilde V \phi_h$. 
Besides the fact that we still have to specify the operator 
$J_h: \H^{1/2}(\Gamma) \to \SSS^p(\FF_h^\Gamma)$
\revision{from Theorem \ref{cor:continuousupperbound}},
we note that the nonlocal nature of the $\H^{1/2}(\Gamma)$-norm makes this term hardly computable.

In the following, we choose 
\revision{
\begin{subequations}\label{Jhpdef}
\begin{align}
J_h: \Lt(\Gamma) \to \SSS^p(\FF^{\Gamma}_h)
= \set{\str{\varphi_h}{\Gamma}}{\varphi_h\in\SSS^p(\TT_h)},
\end{align}
as the $\Lt(\Gamma)$-orthogonal projection onto 
$\SSS^p(\FF^{\Gamma}_h)$,
which is uniquely determined by
\revision{
\begin{align}
 \scp{J_h\varphi}{\psi_h}{\Lt(\Gamma)}
 = \scp{\varphi}{\psi_h}{\Lt(\Gamma)}
 \quad \text{for all } \varphi \in \Lt(\Gamma) \text{ and all } \psi_h \in \SSS^p(\FF_h^\Gamma).
\end{align}}
\end{subequations}}%
For $d = 2$, it follows under mild conditions on 
$\FF^{\Gamma}_h$ that $J_h$ is $\Ho(\Gamma)$-stable, i.e.,
\begin{align}
\label{eq:H1-stable}
\norm{\grad J_hf}{\Lt(\Gamma)} 
\leq\Cstab\norm{\grad f}{\Lt(\Gamma)}
\quad\text{for all }f\in\Ho(\Gamma);
\end{align}
see~\cite{ct1987}. We note that these conditions are automatically satisfied for $\FF^{\Gamma}_h = \TT_h|_{\Gamma}$, 
since $\TT_h$ is only refined by newest vertex bisection. For $d = 3$, the $\Ho(\Gamma)$-stability~\eqref{eq:H1-stable} 
is known for low-order FEM (on the 2D manifold $\Gamma$); see~\cite{kpp2013} for $p = 1$ and~\cite{ghs2016} for $p \in \{1, \dots, 12\}$. We recall the following result from~\cite{affkp2015}:

\begin{lemma}
\label{lemma:osc}
\revision{If the $\Lt(\Gamma)$-orthogonal projection $J_h: \Lt(\Gamma) \to \SSS^p(\FF_h^\Gamma)$ from~\eqref{Jhpdef}}
is $\Ho(\Gamma)$-stable~\eqref{eq:H1-stable}, then it holds for all $f\in\Ho(\Gamma)$ that
\begin{align*}
\Cosc^{-1}\bnorm{(1-J_h)f}{\H^{1/2}(\Gamma)}
\leq\min_{f_h\in\SSS^p(\FF_h^\Gamma)} 
\bnorm{f-f_h}{\H^{1/2}(\Gamma)}
\leq\Cosc\min_{f_h\in\SSS^p(\FF_h^\Gamma)} 
\bnorm{h^{1/2}\grad_{\Gamma}(f-f_h)}{\Lt(\Gamma)},
\end{align*}
where $h\in\Linfty(\Omega)$ is the local mesh-width function defined by 
$h|_{F} := \diam(F)$ 
for all $F\in\FF^{\Gamma}_h$. 
The constant $\Cosc > 0$ depends only on $\Cstab$ and the shape regularity of $\TT_h$.
\end{lemma}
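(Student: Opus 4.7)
My plan is to prove the two inequalities separately: the left inequality relies on $\H^{1/2}(\Gamma)$-stability of the projector $J_h$ obtained via operator interpolation, while the right inequality is a localisation estimate coming from a quasi-interpolation operator combined with a K-functional argument.

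\textbf{Left inequality.} First, I would establish that $J_h$ is $\H^{1/2}(\Gamma)$-stable. The $\Lt(\Gamma)$-orthogonality gives $\norm{J_h f}{\Lt(\Gamma)} \leq \norm{f}{\Lt(\Gamma)}$ with constant~$1$. Combined with the assumed $\Ho(\Gamma)$-stability~\eqref{eq:H1-stable}, operator interpolation (with $[\Lt(\Gamma),\Ho(\Gamma)]_{1/2} = \H^{1/2}(\Gamma)$ up to an equivalent norm) yields a constant $C_{1/2}$ depending only on $\Cstab$ such that $\norm{J_h f}{\H^{1/2}(\Gamma)} \leq C_{1/2} \norm{f}{\H^{1/2}(\Gamma)}$. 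Since $J_h$ is a projection, $(1-J_h)f_h = 0$ for all $f_h \in \SSS^p(\FF_h^\Gamma)$, so $(1-J_h)f = (1-J_h)(f - f_h)$ for any such $f_h$. Taking norms and infimising over $f_h$ gives the left inequality with a constant depending only on $\Cstab$.

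\textbf{Right inequality.} For the upper estimate by $h^{1/2}$-weighted gradients, the natural choice of $f_h$ is a Scott--Zhang type quasi-interpolant $I_h f \in \SSS^p(\FF_h^\Gamma)$, which is locally defined, preserves piecewise polynomials of degree $p$, and satisfies the patchwise estimates
\begin{align*}
\norm{f - I_h f}{\Lt(F)} \leq C\, h_F \norm{\grad_\Gamma f}{\Lt(\omega_F)},
\qquad
\norm{\grad_\Gamma(f - I_h f)}{\Lt(F)} \leq C \norm{\grad_\Gamma f}{\Lt(\omega_F)},
\end{align*}
where $\omega_F$ denotes the patch around $F \in \FF_h^\Gamma$. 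Using the equivalence of the $\H^{1/2}(\Gamma)$-norm with the K-functional $K(t,f) = \inf_{g \in \Ho(\Gamma)} (\norm{f-g}{\Lt(\Gamma)}^2 + t^2 \norm{g}{\Ho(\Gamma)}^2)^{1/2}$, one obtains the localised interpolation bound
\begin{align*}
\bnorm{f - I_h f}{\H^{1/2}(\Gamma)}^2 \leq C \sum_{F \in \FF_h^\Gamma} h_F \norm{\grad_\Gamma(f - I_h f)}{\Lt(\omega_F)}^2.
\end{align*}
Replacing $I_h f$ on the right by an arbitrary $f_h \in \SSS^p(\FF_h^\Gamma)$ (which is allowed because $I_h$ reproduces $f_h$) and using shape regularity to absorb patch overlaps yields the second inequality after minimisation.

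\textbf{Main obstacle.} The subtle point is the localisation estimate for the fractional norm, which is genuinely non-trivial because the $\H^{1/2}(\Gamma)$-norm is non-local. The clean way is to invoke the characterisation of $\H^{1/2}(\Gamma)$ via the K-functional together with the Scott--Zhang estimates above; alternatively, one can use the Slobodeckij double-integral seminorm and split it into diagonal and off-diagonal contributions, estimating the near-diagonal part by the $\Lt$ and $H^1$ local bounds and the far-diagonal part by the $\Lt$ bound alone. Either route is essentially the content of \cite{affkp2015}, and I would simply cite it rather than redo the technical estimates; the constant $\Cosc$ then inherits dependence on $\Cstab$ and the shape-regularity constant of $\TT_h$.
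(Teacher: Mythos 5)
The paper does not prove this lemma at all: it is explicitly \emph{recalled} from the cited reference \cite{affkp2015}, so there is no in-paper argument to compare against. Your sketch reconstructs the standard proof from that literature, and both halves are sound. For the left inequality, the combination of the trivial $\Lt(\Gamma)$-bound (constant $1$ for the orthogonal projection) with the assumed $\Ho(\Gamma)$-stability, followed by operator interpolation and the projection identity $(1-J_h)f=(1-J_h)(f-f_h)$, is exactly the right mechanism; the only caveat is that the constant you call $C_{1/2}$ also absorbs the norm-equivalence between $[\Lt(\Gamma),\Ho(\Gamma)]_{1/2}$ and the quotient norm on $\H^{1/2}(\Gamma)$, which depends on $\Gamma$ — the lemma's statement suppresses this dependence as well, so this is consistent rather than a gap. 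For the right inequality, be aware that the displayed localisation bound
$\bnorm{f - I_h f}{\H^{1/2}(\Gamma)}^2 \leq C \sum_{F} h_F \norm{\grad_\Gamma(f - I_h f)}{\Lt(\omega_F)}^2$
is \emph{not} a generic property of the fractional norm: Faermann-type localisation produces an additional term $\sum_F h_F^{-1}\norm{f-I_hf}{\Lt(F)}^2$, and one needs the first-order approximation property of the Scott--Zhang operator to absorb it into the gradient term. You do acknowledge this via the K-functional/Slobodeckij discussion and ultimately defer the technical step to \cite{affkp2015} — which is precisely what the paper does for the entire lemma — so your outline is an acceptable and faithful expansion of the citation rather than a competing proof.
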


Provided that the given Dirichlet boundary data satisfy $g\in\Ho(\Gamma)$, 
the foregoing lemma allows to dominate the data approximation term by
\begin{align}
\label{eq:osc}
\Cosc^{-2}
 \bnorm{(1-J_h) (g-\str{u_h}{\Gamma})}{\H^{1/2}(\Gamma)}
 \leq \bnorm{h^{1/2}\grad_{\Gamma} ((1-J_h)(g-\str{u_h}{\Gamma}))}{\Lt(\Gamma)}
 =:{\rm osc}_h,
\end{align}
where ${\rm osc}_h$ is, in fact, computable, 
while the constant $\Cosc$ is generic and hardly accessible. 
With $v=u_h$, the upper bound~\eqref{maj:discrete} becomes 
\begin{align}
\label{eq2:maj:discrete}
\begin{split}
\bnorm{\grad (u-u_h)}{\Lt(\Omega)} 
 &\le \norm{\grad w_h}{\Lt(S)} 
 + \bnorm{(1-J_h) (g-\str{u_h}{\Gamma})}{\H^{1/2}(\Gamma)} \\
 &\le \norm{\grad  w_h}{\Lt(S)} 
 + \Cosc^2 \, {\rm osc}_h,
\end{split}
\end{align}
where $w_h\in \SSS^p(\TT^S_h)$ solves~\eqref{eq:upper-bound-discrete}. For the use in the adaptive algorithm, we note that
\begin{align}\label{eq:def:osc}
 {\rm osc}_h^2 = \! \sum_{T \in \TT_h^S}\! {\rm osc}_h(T)^2,
 \text{ where }
 {\rm osc}_h(T)^2 := \!\! \sum_{\substack{F \in \FF_h^\Gamma \\ F \subset T}} \! \diam(F) \, \norm{\grad_{\Gamma} ((1 \!-\! J_h) (g \!-\! \str{u_h}{\Gamma}))}{\Lt(F)}^2. 
\end{align}

\begin{remark}
In our numerical experiments, we will consider $p = 1$ as well as $p = 2$ to compute the uppermost bound in~\eqref{eq2:maj:discrete}.
Since the lower bound~\eqref{min:discrete} is independent of the data approximation, we did only implement the lowest-order case $q = 0$.
\qed
\end{remark}

\begin{remark}\label{remark:Jh}
Instead of the $\Lt(\Gamma)$-orthogonal projection, one can also employ the Scott--Zhang projector; see~\cite{afkpp2013,ffkmp2014}. 
Then, Lemma~\ref{lemma:osc} as well as~\eqref{eq:osc} hold accordingly. 
For $d = 2$, one can also employ nodal projection. 
While generic $\H^{1/2}(\Gamma)$ functions do not have to be
continuous and Lemma~\ref{lemma:osc} fails, 
one can still prove~\eqref{eq:osc}; see~\cite{fpp2014,ffkmp2014}.
\qed
\end{remark}

\subsection{Adaptive algorithm}


\revision{The above discussed estimates and relations yield the following adaptive algorithm, whose performance is verified in a series of numerical tests presented in the next section.}

\begin{algorithm}
\label{algorithm}
Let $p\in\N$ and
let $0 < \theta \le 1$ be a fixed marking parameter. 
Let $\TT_h$ be a conforming initial triangulation of $\Omega$. 
Let $\varepsilon > 0$ be the tolerance for the energy error 
$\bnorm{\grad (u-u_h)}{\Lt(\Omega)}$ with $u_h= \widetilde V \phi_h$. 
Then, perform the following steps~\ref{itm:initbemmesh}  --~\ref{itm:refine}:
\renewcommand\labelenumi{\emph{(\roman{enumi})}}
\renewcommand\theenumi\labelenumi
\begin{enumerate}
\item \label{itm:initbemmesh} 
Extract the BEM triangulation $\FF^{\Gamma}_h = \TT_h|_{\Gamma}$ from~\eqref{eq:boundary-mesh}.
\item 
Extract the patch $S \subset \Omega$ of $\Gamma$ and the corresponding triangulation $\TT_h^S$ from~\eqref{eq:mesh:layer}.
\item \label{itm:solvebem} 
Compute the BEM solution $\phi_h\in \PP^0(\FF^{\Gamma}_h)$ of~\eqref{eq:discreteweakform}.
\item
Compute $J_h(g - \str{u_h}{\Gamma})$ together with its oscillations ${\rm osc}_h(T)$ of~\eqref{eq:def:osc} for all $T \in \TT_h$.
\item
Compute the FEM solution $w_h\in \SSS^p(\TT^S_h)$ of~\eqref{eq:upper-bound-discrete} for the majorant~\eqref{maj:discrete}.
\item
Compute the error indicators
\begin{align}
\label{eq:estimator}
\eta_h(T) &= \begin{cases}
 \displaystyle
  \norm{\grad w_h}{\Lt(T)} 
  & \text{for } T\in\TT_h^S,\\
  0 & \text{for } T\in\TT_h\setminus \TT_h^S.
\end{cases}
\end{align}
\item \label{itm:stoppingcriterion}
If $\mmax(\grad w_h) 
= \sum_{T\in\TT_h^S} \eta_h(T)^2 \le \varepsilon^2$, then {\ttfamily break}.
\item \label{itm:marking} 
Otherwise, determine a set $\MM_h \subseteq \TT_h^S$ of minimal cardinality such that
\begin{align}
\label{eq:doerfler}
\theta \sum_{T\in\TT_h^S} \big[ \eta_h(T)^2 + {\rm osc}_h(T)^2 \big] 
&\le \sum_{T\in\MM_h} \big[ \eta_h(T)^2 + {\rm osc}_h(T)^2 \big].
\end{align}
\item \label{itm:refine} 
Refine (at least) all $T \in \MM_h \subseteq \TT_h$ by newest vertex bisection to obtain a new triangulation $\TT_h$.
\end{enumerate}
\end{algorithm}

\begin{remark}[evaluation of $\boldsymbol{u_h = \widetilde V \phi_h}$]
\revision{A subtle point in our approach is the computation of $J_h(g - u_h|_\Gamma)$ to solve the auxiliary FEM problem~\eqref{eq:upper-bound-discrete} and the computation of ${\rm osc}_h$ from~\eqref{eq:osc} to compute the upper bound~\eqref{eq2:maj:discrete} in Theorem~\ref{cor:continuousupperbound}. Similarly, the solution of the auxiliary FEM problem~\eqref{eq:mixedproblemstripdiscrete} and the computation of the lower bound~\eqref{min:discrete} require to compute $\scp{g - \str{u_h}{\Gamma}}{\ntr{\ssigma_h}{\Gamma}}{\Lt(\Gamma)}$ for the basis functions $\ssigma_h \in \RTqg (\TT^S_h)$ (and analogous comments apply for the alternative lower bound~\eqref{eq:lower-bound2ddiscrete}--\eqref{eq2:lower-bound2ddiscrete} from Corollary~\ref{cor:continuouslowerboundtwo}). All of this is subtle, since $u_h = \widetilde V \phi_h$ is not a discrete function (but data sparse, since $\phi_h$ is discrete). At these points, our implementation follows the approach of~\cite{hilbert}, which can briefly be sketched as follows:} 

\revision{{\rm(i)} Note that, due to the mapping properties of the single-layer potential, $u_h = \widetilde V\phi_h$ is continuous (since $\phi_h \in \mathsf{L}^\infty(\Gamma)$), and that, at least for affine boundaries in 2D and piecewise polynomial $\phi_h$, closed formulae for point evaluations $u_h(x) = \widetilde V\phi_h(x)$ at arbitrary $x \in \R^2$ are known.}

\revision{{\rm(ii)} To compute $J_h(g - u_h|_\Gamma) \in \SSS^p(\FF^{\Gamma}_h)$, we approximate $g - u_h|_\Gamma \approx q \in \PP^{p'}(\FF^{\Gamma}_h)$ by a $\FF^{\Gamma}_h$-piecewise interpolation polynomial $q$ of degree $p' > p$. Replacing $g - u_h|_\Gamma \approx q$ in~\eqref{Jhpdef} so that all arising integrals can be computed exactly (by quadrature), we approximate $J_h(g - u_h|_\Gamma) \approx J_h q$. Moreover, we approximate the local contributions of ${\rm osc}_h$ from~\eqref{eq:def:osc} via $\norm{\grad_{\Gamma} ((1 - J_h) (g \!-\! \str{u_h}{\Gamma}))}{\Lt(F)}^2 \approx \norm{\grad_{\Gamma} (q - J_h q)}{\Lt(F)}^2$, where again the right-hand side can be computed exactly by means of quadrature which only relies on point evaluations of $q(x)$ (and hence $g - u_h|_\Gamma$).}

\revision{{\rm(iii)} It is an empirical observation in~\cite{hilbert} that $p' := p + 1$ is sufficiently accurate. If $g - u_h|_\Gamma$ is smooth, one can even show that the quadrature error is of higher order. Moreover, one can optimize the interpolation nodes (per element) and the quadrature nodes (to compute the approximate integrals in~\eqref{Jhpdef} and~\eqref{eq:osc}) to minimize the number of (expensive) point evaluations of $u_h = \widetilde V \phi_h$.}

\revision{{\rm(iv)} Similar ideas must also be used for any BEM error estimator which involves the residual (see, e.g.,~\cite{hilbert}). By means of matrix compression techniques like planel clustering or $\mathcal{H}$-matrices (see, e.g.,~\cite{MR3445676} and the references therein), one can even lower the cost of the point evaluations of $u_h = \widetilde V \phi_h$. However, this is not exploited by our current implementation.}

\revision{{\rm(v)} Analogous ideas are used to compute the lower error bounds of Theorem~\ref{cor:continuousupperbound} resp.\ Corollary~\ref{cor:continuouslowerboundtwo}.}\qed
\end{remark}

\begin{remark}[comments on the minorant]
\label{rem:placementofminorant}
\revision{{\rm(i)} 
We stress that a reliable adaptive algorithm requires 
only a computable upper error bound.
From that perspective, the minorant should be viewed as an option for eventual practical applications, 
which might be computed in one final step,
i.e., after having achieved
the error tolerance by the stopping criterion of Algorithm~\ref{algorithm}{\rm(vii)}.
If desired, as in~\cite{MAsebastian}, the minorant can then be improved by solving~\eqref{eq:mixedproblemstripdiscrete} 
or~\eqref{eq:lower-bound2ddiscrete} adaptively (by only a few post-processing steps), 
while $u_h$ is fixed and the additional mesh refinement of $\TT_h^S$ (resp. $\TT_h)$ 
is only steered by the minorant.
}

\revision{{\rm(ii)} 
Another option is to include the computation of the lower bound into each step of the
algorithm to provide guaranteed intervals for the error: For instance, computing the FEM solution 
$(\ttau_h,\phi_h)\in
\RTzg(\TT_h^S) \times \PP^{0}(\TT^S_h)$ of~\eqref{eq:mixedproblemstripdiscrete}, we can also assemble 
the discrete minorant 
\begin{align}
\label{eq:experiments:minorant}
\mmin_h(\ttau_h;\str{u_h}{\Gamma},g) 
&= 2 \, \scp{g - \str{u_h}{\Gamma}}{\normal\cdot\ttau_h|_{\Gamma}}{\Lt(\Gamma)}
- \ \norm{\ttau_h}{\Lt(S)}^2
\end{align}
from Theorem~\ref{cor:continuousupperbound}.
Then again, at least in terms of Algorithm~\ref{algorithm}, the minorant is computed on a mesh (and in particular on the
boundary layer $S$) steered by the majorant alone; see Figure \ref{fig:example1:mesh}. This procedure already leads
to a satisfying minorant (see all experiments in Section~\ref{section:numerics}), but obviously not to the most accurate minorant possible.}

\revision{{\rm(iii)} 
The latter approach can be improved in several ways:
First, one can solve~\eqref{eq:upper-bound-discrete} and~\eqref{eq:mixedproblemstripdiscrete} (resp.~\eqref{eq:lower-bound2ddiscrete}) by adaptive FEM on separate (generically) 
different boundary layers.
Second, one can consider higher-order elements for the auxiliary FEM problems.
Finally, another option is to include the 
local contributions of \eqref{eq:experiments:minorant},
\begin{align}
\label{eq:experiments:minorant_local}
\nu_{h}(T):=2 \, \scp{g - \str{u_h}{\Gamma}}{\normal\cdot\ttau_h|_{\Gamma}}{\Lt(\Gamma\cap T)}
- \ \norm{\ttau_h}{\Lt(T)}^2
\end{align}
for all $T\in\TT_{h}^S$,
into the marking procedure. Figure \ref{fig:example1:errmajmin_fullmin} visualizes some results, where (instead of the marking strategy in Algorithm~\ref{algorithm}{\rm(viii)}) the mesh is now steered by the size of the confidence 
interval of the error, i.e., $\eta_h(T) + {\rm osc}_h(T) - \nu_h(T)$, and the minorant improves.
Overall, all these approaches are
computationally more costly and 
only make sense, if sharp confidence intervals of the error are needed during the full runtime of the adaptive algorithm. In our understanding of reliable algorithms, there are not many practical situations in which the minorant becomes relevant before the final solution $u_h$ has been computed and fixed.
\qed}
\end{remark}

\newcommand{\example}[1]{%
  \vskip2mm
  \refstepcounter{subsection}
  {\bf Example~\thesubsection~(#1).}\ 
}

\section{Numerical experiments}
\label{section:numerics}

\noindent
This section reports on some 2D numerical experiments to underline the accuracy of the introduced error estimates and the performance of the proposed adaptive strategy from Algorithm~\ref{algorithm}. All computations are done in {\scshape Matlab}, where we build on the toolbox \textsc{Hilbert} from \cite{hilbert} for the lowest-order BEM, on~\cite{p1afem} 
for $\PP^1$-FEM ($p=1$ in \eqref{Jhpdef})
resp.\ \cite{p2afem} for $\PP^2$-FEM ($p=2$ in \eqref{Jhpdef}), 
and on~\cite{ccb} for the lowest-order $\mathsf{RT}$-FEM. Throughout, we consider Algorithm~\ref{algorithm} for uniform mesh-refinement (i.e., $\theta = 1$) as well as for adaptive mesh-refinement (i.e., $0 < \theta < 1$).

%
\begin{figure}[t]
\begin{subfigure}[c]{.48\textwidth}
\includegraphics[trim={4.1cm 1.1cm 3.4cm 1cm},clip,width=\textwidth]{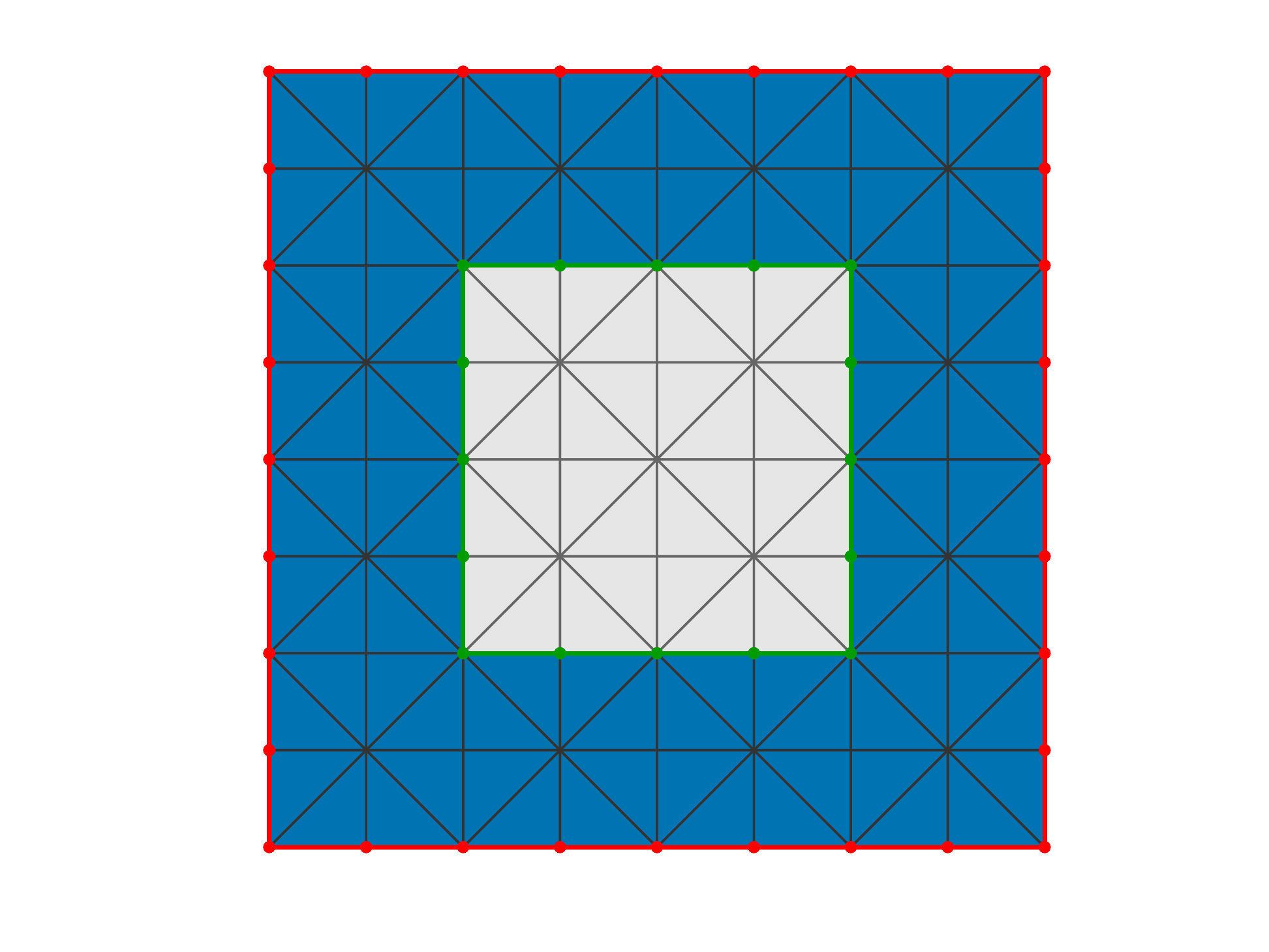}
\caption{$\#\TT_h^S = 72$, $\#\FF_h^\Gamma = 32$, $\ell = 0$}
\end{subfigure}%
\hfill
\begin{subfigure}[c]{.48\textwidth}
\includegraphics[trim={4.1cm 1.1cm 3.4cm 1cm},clip,width=\textwidth]{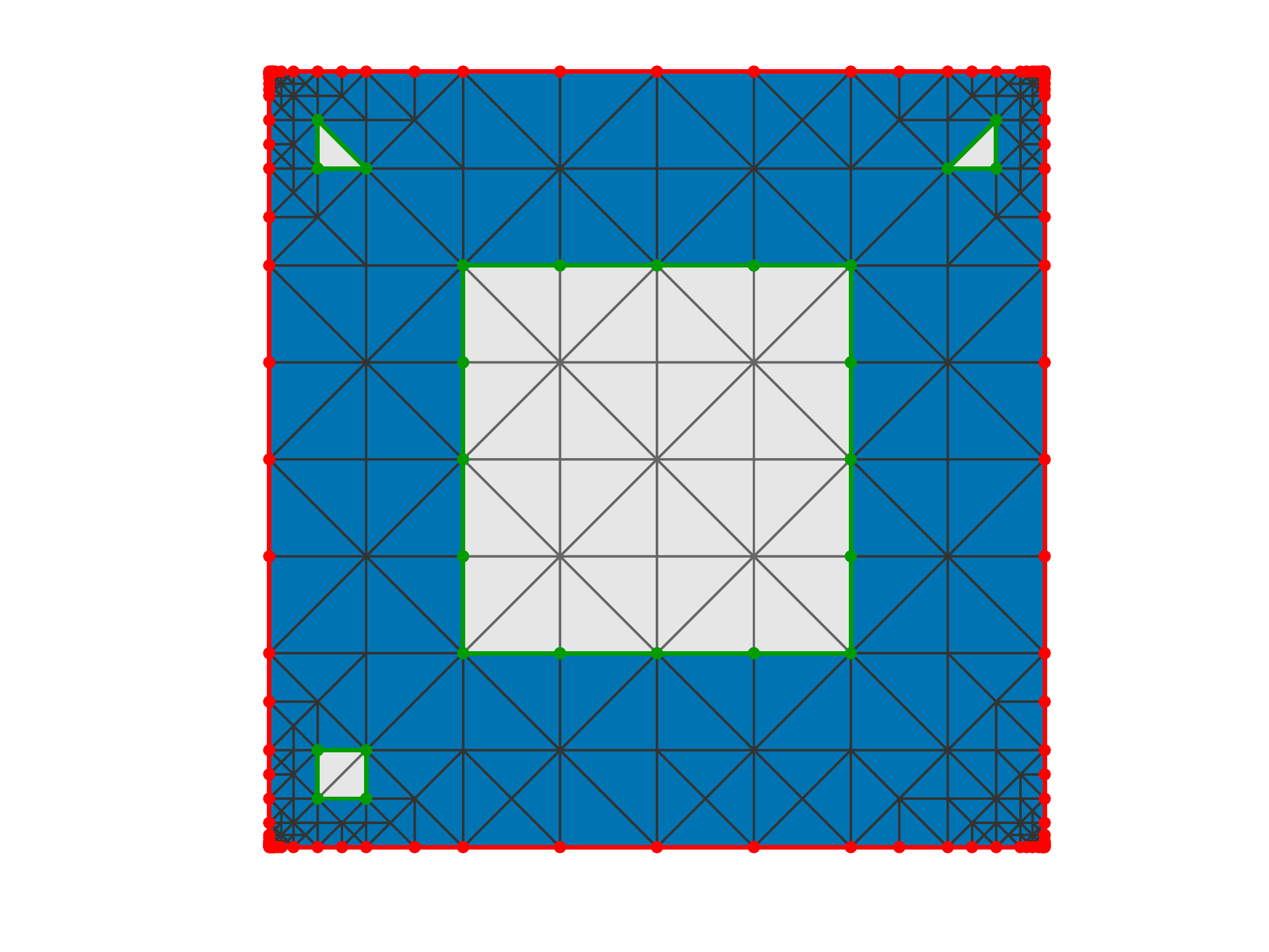}\hfill
\caption{$\#\TT_{h}^S = 314$, $\#\FF_{h}^\Gamma = 119$, $\ell = 17$}
\end{subfigure}
\\
\begin{subfigure}[c]{.48\textwidth}
\includegraphics[trim={4.1cm 1.1cm 3.4cm 1cm},clip,width=\textwidth]{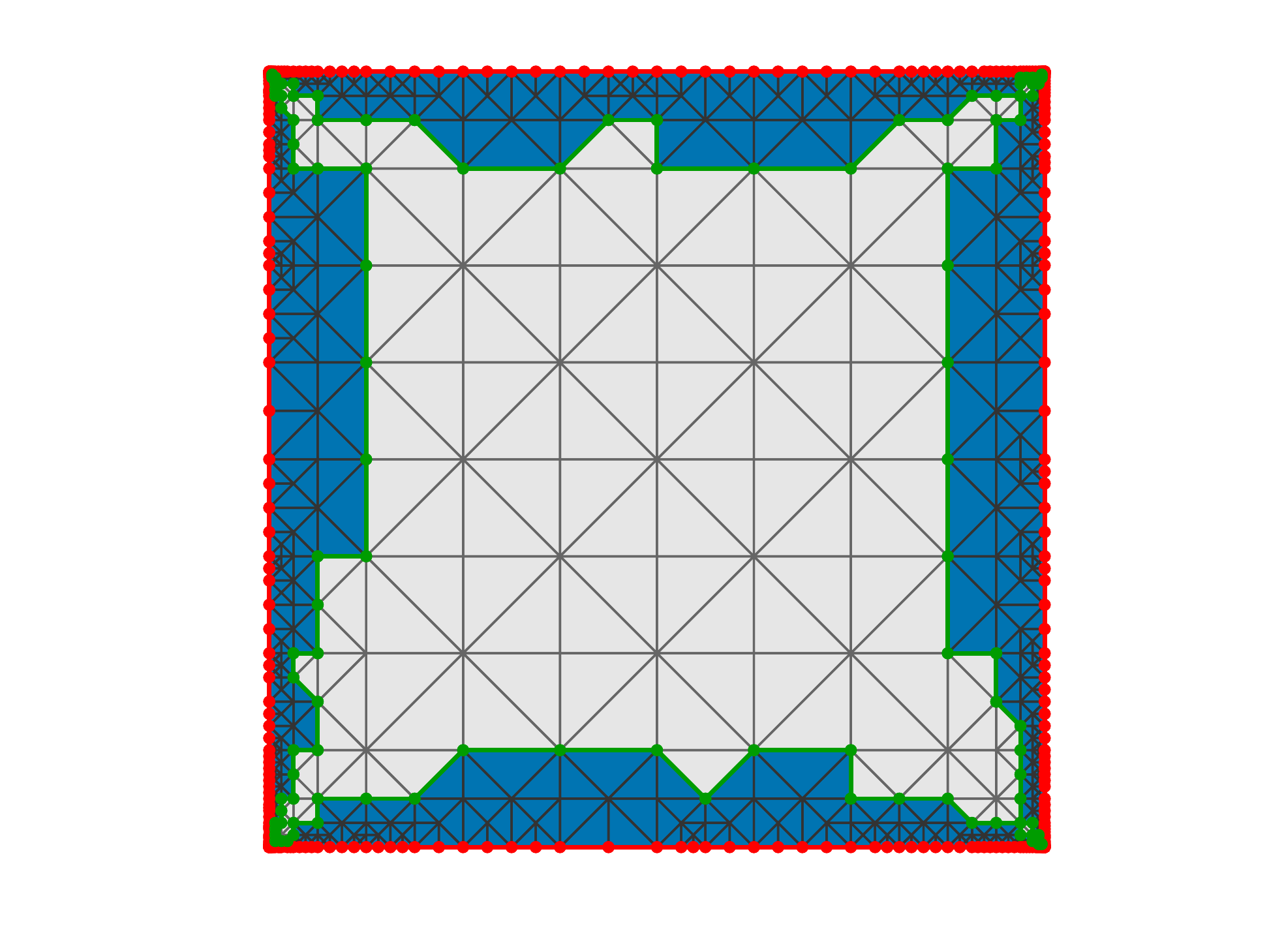}\hfill
\caption{$\#\TT_{h}^S = 923$, $\#\FF_{h}^\Gamma =352$, $\ell = 27$}
\end{subfigure}
\hfill
\begin{subfigure}[c]{.48\textwidth}
\includegraphics[trim={4.1cm 1.1cm 3.4cm 1cm},clip,width=\textwidth]{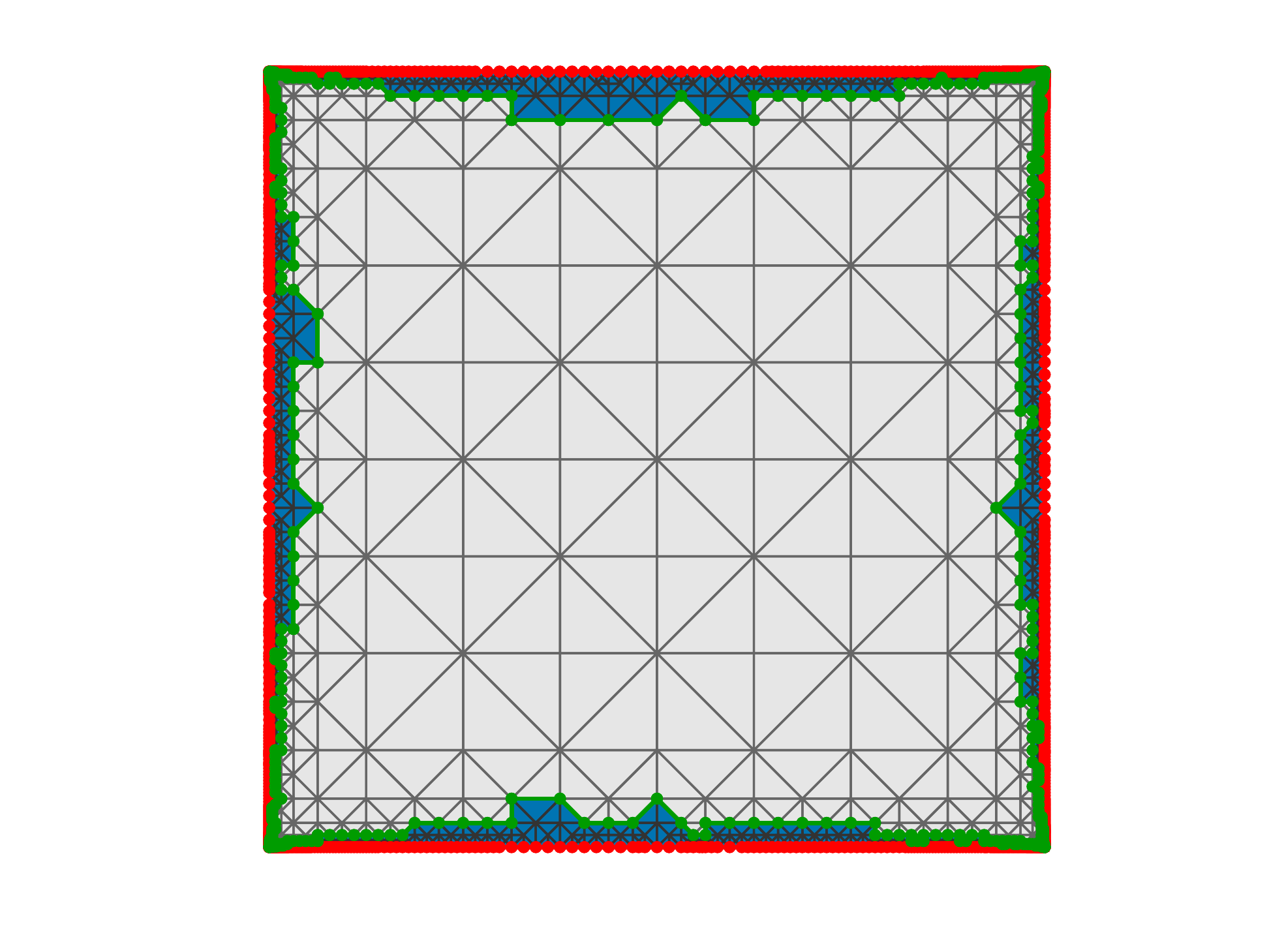}\hfill
\caption{$\#\TT_{h}^S = 3176$, $\#\FF_{h}^\Gamma = 1148$, $\ell = 38$}
\end{subfigure}
\caption{Adaptively generated meshes in Example~\ref{example1} for $p=1$ and $\theta = 0.6$. We indicate the boundary layer {\bf\color{tuwBlue}$\boldsymbol{S}$~(blue)}, the boundary {\bf\cred$\boldsymbol{\Gamma}$~(red)}, and the interior boundary {\bf\color{green!50!black}$\boldsymbol{\Gamma^c = \partial S \setminus \Gamma}$~\bf(green)}. The {triangles} $\color{tuwBlue}\boldsymbol{T \in \TT_h^S}\color{black} \subset \TT_h$ are indicated in {\bf\color{tuwBlue}blue}. The {triangles} ${\color{gray}\boldsymbol{T \in \TT_h \backslash \TT_h^S}}$ are indicated in {\bf\color{gray}gray}.}
\label{fig:example1:mesh}
\end{figure}
\begin{figure}[t]
\centering
\includegraphics[trim={.7cm 0 0 0},clip,width=.49\textwidth]{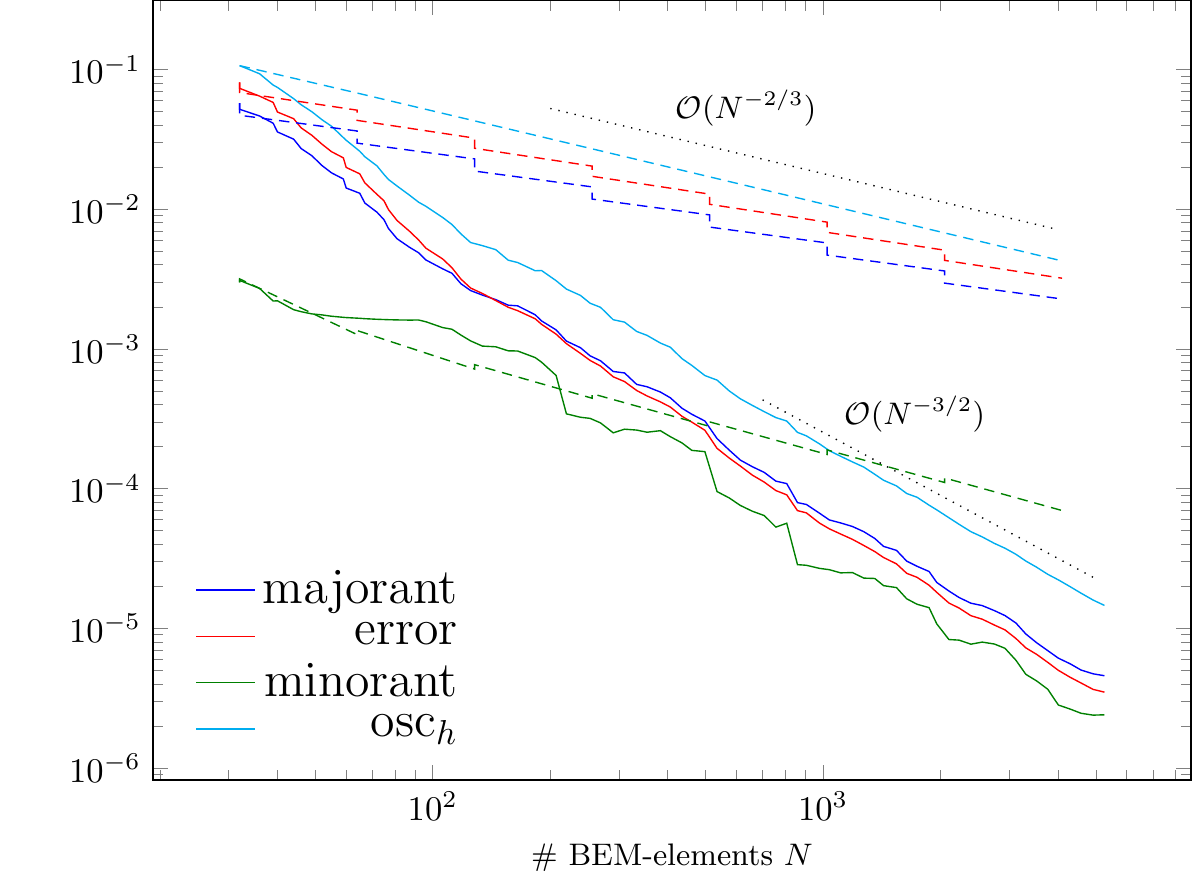}
\includegraphics[trim={.7cm 0 0 0},clip,width=.49\textwidth]{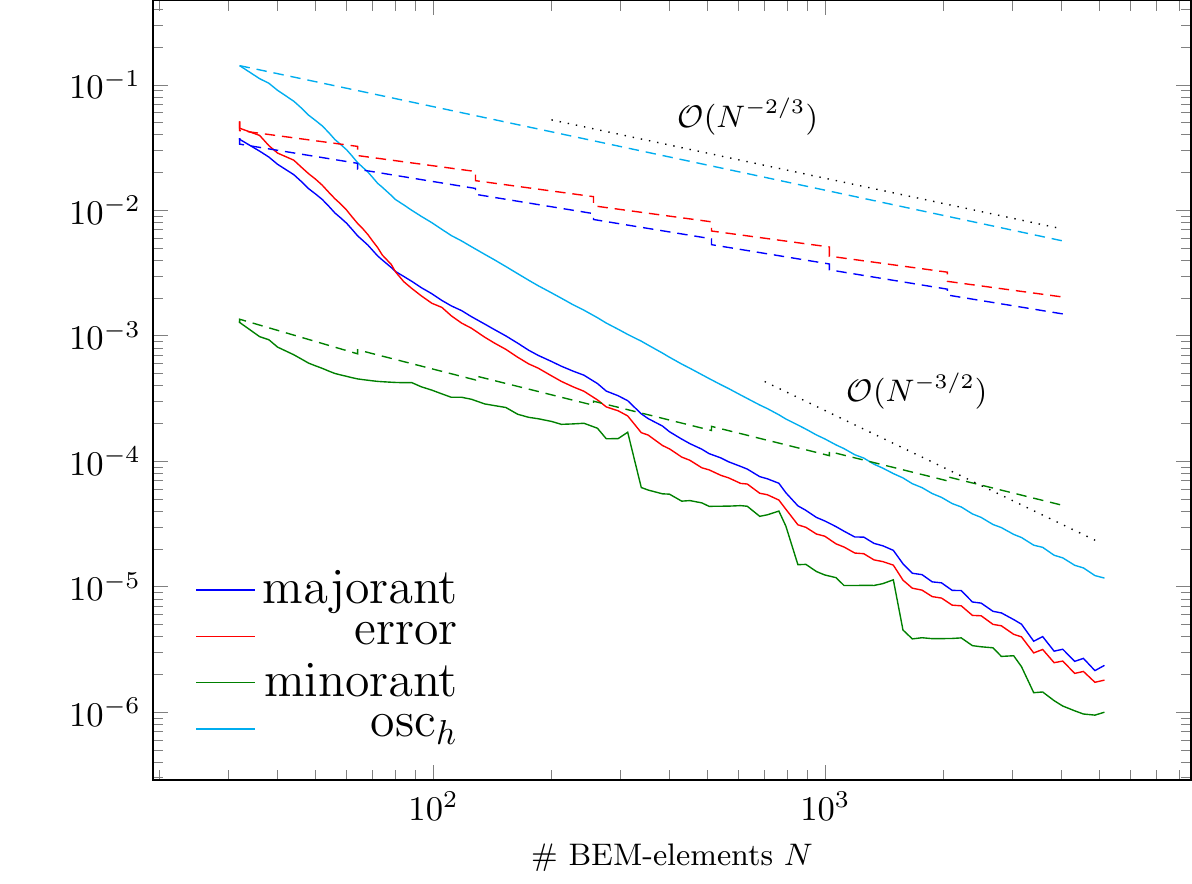}
\caption{Comparison of adaptive mesh-refinement with $\theta = 0.4$ (solid) vs.\ uniform mesh-refinement (dashed) in Example~\ref{example1}. The majorant is computed by $\PP^1$-FEM (left) and $\PP^2$-FEM (right). We compare the potential error $\norm{\nabla(u-u_h)}{\Lt(\Omega)}$, the majorant $\norm{\nabla w_h}{\Lt(\Omega)}$ from~\eqref{maj:discrete}, the data oscillations ${\rm osc}_h$ from~\eqref{eq:osc}, and the minorant $\mmin(\ttau_h)^{1/2}$ from~\eqref{eq:experiments:minorant}.}
\label{fig:example1:error-estimator}
\end{figure}
\begin{figure}[t]
\centering
\includegraphics[trim={.7cm 0 0 0},clip,width=.49\textwidth]{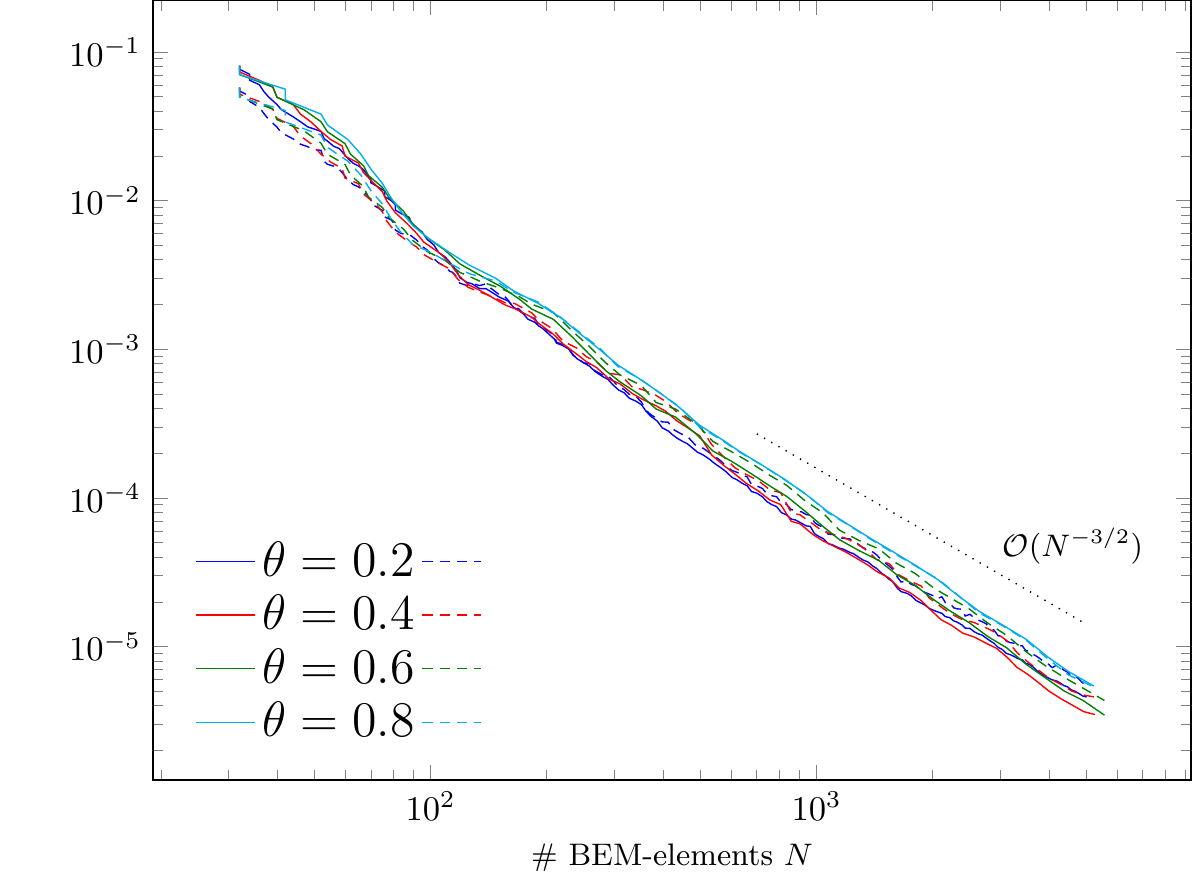}
\includegraphics[trim={.7cm 0 0 0},clip,width=.49\textwidth]{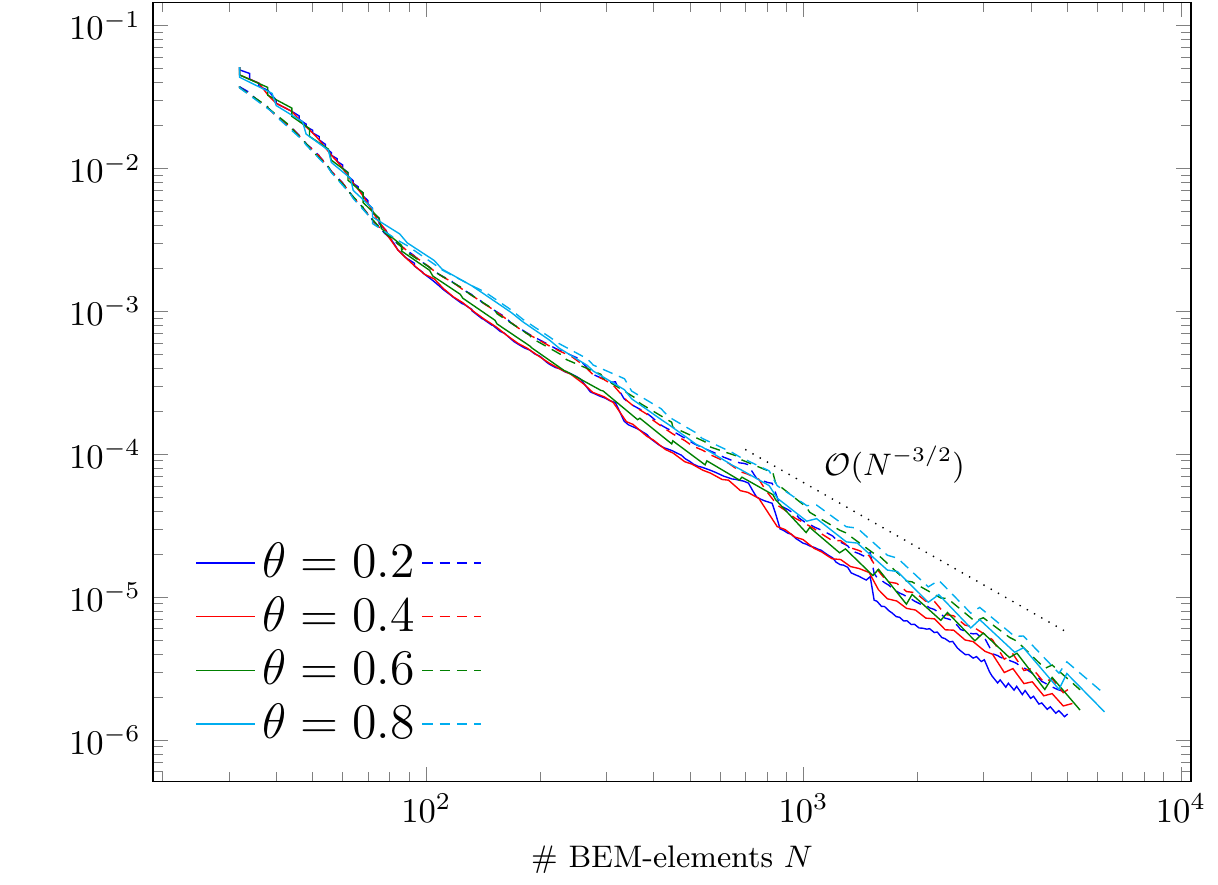}
\caption{Influence of the marking parameter $\theta \in \{0.2, 0.4, 0.6, 0.8\}$ on adaptive mesh-refinement in Example~\ref{example1}. The majorant is computed by $\PP^1$-FEM (left) and $\PP^2$-FEM (right). We compare the potential error (solid) $\norm{\nabla(u-u_h)}{\Lt(\Omega)}$ as well as the majorant (dashed) $\norm{\nabla w_h}{\Lt(\Omega)}$ from~\eqref{maj:discrete}.}
\label{fig:example1:theta}
\end{figure}

\begin{table}[th]
\begin{tabular}{|c||c|c|c|c|c|c|c|}
\hline
$\ell$ & $\# \FF^\Gamma_h$ & $\frac{\# \TT^S_h}{\# \FF^\Gamma_h}$ & \footnotesize$\norm{\nabla (u-u_h)}{\Lt(\Omega)}$& \footnotesize$\norm{\nabla w_h}{\Lt(S)}$ & \footnotesize$\frac{\norm{\nabla w_h}{\Lt(S)}}{\norm{\nabla(u-u_h)}{\Lt(\Omega)}}$ & $\footnotesize\frac{\norm{\grad w_h}{\Lt(S)}}{\mmin(\ttau_h)^{1/2}}$ \\ \hline\hline
$0$ & $32$ & $2.25$ & $8.01 e-2$ & $5.75 e-2$ & $0.71$ & $19.16$ \\ \hline
$1$ & $64$ & $2.63$ & $5.12 e-2$ & $3.63 e-2$ & $0.71$ & $28.67$ \\ \hline
$2$ & $128$ & $2.81$ & $3.23 e-2$ & $2.30 e-2$ & $0.71$ & $31.96$ \\ \hline
$3$ & $256$ & $2.91$ & $2.03 e-2$ & $1.45 e-2$ & $0.71$ & $32.56$ \\ \hline
$4$ & $512$ & $2.95$ & $1.28 e-2$ & $9.11 e-3$ & $0.71$ & $32.66$ \\ \hline
$5$ & $1024$ & $2.98$ & $8.08 e-3$ & $5.74 e-3$ & $0.71$ & $32.67$ \\ \hline
$6$ & $2048$ & $2.99$ & $5.09 e-3$ & $3.62 e-3$ & $0.71$ & $32.67$ \\ \hline
$7$ & $4096$ & $3.00$ & $3.21 e-3$ & $2.28 e-3$ & $0.71$ & $32.67$ \\ \hline
\end{tabular}
\caption{Uniform mesh-refinement in Example~\ref{example1}. We focus on the degrees of freedom, the potential error $\norm{\nabla(u-u_h)}{\Lt(\Omega)}$, the accuracy of the $\PP^1$-FEM majorant $\norm{\nabla w_h}{\Lt(S)}$ from~\eqref{maj:discrete}, 
and the quotient of the majorant and minorant.}
\label{table1}
\end{table}

\begin{table}[th]
\begin{tabular}{|c||c|c|c|c|c|c|c|}
\hline
$\ell$ & $\# \FF^\Gamma_h$ & $\frac{\# \TT^S_h}{\# \FF^\Gamma_h}$ & \footnotesize${\rm dof}(\TT_h^S)$ & \footnotesize$\norm{\nabla (u-u_h)}{\Lt(\Omega)}$ & \footnotesize$\norm{\nabla w_h}{\Lt(S)}$ & \footnotesize$\frac{\norm{\nabla w_h}{\Lt(S)}}{\norm{\nabla(u-u_h)}{\Lt(\Omega)}}$ & $\footnotesize\frac{\norm{\grad w_h}{\Lt(S)}}{\mmin(\ttau_h)^{1/2}}$ \\ \hline\hline
$0$ & $32$ & $2.25$ & $15$ & $8.01 e-2$ & $5.75 e-2$ & $0.71$ & $19.16$ \\ \hline
$4$ & $40$ & $2.33$ & $28$ & $4.97 e-2$ & $3.58 e-2$ & $0.72$ & $16.17$ \\ \hline
$10$ & $59$ & $2.44$ & $60$ & $2.33 e-2$ & $1.65 e-2$ & $0.71$ & $9.78$ \\ \hline
$16$ & $77$ & $2.66$ & $103$ & $9.95 e-3$ & $7.29 e-3$ & $0.73$ & $4.50$ \\ \hline
$22$ & $112$ & $2.60$ & $148$ & $3.81 e-3$ & $3.48 e-3$ & $0.91$ & $2.51$ \\ \hline
$28$ & $165$ & $2.82$ & $234$ & $1.88 e-3$ & $2.03 e-3$ & $1.08$ & $2.11$ \\ \hline
$34$ & $253$ & $2.83$ & $343$ & $8.27 e-4$ & $8.92 e-4$ & $1.08$ & $2.80$ \\ \hline
$40$ & $383$ & $2.81$ & $512$ & $4.18 e-4$ & $4.91 e-4$ & $1.18$ & $1.89$ \\ \hline
$46$ & $575$ & $2.70$ & $707$ & $1.66 e-4$ & $1.89 e-4$ & $1.14$ & $2.20$ \\ \hline
$52$ & $860$ & $2.63$ & $978$ & $6.96 e-5$ & $7.94 e-5$ & $1.14$ & $2.78$ \\ \hline
$58$ & $1072$ & $2.61$ & $1389$ & $3.92 e-5$ & $4.92 e-5$ & $1.25$ & $2.15$ \\ \hline
$64$ & $1869$ & $2.61$ & $2008$ & $2.04 e-5$ & $2.55 e-5$ & $1.25$ & $1.82$ \\ \hline
$70$ & $2748$ & $2.58$ & $2803$ & $1.06 e-5$ & $1.34 e-5$ & $1.27$ & $1.73$ \\ \hline
$76$ & $4007$ & $2.55$ & $3976$ & $5.00 e-6$ & $6.12 e-6$ & $1.22$ & $2.16$ \\ \hline
$80$ & $5259$ & $2.53$ & $5077$ & $3.50 e-6$ & $4.58 e-6$ & $1.31$ & $1.90$ \\ \hline
\end{tabular}
\caption{Adaptive mesh-refinement with $\theta = 0.4$ in Example~\ref{example1}. We focus on the degrees of freedom, the potential error $\norm{\nabla(u-u_h)}{\Lt(\Omega)}$, the accuracy of the $\PP^1$-FEM majorant $\norm{\nabla w_h}{\Lt(\Omega)}$ from~\eqref{maj:discrete}, and the quotient of the majorant and minorant.}
\label{table2}
\end{table}
\begin{figure}[t]
\centering
\includegraphics[trim={.7cm 0 0 0},clip,width=.49\textwidth]{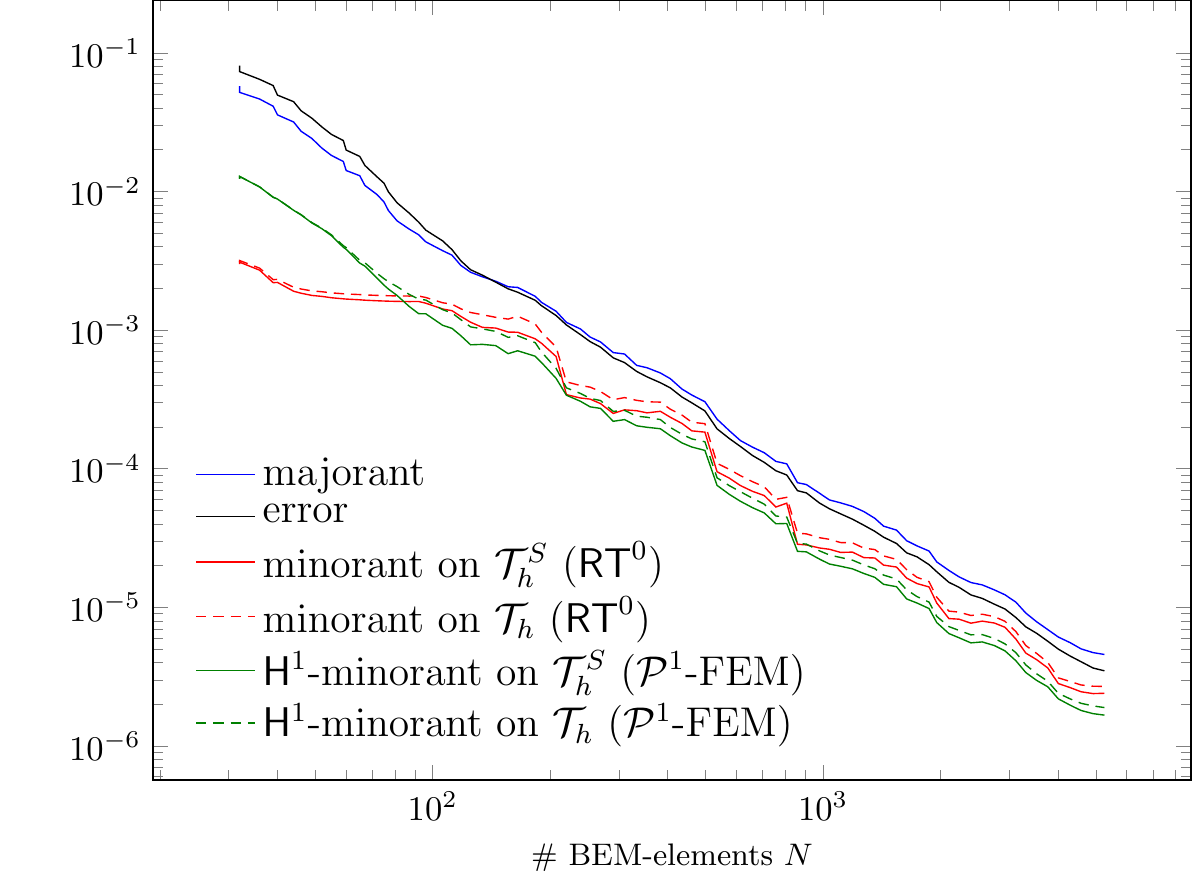}
\includegraphics[trim={.7cm 0 0 0},clip,width=.49\textwidth]{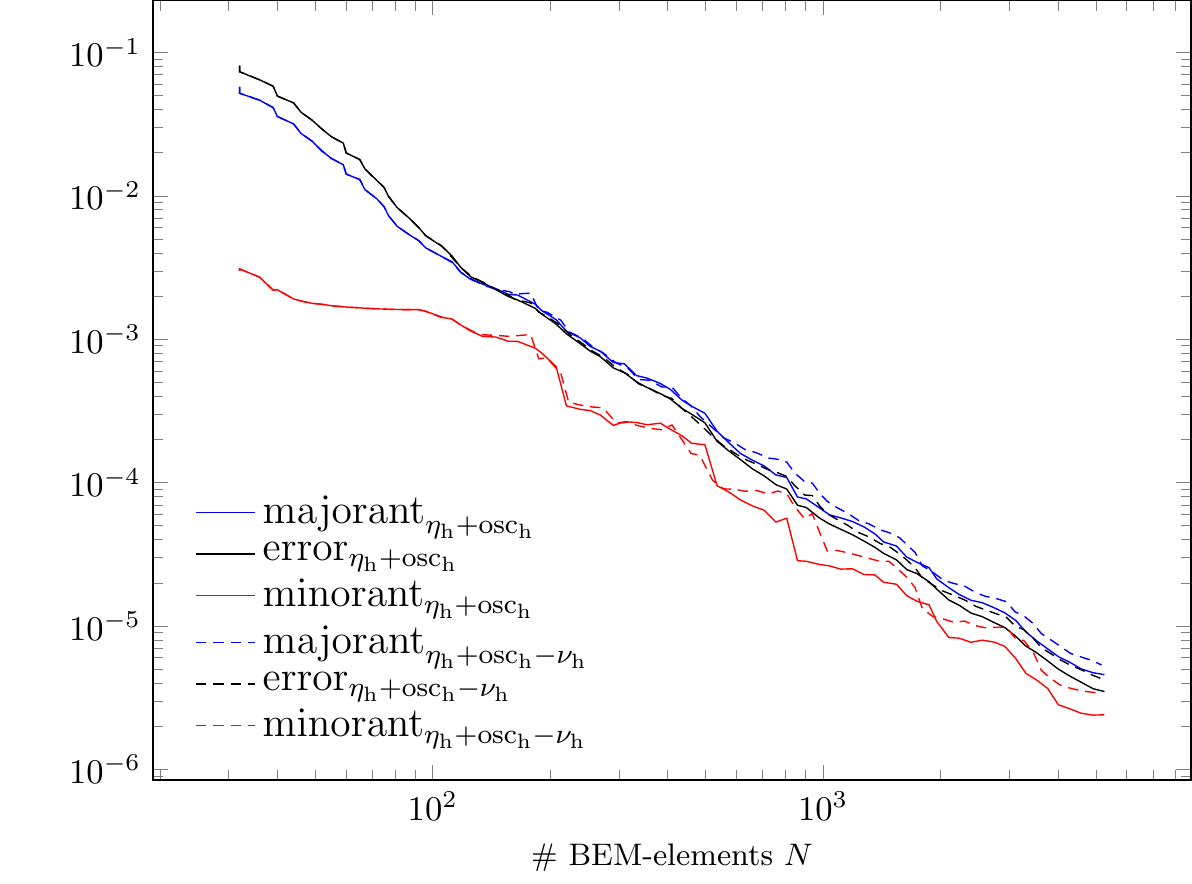}
\caption{\revision{\emph{Left:} Comparison of two versions of the minorant by either solving \eqref{eq:mixedproblemstripdiscrete} with $\RTz$-elements or \eqref{eq:lower-bound2ddiscrete} with $\mathcal{P}^1$-elements on both $\TT_{h}^{S}$ and $\TT_{h}$ with respect to the adaptive mesh-refinement with $\theta = 0.4$ 
in Example~\ref{example1}. We observe that solving on full $\TT_h$ instead of the boundary layer $\TT_h^S$ leads only to a marginal improvement of the minorant.
\emph{Right:} We compute two versions of the triple (majorant, error, minorant) 
in Example~\ref{example1} with $\theta = 0.4$.
First, we repeat the computations obtained by Algorithm \ref{algorithm} (solid lines).
In the second case, we add the local contributions $-\nu_{h}$ of the minorant
from~\eqref{eq:experiments:minorant_local}
to the error indicator in~\eqref{eq:doerfler}, i.e., 
the minorant is now part of the adaptive mesh-refinement strategy (dashed).}}
\label{fig:example1:errmajmin_fullmin}
\end{figure}

\example{Smooth potential in square domain}
\label{example1}
%
We consider problem~\eqref{eq:strongform} with prescribed exact solution
\begin{align}\label{eq:example1}
 u(x) = \cosh(x_1) \, \cos(x_2)
 \quad \text{for all } x \in \Omega := (0, 1/2)^2
\end{align}
on the square domain $\Omega$ with diameter $\diam(\Omega) = \sqrt{1/2}$. 
We start Algorithm~\ref{algorithm} with an initial triangulation $\TT_h$ of $\Omega$ into $\#\TT_h = 128$ right triangles.

Even though $u$ as well as its Dirichlet data $g = \str{u}{\Gamma}$ are smooth, we note that the sought integral density $\phi\in\H^{-1/2}(\Gamma)$ of the indirect formulation~\eqref{eq:weaksing} has no physical meaning and usually lacks smoothness (by inheriting the generic singularities from the interior as well as the exterior domain problem). Consequently, one may expect that uniform mesh-refinement (on the boundary) will not reveal the optimal convergence behavior $\norm{\phi-\phi_h}{\H^{-1/2}(\Gamma)} = \OO(h^{3/2}) = \OO(N^{-3/2})$, where $N = \#\FF_h^\Gamma$ is the number of elements of a uniform mesh $\FF_h^\Gamma$ of $\Gamma$ and $3/2$ is the best possible convergence rate for a piecewise constant approximation $\phi \approx \phi_h\in \PP^0(\FF^{\Gamma}_h)$. 

The initial meshes and some adaptively generated meshes are visualized in Figure~\ref{fig:example1:mesh}. 
Figure~\ref{fig:example1:error-estimator} shows the resulting potential error and the computed minorant~\eqref{min:discrete} and majorant~\eqref{maj:discrete}, as well as the corresponding data oscillations~\eqref{eq:osc} for $p = 1$ resp.~$p=2$. Here, the potential error $\norm{\nabla ( u - u_h )}{\Lt(\Omega)} \approx \norm{\nabla I_h( u - u_h )}{\Lt(\Omega)}$ is computed by numerical quadrature. More precisely, we employ  the $\PP^2$-nodal interpolant $I_h : C(\overline\Omega) \to \SSS^2(\TT_h^{\rm unif})$ on a (three times) uniform refinement $\TT_h^{\rm unif}$ of the finest adaptive mesh $\TT_h$. We stress that the plot neglects the non-accessible constant $\Cosc$ from~\eqref{eq:osc}. The results for $p = 1$ and $p = 2$ are similar. For uniform mesh-refinement, we obtain the expected reduced order of convergence. For adaptive mesh-refinement, we regain the optimal order of convergence. Moreover, for adaptive mesh-refinement, we see that the majorant is, in fact, a sharp estimate for the (in general unknown) potential error. 

The computed minorant is less accurate. \revision{With reference to Remark \ref{rem:placementofminorant}, we stress that the minorant is always computed with lowest-order Raviart-Thomas elements
on the same boundary layer as the majorant (which is obtained by adaptivity driven by the majorant).
In Figure~\ref{fig:example1:errmajmin_fullmin}, we even see that the minorant hardly enhances
when the mixed problem~\eqref{eq:mixedproblemstripdiscrete} is solved on the full domain $\TT_{h}$.
In our view, this indicates that the numerical treatment of the boundary residual $g-\str{u_{h}}{\Gamma}$ and its oscillations
is a key-point for accuracy, i.e., one should consider higher-order elements for the minorant.}

The empirical values for uniform (resp.\ adaptive) mesh-refinement are also provided in Table~\ref{table1} (resp.\ Table~\ref{table2}).
\revision{In particular, we note that the ratio between the FEM DoF for obtaining the error estimates and the BEM DoF remains bounded, so that additional computational expenditures remain limited. The same observation is made if we compare the corresponding expenditures in terms of CPU time.}
Figure~\ref{fig:example1:theta} compares the numerical results for different choices of the adaptivity parameter $\theta \in \{0.2, 0.4, 0.6, 0.8\}$. We observe that any choice of $\theta$ regains, in fact, the optimal convergence rate.

\begin{figure}[t]
\begin{subfigure}[c]{.24\textwidth}
\centering\tiny
$\ell = 0$\\[1mm]
\includegraphics[trim={4.1cm 1.1cm 3.4cm 1cm},clip,width=\textwidth]{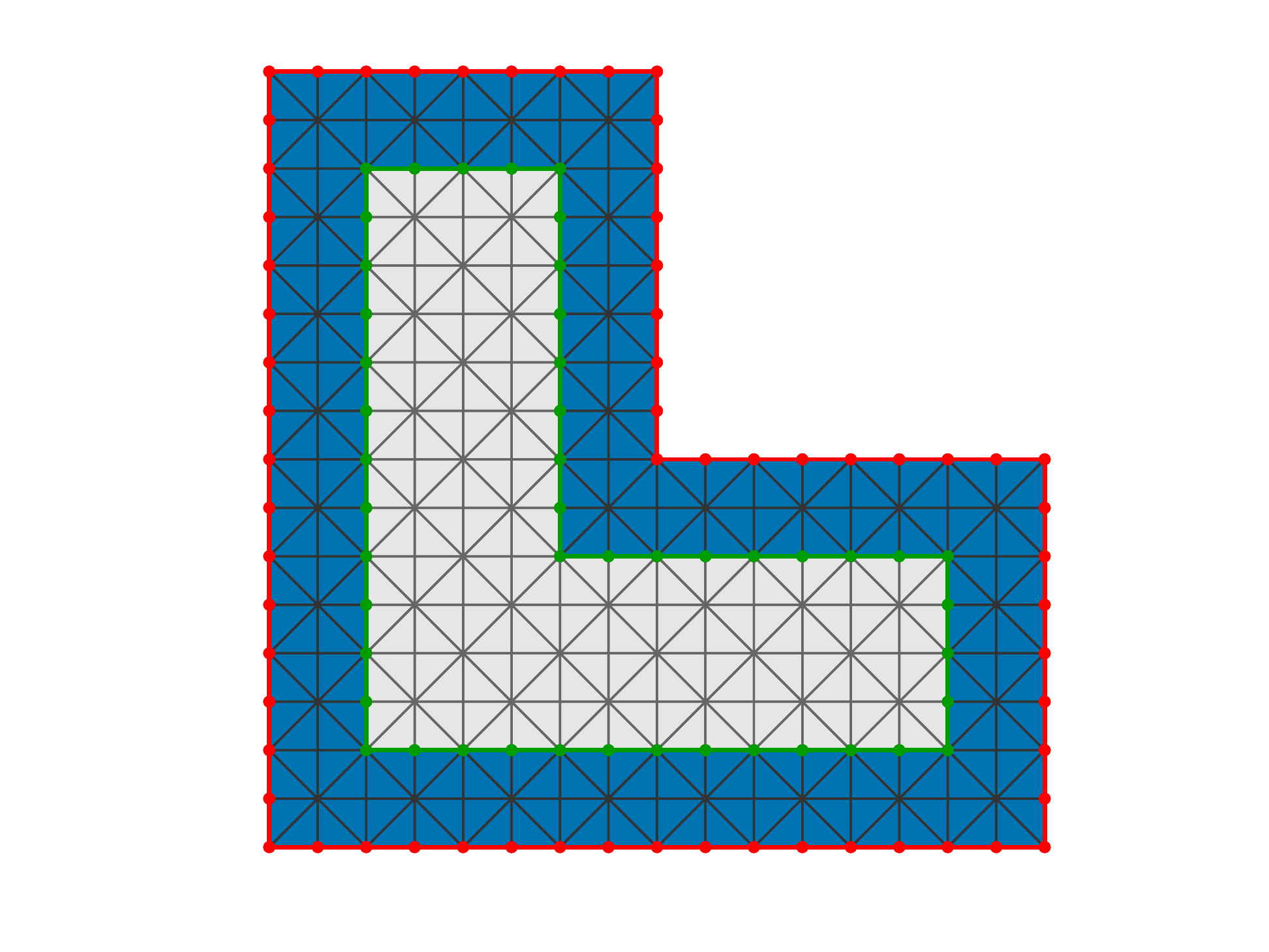}
$\#\TT_h^S = 168$, $\#\FF_h^\Gamma = 64$
\end{subfigure}
\hfill
\begin{subfigure}[c]{.24\textwidth}
\centering\tiny
$\ell = 17$\\[1mm]
\includegraphics[trim={4.1cm 1.1cm 3.4cm 1cm},clip,width=\textwidth]{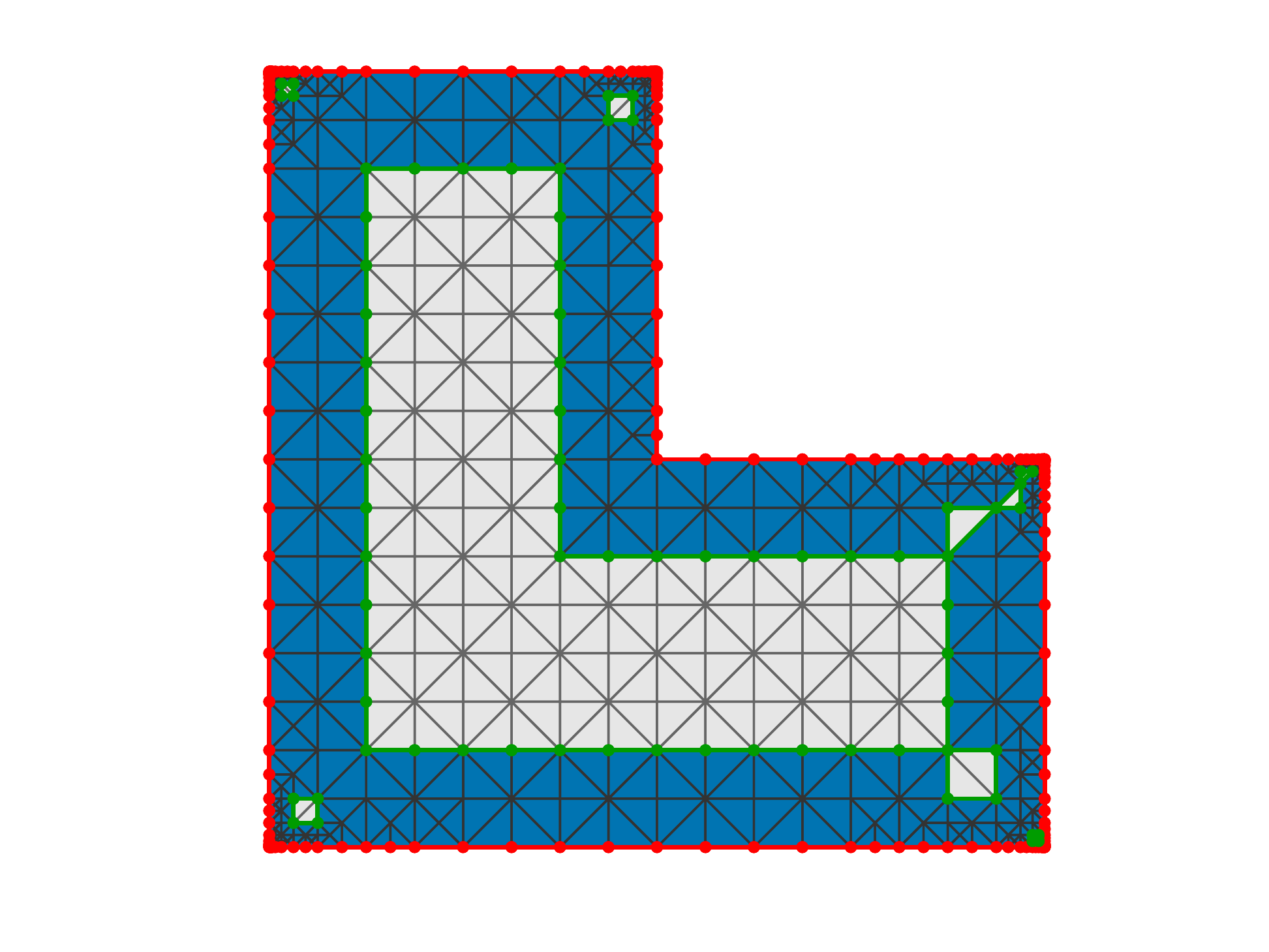}
$\#\TT_{h}^S = 532$, $\#\FF_{h}^\Gamma = 193$
\end{subfigure}
\hfill
\begin{subfigure}[c]{.24\textwidth}
\centering\tiny
$\ell = 27$\\[1mm]
\includegraphics[trim={4.1cm 1.1cm 3.4cm 1cm},clip,width=\textwidth]{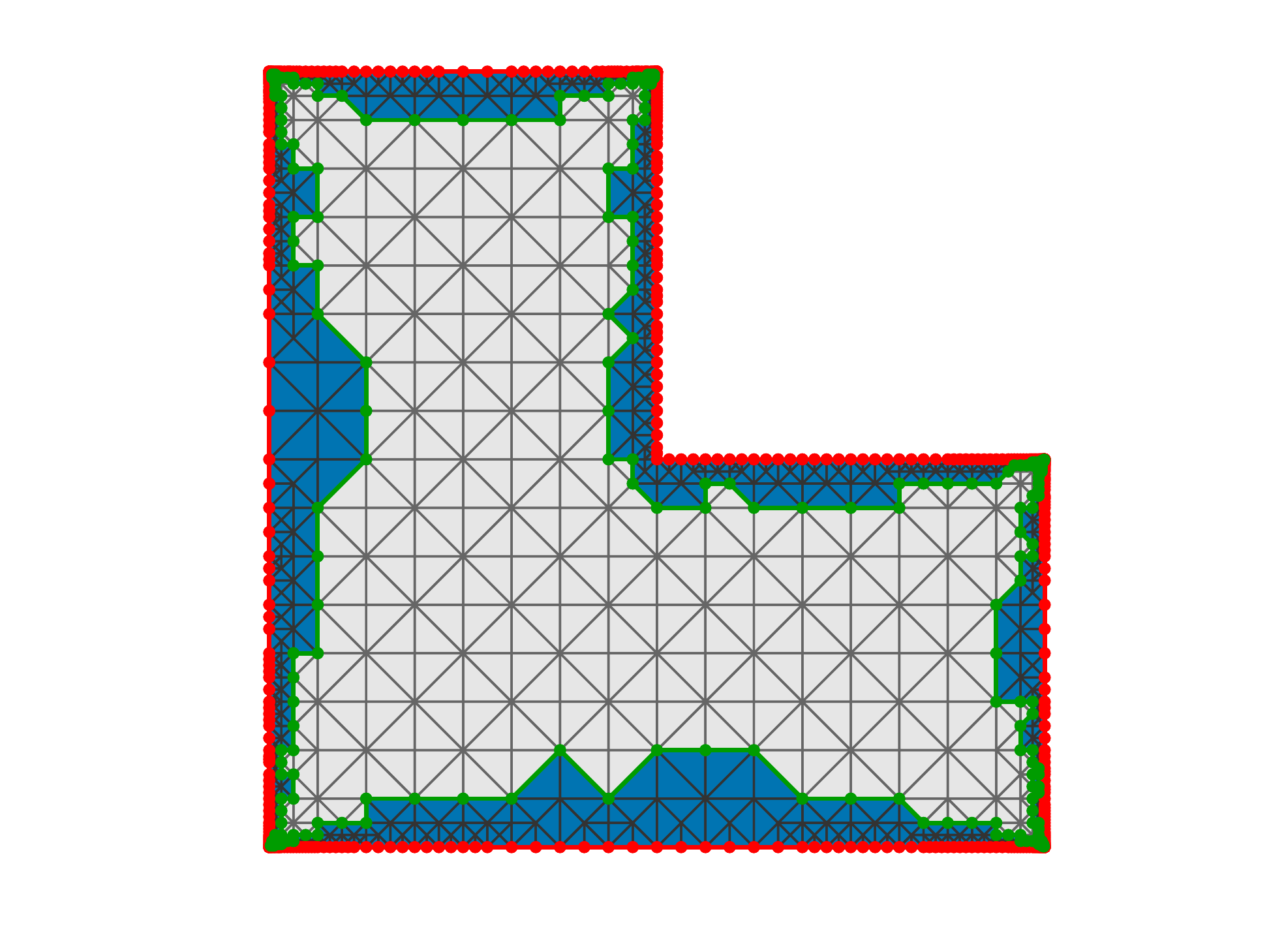}
$\#\TT_{h}^S = 1505$, $\#\FF_{h}^\Gamma = 562$
\end{subfigure}
\hfill
\begin{subfigure}[c]{.24\textwidth}
\centering\tiny
$\ell = 38$\\[1mm]
\includegraphics[trim={4.1cm 1.1cm 3.4cm 1cm},clip,width=\textwidth]{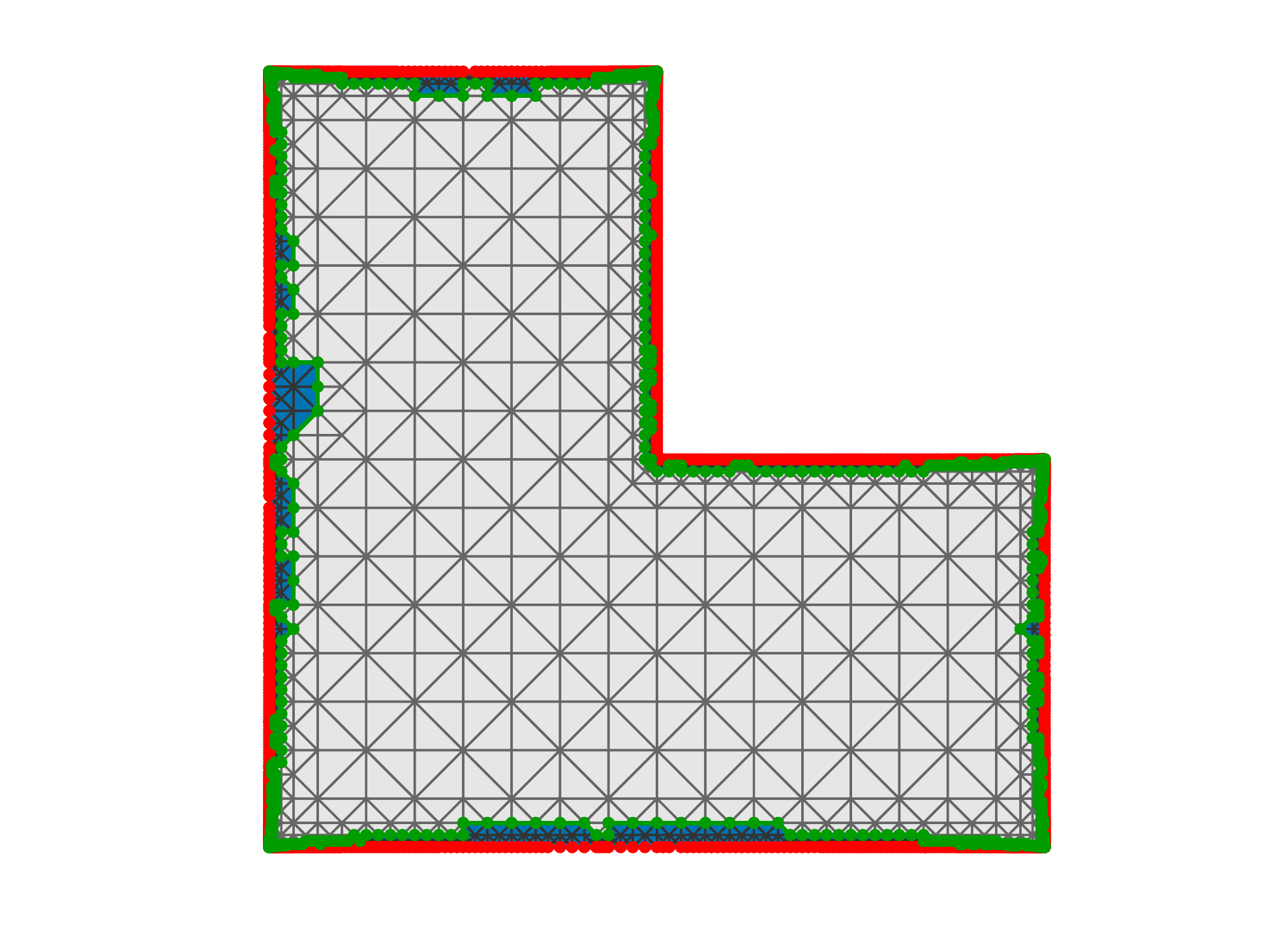}
$\#\TT_{h}^S = 4943$, $\#\FF_{h}^\Gamma = 1835$
\end{subfigure}
\caption{Adaptively generated meshes in Example~\ref{example2} for $p=1$ and $\theta = 0.6$;
 see Figure~\ref{fig:example1:mesh} for the color code.}
\label{fig:example2:mesh}
\end{figure}
\begin{figure}[t]
\centering
\includegraphics[trim={.7cm 0 0 0},clip,width=.49\textwidth]{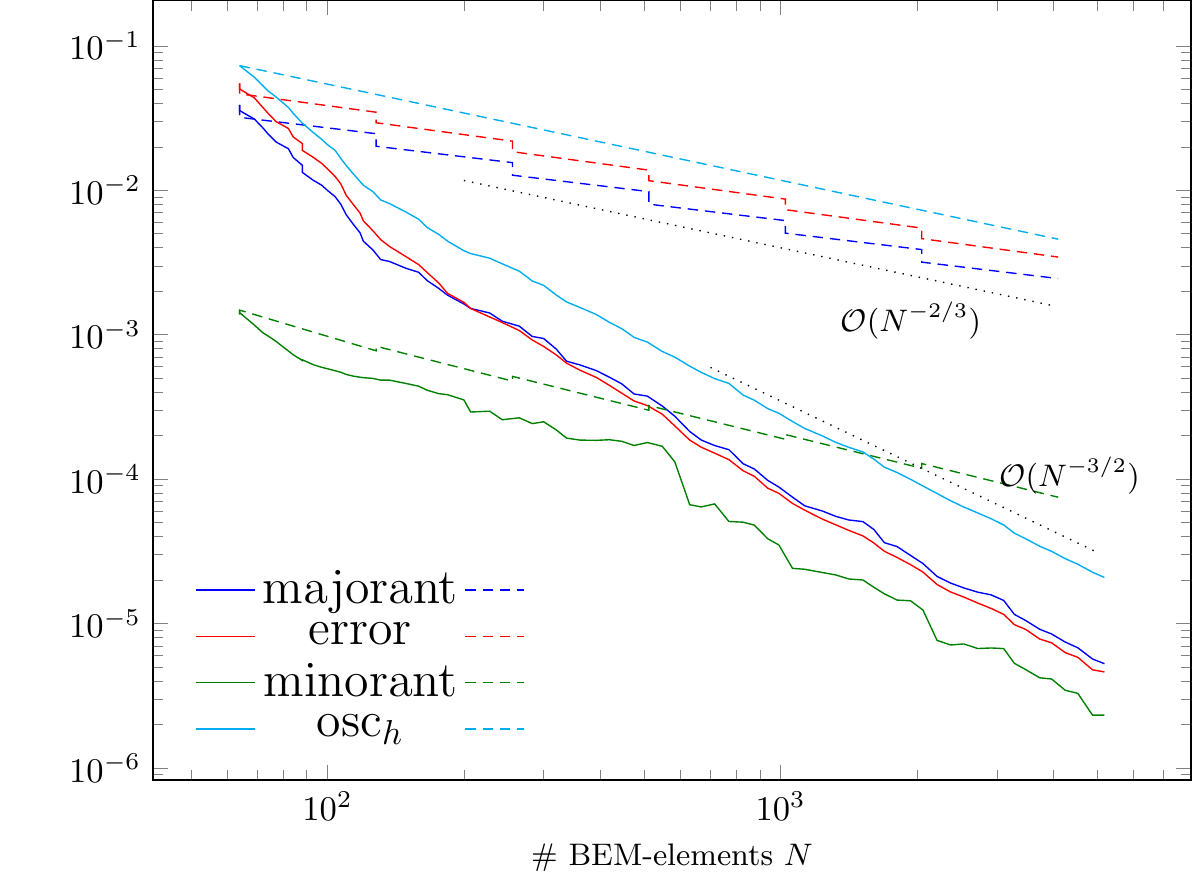}
\includegraphics[trim={.7cm 0 0 0},clip,width=.49\textwidth]{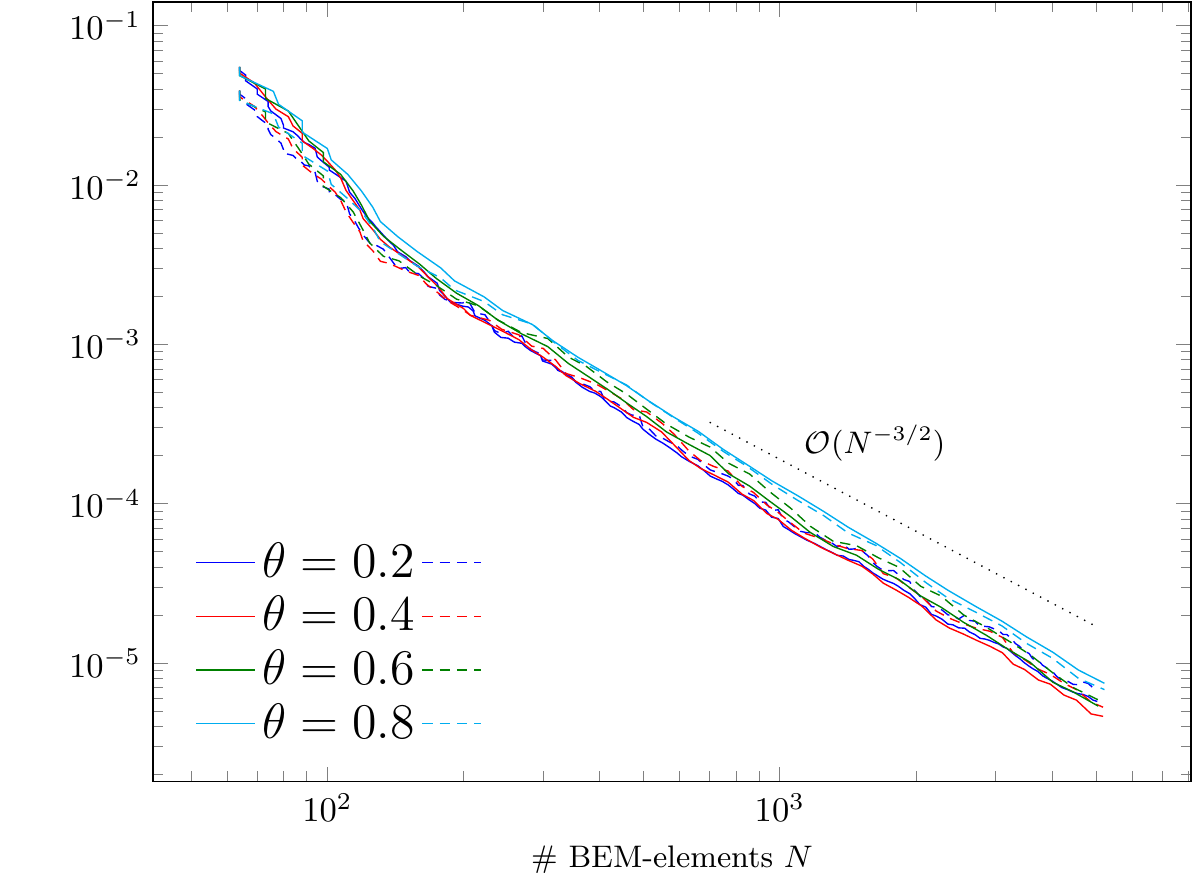}
\caption{Comparison of adaptive vs.\ uniform mesh-refinement in Example~\ref{example2}. The majorant is computed by $\PP^1$-FEM. \emph{Left:} 
We compare the potential error $\norm{\nabla(u-u_h)}{\Lt(\Omega)}$, the majorant $\norm{\nabla w_h}{\Lt(\Omega)}$ from~\eqref{maj:discrete}, the data oscillations ${\rm osc}_h$ from~\eqref{eq:osc}, and the minorant $\mmin(\ttau_h)^{1/2}$ from~\eqref{eq:experiments:minorant} for uniform (dashed) and adaptive mesh-refinement (solid) with $\theta = 0.4$. 
\emph{Right:} We compare the potential error (solid) and the majorant (dashed) for adaptive mesh-refinement for various choices of $\theta$.}
\label{fig:example2}
\end{figure}
%
\example{Smooth potential in L-shaped domain}
\label{example2}
%
We consider problem~\eqref{eq:strongform} with prescribed exact solution
\begin{align}
 u(x) = \cosh(x_1) \, \cos(x_2)
 \quad \text{for all } x \in \Omega := (0, 1/2)^2 \backslash \big( [(1/4,1/2]\times[0,1/4] \big)
\end{align}
on the L-shaped domain $\Omega$ with diameter $\diam(\Omega) = \sqrt{1/2}$. We start Algorithm~\ref{algorithm} with an initial triangulation $\TT_h$ of $\Omega$ into $\#\TT_0 = 384$ right triangles.

As in Section~\ref{example1}, the potential $u$ is smooth, but the sought density $\phi$ of the indirect BEM formulation lacks regularity.  
The initial meshes as well as some adaptively generated meshes are visualized in Figure~\ref{fig:example2:mesh}. Figure~\ref{fig:example2} visualizes some numerical results for uniform and adaptive mesh-refinement, where we proceed as in Section~\ref{example1}. Since $p = 1$ and $p = 2$ lead to similar results (not displayed), we only report the results for $p = 1$.

As expected from theory, the shape of $\Omega$ does not impact the functional error estimates: Overall, the results obtained correspond to those from Section~\ref{example1}, where uniform mesh-refinement leads to a suboptimal convergence behavior, which is cured by means of the proposed adaptive strategy.

%
\begin{figure}[t]
\begin{subfigure}[c]{.24\textwidth}
\centering\tiny
$\ell = 0$\\[1mm]
\includegraphics[trim={4.1cm 1.1cm 3.4cm 1cm},clip,width=\textwidth]{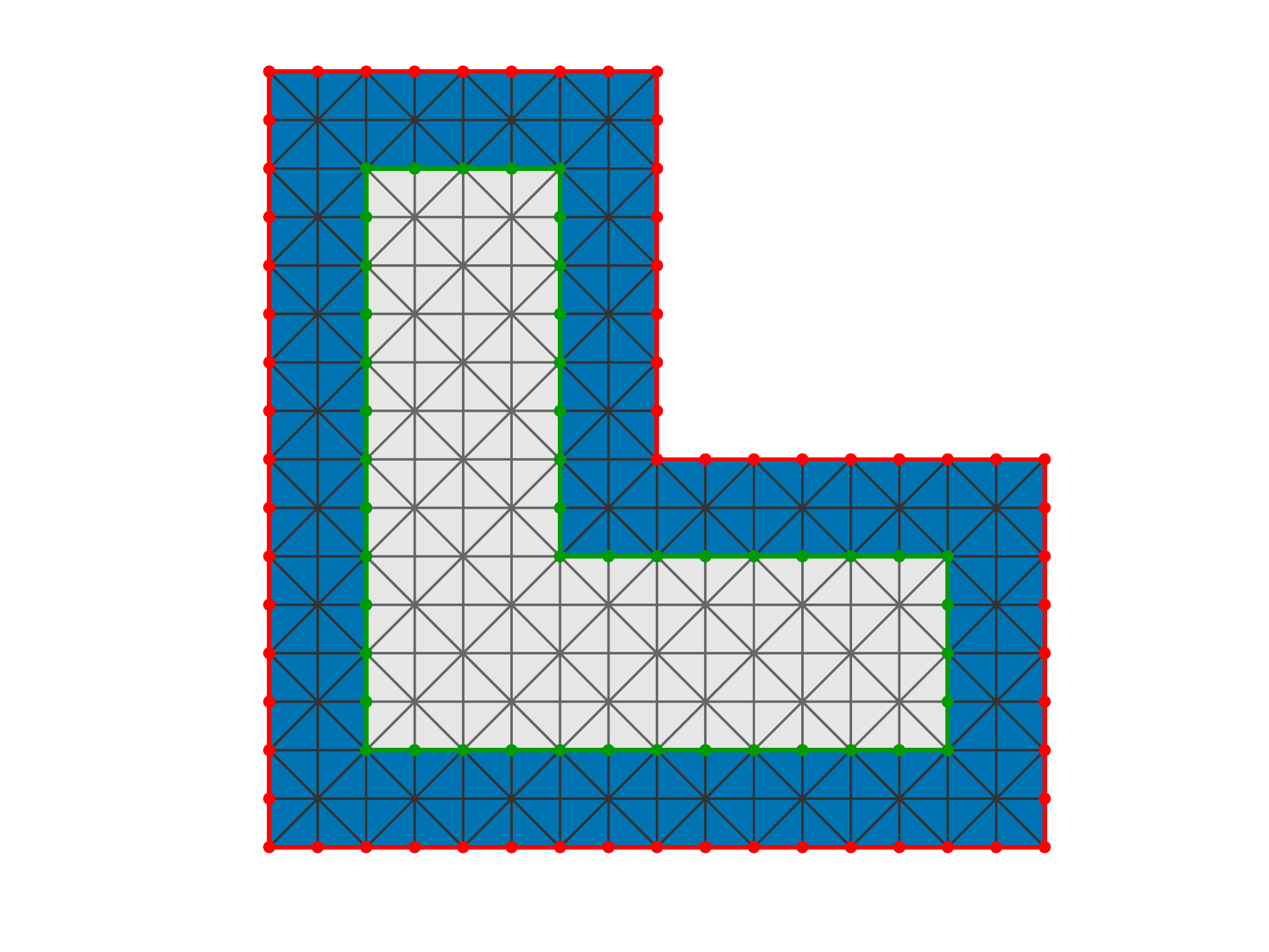}
$\#\TT_h^S = 168$, $\#\FF_h^\Gamma = 64$
\end{subfigure}
\hfill
\begin{subfigure}[c]{.24\textwidth}
\centering\tiny
$\ell = 20$\\[1mm]
\includegraphics[trim={4.1cm 1.1cm 3.4cm 1cm},clip,width=\textwidth]{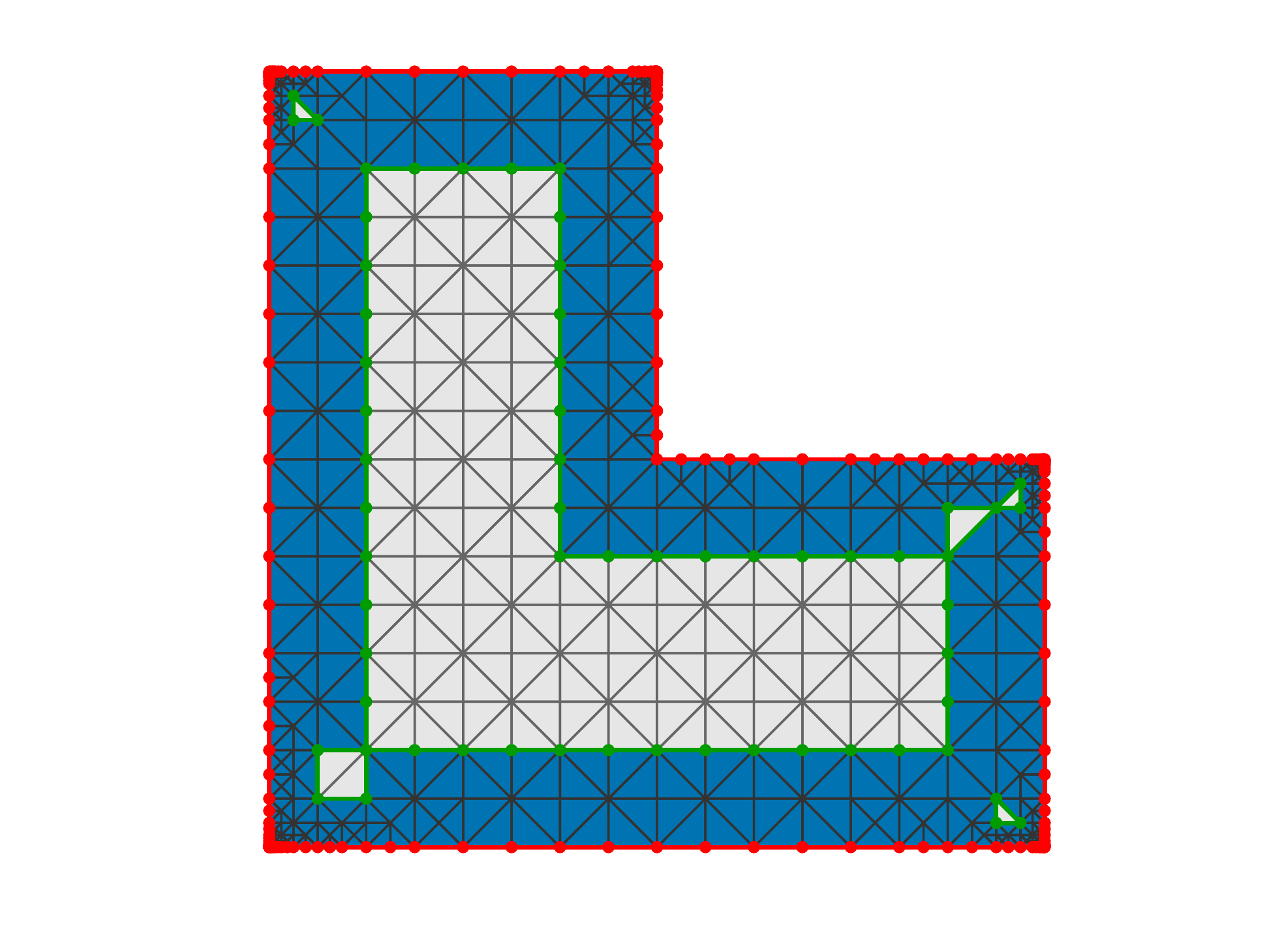}
$\#\TT_{h}^S = 514$, $\#\FF_{h}^\Gamma = 201$
\end{subfigure}
\hfill
\begin{subfigure}[c]{.24\textwidth}
\centering\tiny
$\ell = 26$\\[1mm]
\includegraphics[trim={4.1cm 1.1cm 3.4cm 1cm},clip,width=\textwidth]{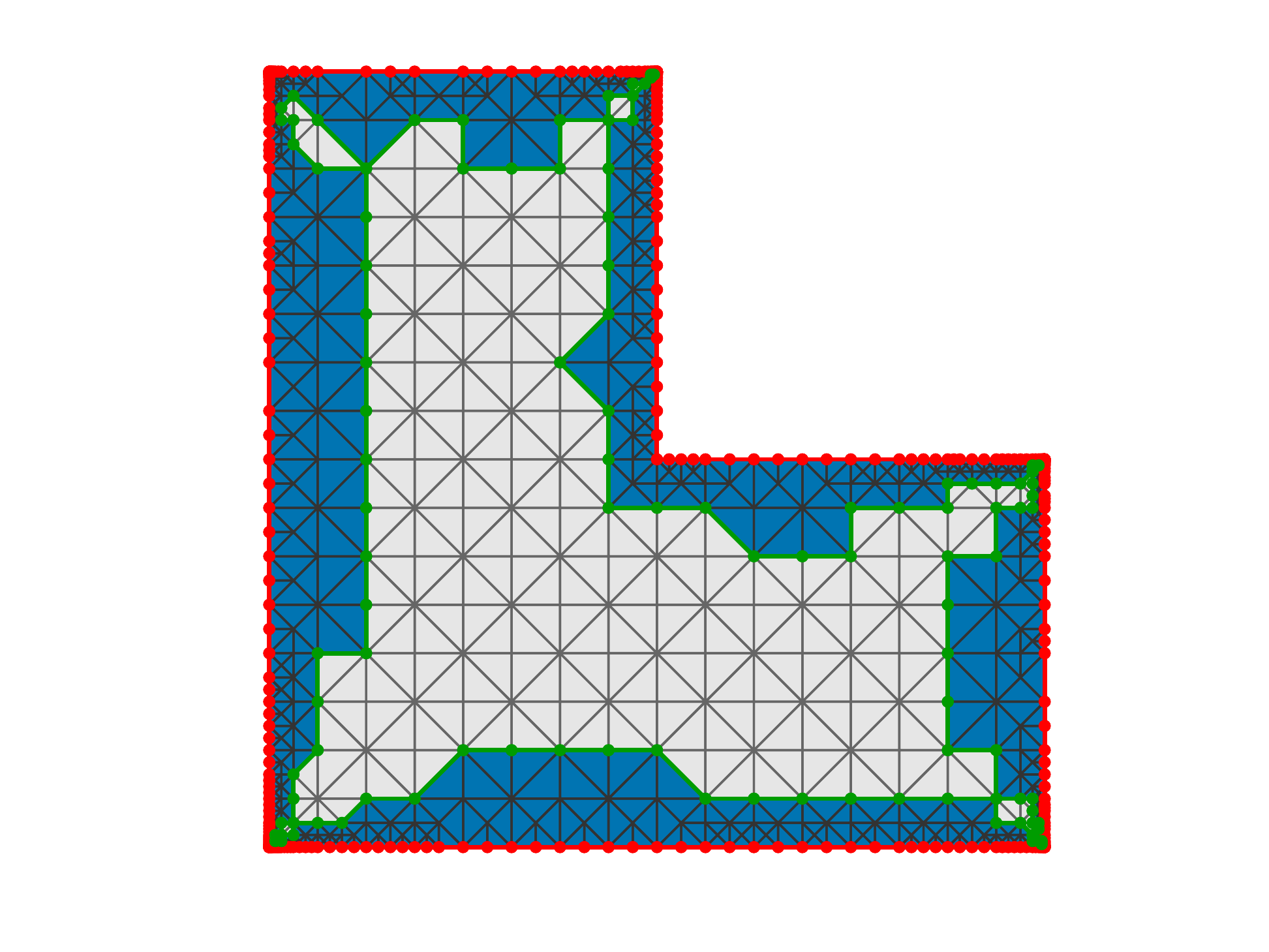}
$\#\TT_{h}^S = 919$, $\#\FF_{h}^\Gamma = 368$
\end{subfigure}
\hfill
\begin{subfigure}[c]{.24\textwidth}
\centering\tiny
$\ell = 38$\\[1mm]
\includegraphics[trim={4.1cm 1.1cm 3.4cm 1cm},clip,width=\textwidth]{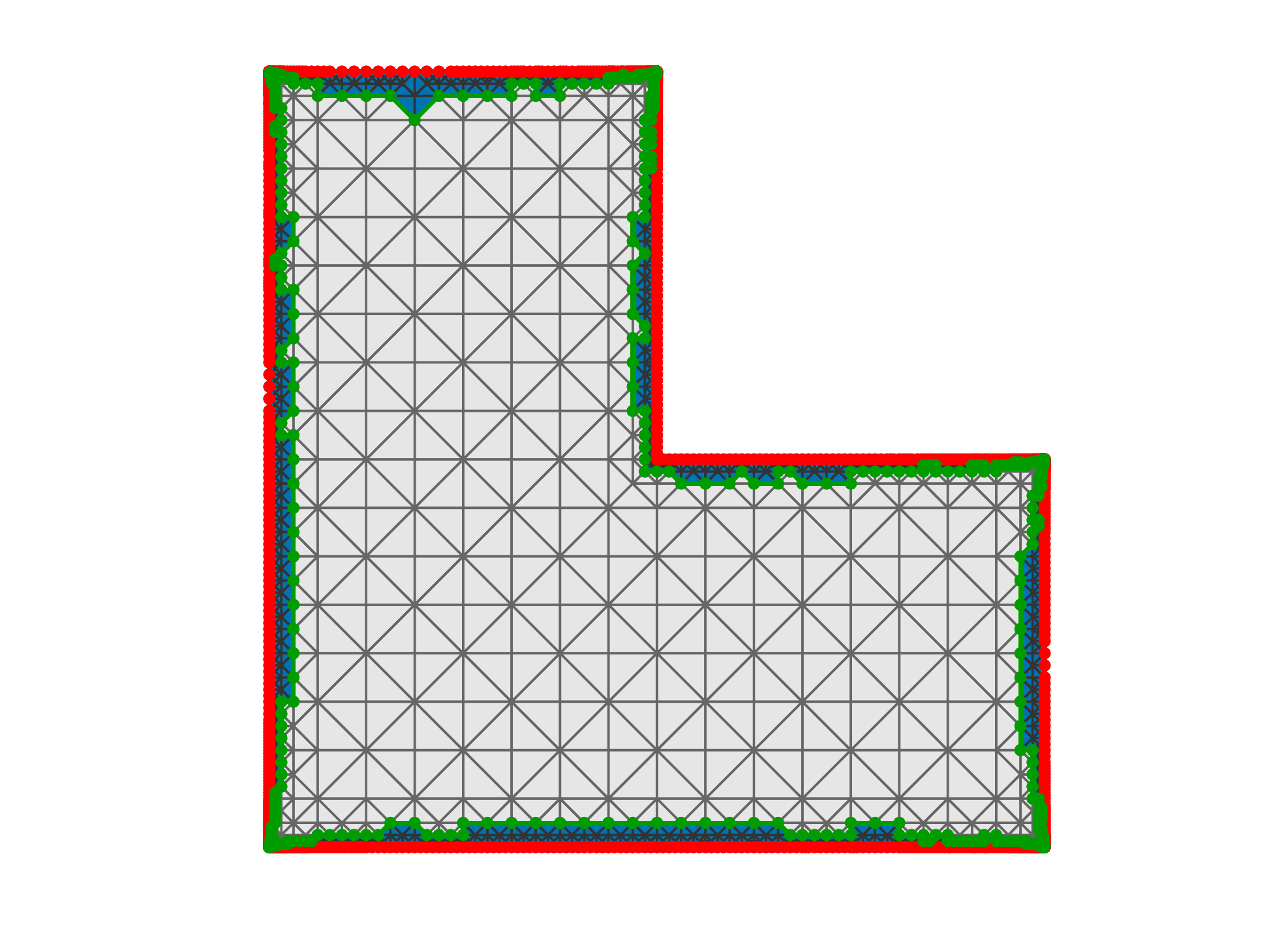}
$\#\TT_{h}^S = 3177$, $\#\FF_{h}^\Gamma = 1216$
\end{subfigure}
\caption{Adaptively generated meshes in Example~\ref{example3} for $p=1$ and $\theta = 0.6$;
 see Figure~\ref{fig:example1:mesh} for the color code.}
\label{fig:example3:mesh}
\end{figure}

\begin{figure}[t]
\centering
\includegraphics[trim={.7cm 0 0 0},clip,width=.49\textwidth]{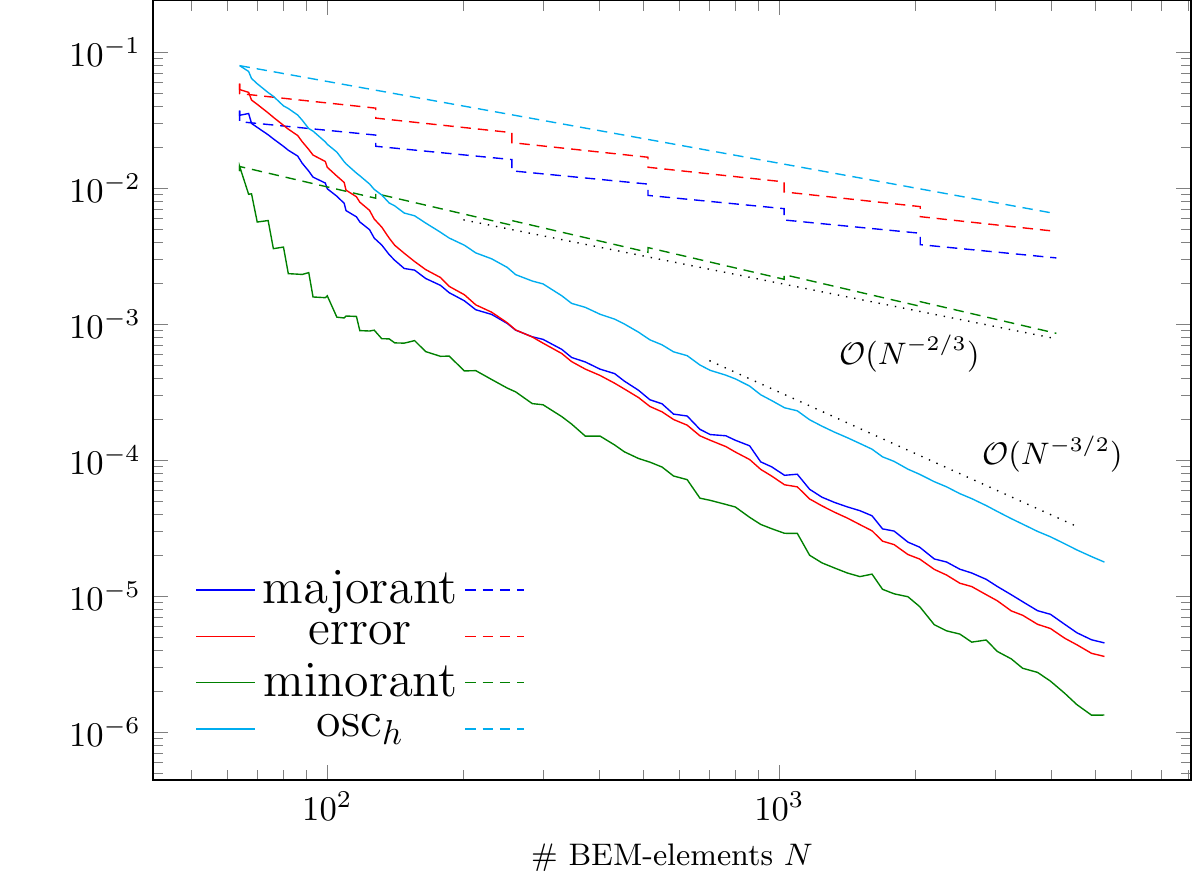}
\includegraphics[trim={.7cm 0 0 0},clip,width=.49\textwidth]{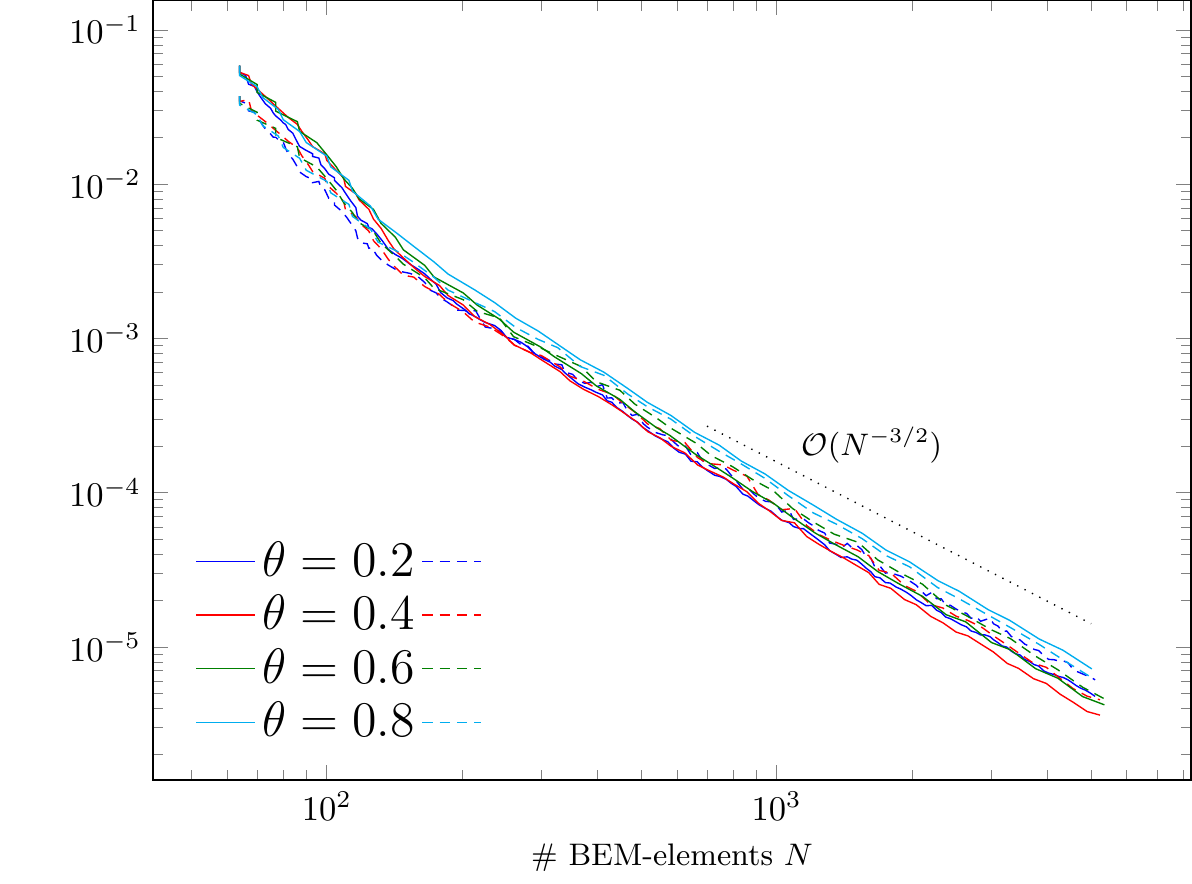}
\caption{Comparison of adaptive vs.\ uniform mesh-refinement in Example~\ref{example3}. The majorant is computed by $\PP^1$-FEM. \emph{Left:} 
We compare the potential error $\norm{\nabla(u-u_h)}{\Lt(\Omega)}$, the majorant $\norm{\nabla w_h}{\Lt(\Omega)}$ from~\eqref{maj:discrete}, the data oscillations ${\rm osc}_h$ from~\eqref{eq:osc}, and the minorant $\mmin(\ttau_h)^{1/2}$ from~\eqref{eq:experiments:minorant} for uniform (dashed) and adaptive mesh-refinement (solid) with $\theta = 0.4$. 
\emph{Right:} We compare the potential error (solid) and the majorant (dashed) for adaptive mesh-refinement for various choices of $\theta$.}
\label{fig:example3}
\end{figure}
\begin{figure}[t]
\centering
\includegraphics[trim={.1cm 0 0 0},clip,width=.49\textwidth]{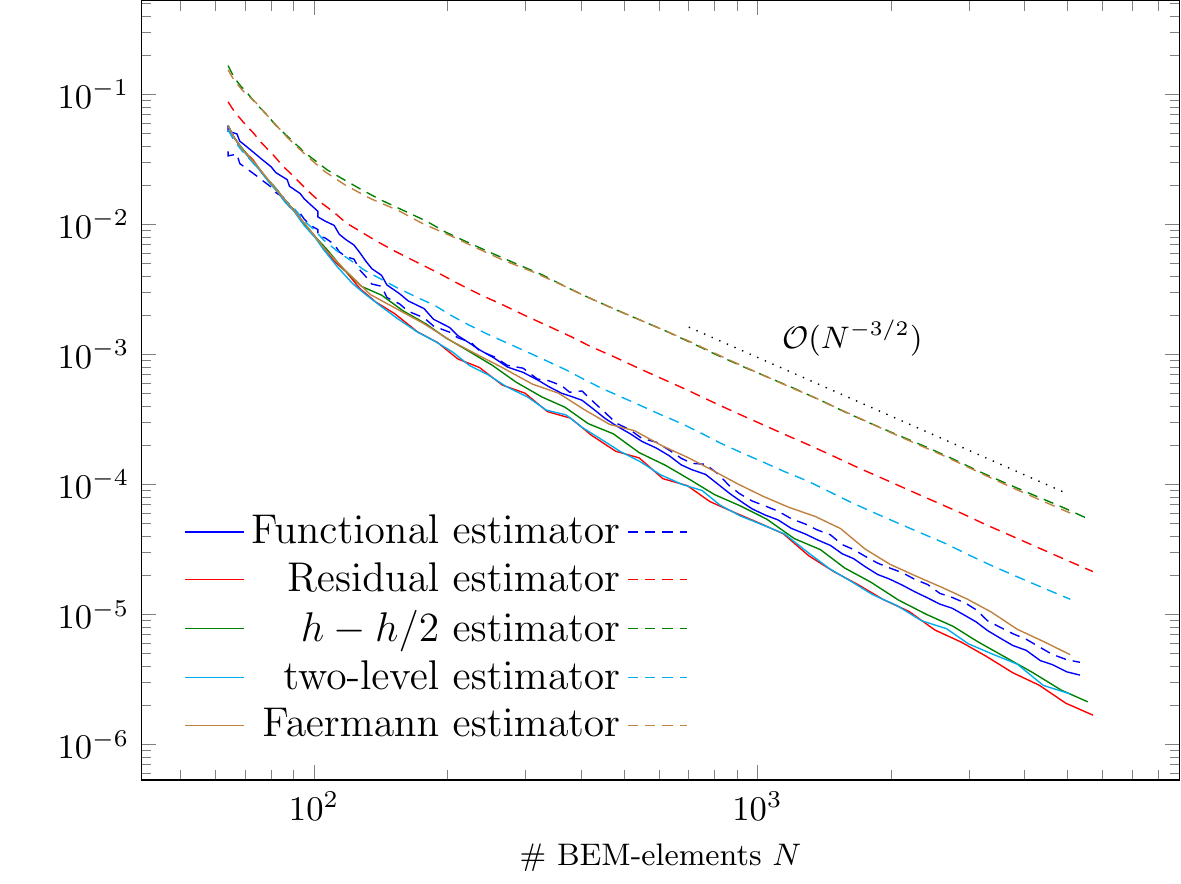}
\includegraphics[trim={.1cm 0 0 0},clip,width=.48\textwidth]{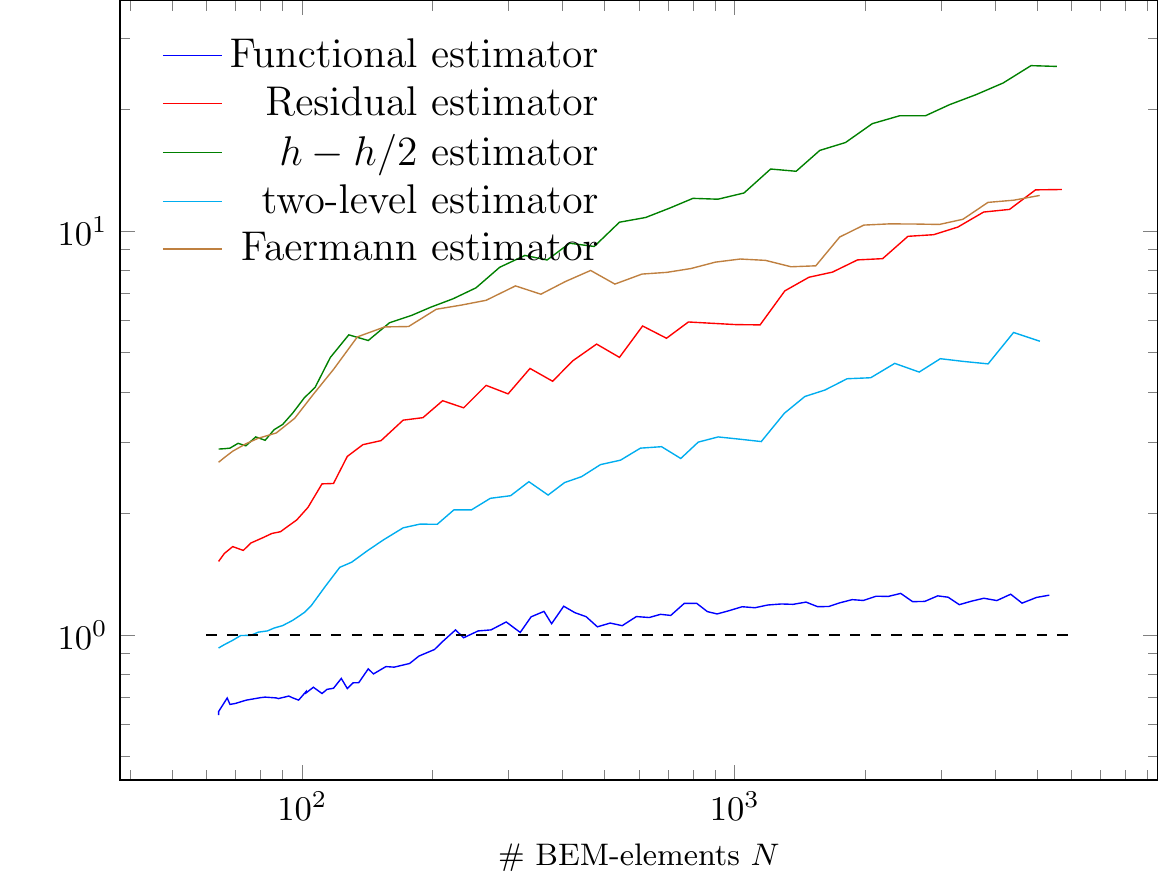}
\caption{\revision{Numerical results for adaptive mesh-refinement ($\theta = 0.4$) in Example~\ref{example3} for different {\sl a~posteriori} BEM error estimators. 
{\em Left:} We plot both the potential error $\norm{\nabla(u-u_h)}{\Lt(\Omega)}$ (solid) as well as the corresponding error estimator (dashed), which drives the adaptive strategy.  
Our functional estimator is the majorant $\norm{\nabla w_h}{\Lt(\Omega)}$ from~\eqref{maj:discrete} based on P1-FEM. {\em Right:} For either estimator $\mu_h$, we plot the quotient $\mu_h / \norm{\nabla(u-u_h)}{\Lt(\Omega)}$ 
to visualize the accuracy of the estimator with respect to the potential error.}}
\label{fig2:example3}
\end{figure}

\example{Non-smooth potential in L-shaped domain}
\label{example3}
%
We consider problem~\eqref{eq:strongform} with prescribed exact solution
\begin{align}\label{eq:example3}
 u(x) = r^{2/3} \cos(2\varphi/3)
 \quad \text{for all } x \in \Omega := (0, 1/2)^2 \backslash \big( [(1/4,1/2]\times[0,1/4] \big)
\end{align}
given in standard polar coordinates $x=x(r,\varphi)$ on the L-shaped domain $\Omega$
with diameter $\diam(\Omega) = \sqrt{1/2}$. We start Algorithm~\ref{algorithm} with an initial triangulation $\TT_0$ of $\Omega$ into $\#\TT_0 = 384$ right triangles.

Unlike Section~\ref{example1} and Section~\ref{example2}, 
the potential $u$ is non-smooth at $(0,0)$.   
The initial meshes as well as some adaptively generated meshes are visualized in Figure~\ref{fig:example3:mesh}.
Numerical convergence results are visualized in Figure~\ref{fig:example3}. Moreover, Table~\ref{table3} provides some empirical values for adaptive mesh-refinement. Our observations are the same as in Section~\ref{example1} and Section~\ref{example2} and underline that the functional error bounds do not rely on any {\sl a~priori} smoothness of the unknown potential $u$: While uniform mesh-refinement leads to a suboptimal convergence behavior, the proposed adaptive strategy regains the optimal convergence rate. 

\revision{Figure~\ref{fig2:example3} provides some estimator competition. We consider the functional error estimator proposed in the present work, the residual estimator $\mu_R$ from~\cite{cs1995,cc1997,cms2001}, the $h-h/2$ error estimator $\mu_H$ from~\cite{MR2529605}, the two-level error estimator $\mu_T$ from~\cite{mundstephanweiss98,hms2001,eh2006}, and Faermann's residual estimator $\mu_F$ from~\cite{MR1752263,MR1930387}. For lowest-order BEM, all these estimators are provided by the {\scshape Matlab} toolbox \textsc{Hilbert}~\cite{hilbert}. We recall that
\begin{align*}
 \mu_T \simeq \mu_H \lesssim \norm{\phi - \phi_h}{\H^{-1/2}(\Gamma)} \simeq \mu_F \lesssim \mu_R,
\end{align*}
where the constants hidden in $\simeq$ and $\lesssim$ depend only on $\Gamma$; see, e.g.,~\cite{cf01,MR2566766}. In addition, we stress that the converse estimate $\norm{\phi - \phi_h}{\H^{-1/2}(\Gamma)} \lesssim \mu_T \simeq \mu_H$ is equivalent to a saturation assumption~\cite{MR2566766}. Moreover, as mentioned earlier, there always holds the bound $\norm{\nabla(u-u_h)}{\Lt(\Omega)} \lesssim \norm{\phi - \phi_h}{\H^{-1/2}(\Gamma)}$, where the hidden constant depends on $\Gamma$. We consider Algorithm~\ref{algorithm} (with $\theta = 0.4$), where instead of $\eta_h(T)$ from~\eqref{eq:estimator}, we use 
$\eta_h(T)^2 := \sum_{F \in \FF_h^\Gamma, F \subset T} \mu_h(F)^2$, where $\mu_h(F)$ denote the local contributions of $\mu_h \in \{\mu_R, \mu_H, \mu_T, \mu_F\}$. Figure~\ref{fig2:example3} provides the numerical results. All adaptive strategies yield optimal decay $\norm{\nabla(u-u_h)}{\Lt(\Omega)} = \OO(N^{-3/2})$ with $N = \#\FF_h^\Gamma$. At the same time, we also see that the proposed functional estimator provides the most accurate bound on the potential error $\norm{\nabla(u-u_h)}{\Lt(\Omega)}$.}

\section{Extension of the analysis}
\label{section:extensions}

So far, we have considered functional {\sl a~posteriori} error estimation for an indirect BEM formulation~\eqref{eq:weakform} discretized by Galerkin BEM~\eqref{eq:discreteweakform}. The following sections address some obvious extensions of our analysis. While the subsequent numerical experiments (as well as those from Section~\ref{section:numerics}) focus on $d = 2$, we again stress that the theoretical results also apply to arbitrary dimensions, in particular to $d = 3$. 
However, 3D experiments are beyond the scope of this work and left to future research. 

\subsection{Collocation BEM}
%
It is worth noting that all results of Section~\ref{section:approach} hold, in particular, for any $v = \widetilde V\phi_h$ with arbitrary $\phi_h \in \H^{-1/2}(\Gamma)$. Consequently, the computable bounds of Theorem~\ref{cor:continuousupperbound} (resp.\ Corollary~\ref{cor:continuouslowerboundtwo})
hold for any approximation $\phi_h \approx \phi$. In particular, Algorithm~\ref{algorithm} can also be applied to (e.g., lowest-order) collocation BEM, where $\phi_h \in \PP^0(\FF_h^\Gamma)$ is determined by collocation conditions
\begin{align}
 (V\phi_h)(x_F) = g(x_F)
 \quad \text{for all } F \in \FF_h^\Gamma,
\end{align}
where $x_F \in F$ is an appropriate collocation node (e.g., the center of mass). 
We stress that well-posedness of collocation BEM is non-obvious (see, e.g.,~\cite{MR1122060,MR1279236,MR1414417}). 
However, this does not affect our developed functional {\sl a~posteriori} error bounds.

\begin{table}[t]
\begin{tabular}{|c||c|c|c|c|c|c|c|}
\hline
$\ell$ & $\# \FF^\Gamma_h$ & $\frac{\# \TT^S_h}{\# \FF^\Gamma_h}$ & \footnotesize${\rm dof}(\TT_h^S)$ & \footnotesize$\norm{\nabla (u-u_h)}{\Lt(\Omega)}$ & \footnotesize$\norm{\nabla w_h}{\Lt(S)}$ & \footnotesize$\frac{\norm{\nabla w_h}{\Lt(S)}}{\norm{\nabla(u-u_h)}{\Lt(\Omega)}}$ & $\footnotesize\frac{\norm{\grad w_h}{\Lt(S)}}{\mmin(\ttau_h)^{1/2}}$ \\ \hline\hline
$0$ & $64$ & $2.63$ & $33$ & $5.87 e-2$ & $3.72 e-2$ & $0.64$ & $2.79$ \\ \hline
$6$ & $76$ & $2.57$ & $39$ & $3.32 e-2$ & $2.30 e-2$ & $0.69$ & $6.41$ \\ \hline
$12$ & $93$ & $2.62$ & $46$ & $1.75 e-2$ & $1.21 e-2$ & $0.69$ & $7.60$ \\ \hline
$18$ & $116$ & $2.60$ & $81$ & $8.65 e-3$ & $6.16 e-3$ & $0.71$ & $5.40$ \\ \hline
$24$ & $141$ & $2.68$ & $132$ & $3.80 e-3$ & $2.95 e-3$ & $0.78$ & $4.04$ \\ \hline
$30$ & $201$ & $2.68$ & $215$ & $1.65 e-3$ & $1.49 e-3$ & $0.90$ & $3.28$ \\ \hline
$36$ & $300$ & $2.65$ & $336$ & $7.26 e-4$ & $7.74 e-4$ & $1.07$ & $3.02$ \\ \hline
$42$ & $454$ & $2.59$ & $491$ & $3.34 e-4$ & $3.82 e-4$ & $1.14$ & $3.31$ \\ \hline
$48$ & $667$ & $2.62$ & $715$ & $1.51 e-4$ & $1.69 e-4$ & $1.12$ & $3.21$ \\ \hline
$54$ & $961$ & $2.60$ & $1023$ & $7.66 e-5$ & $8.95 e-5$ & $1.17$ & $2.85$ \\ \hline
$60$ & $1412$ & $2.56$ & $1447$ & $3.76 e-5$ & $4.55 e-5$ & $1.21$ & $3.06$ \\ \hline
$66$ & $2042$ & $2.57$ & $2048$ & $1.88 e-5$ & $2.30 e-5$ & $1.22$ & $2.73$ \\ \hline
$72$ & $3031$ & $2.53$ & $2927$ & $9.28 e-6$ & $1.18 e-5$ & $1.27$ & $3.00$ \\ \hline
$78$ & $4548$ & $2.51$ & $4283$ & $4.40 e-6$ & $4.80 e-6$ & $1.23$ & $3.38$ \\ \hline
$80$ & $5232$ & $2.49$ & $4835$ & $3.61 e-6$ & $4.54 e-6$ & $1.26$ & $3.39$ \\ \hline
\end{tabular}
\caption{Adaptive mesh-refinement with $\theta = 0.4$ in Example~\ref{example3}. We focus on the degrees of freedom, the potential error $\norm{\nabla(u-u_h)}{\Lt(\Omega)}$, the accuracy of the $\PP^1$-FEM majorant $\norm{\nabla w_h}{\Lt(\Omega)}$ from~\eqref{maj:discrete}, and the quotient of the majorant and minorant.}
\label{table3}
\end{table}

\subsection{Other BEM ansatz spaces}
%
With the same argument as for collocation BEM, one can replace the discrete BEM ansatz space $\PP^0(\FF_h^\Gamma) \ni \phi_h$ by an arbitrary discrete space $\PP_h \subseteq \H^{-1/2}(\Gamma)$ (e.g., higher-order piecewise polynomials, splines, isogeometric NURBS, etc.). For $r \in \N_0$ and $\PP_h = \PP^r(\FF_h^\Gamma)$, we expect that the choices $p = r + 1$ and $q = r$ will lead to accurate 
computable upper and lower bounds in Theorem~\ref{cor:continuousupperbound}.
The numerical validation of this expectation is, however, beyond the scope of the present work.
\begin{figure}[t]
\centering
\includegraphics[trim={.7cm 0 0 0},clip,width=.49\textwidth]{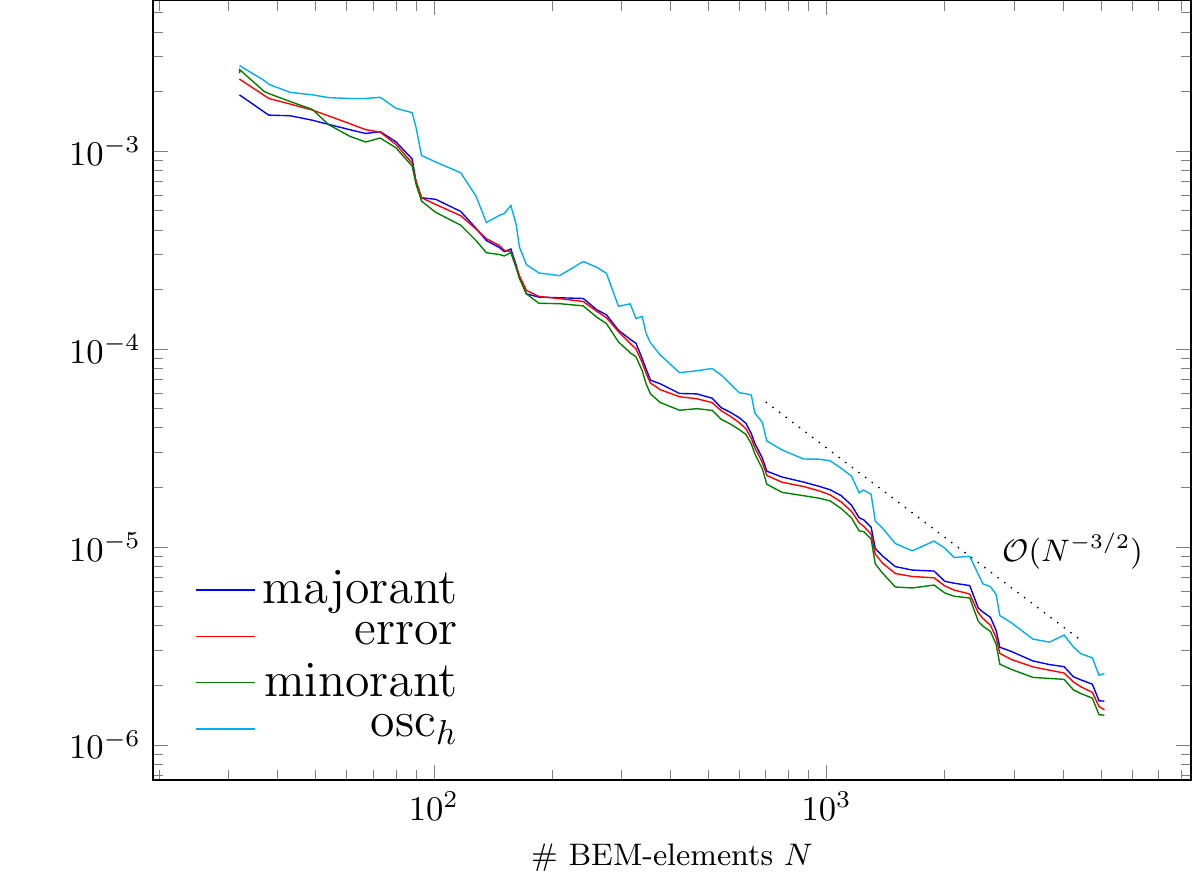}
\includegraphics[trim={.7cm 0 0 0},clip,width=.49\textwidth]{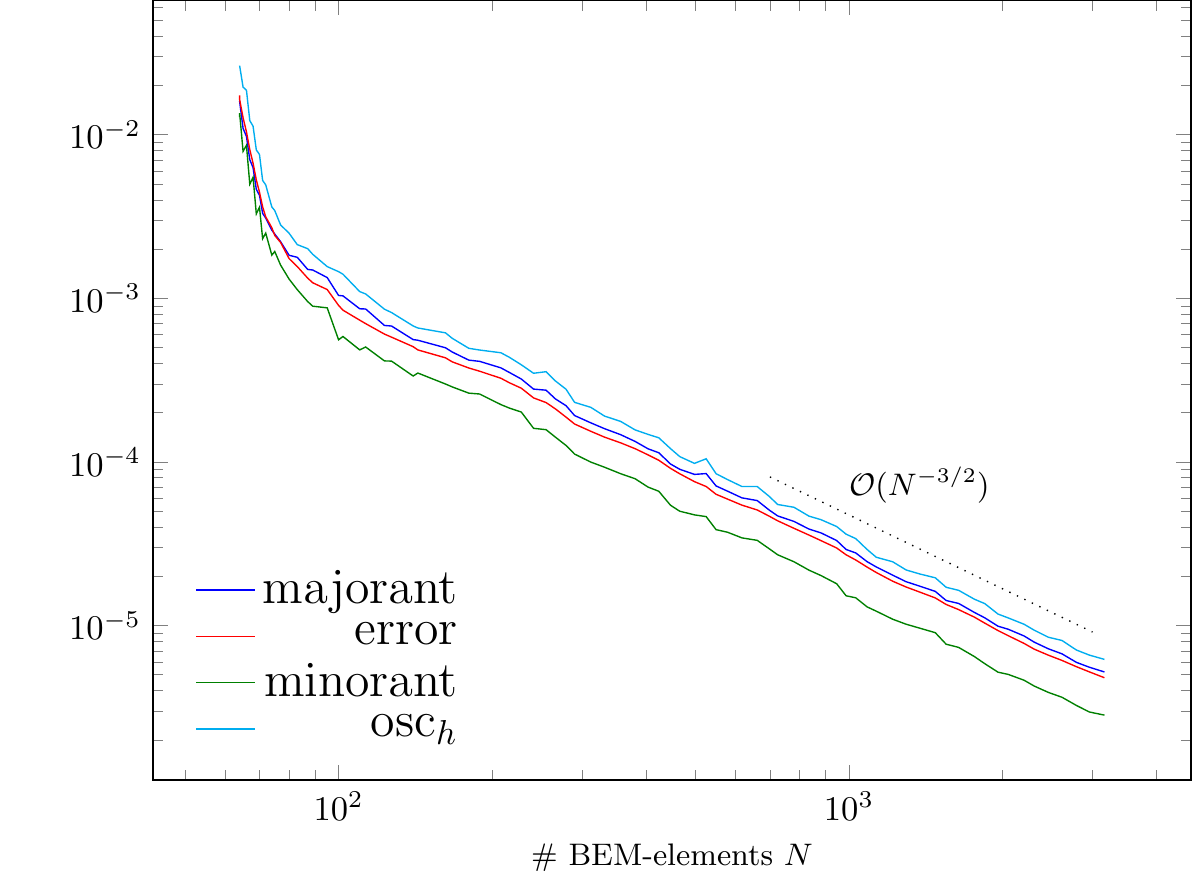}
\caption{Numerical results for adaptive mesh-refinement ($\theta = 0.4$) in Example~\ref{example5} (left) and Example~\ref{example6} (right). We plot the potential error $\norm{\nabla(u-u_h)}{\Lt(\Omega)}$, the majorant $\norm{\nabla w_h}{\Lt(\Omega)}$ from~\eqref{maj:discrete} based on $\PP^1$-FEM, the data oscillations ${\rm osc}_h$ from~\eqref{eq:osc:direct}, and the minorant $\mmin(\ttau_h)^{1/2}$ from~\eqref{eq:experiments:minorant}.}
\label{fig:example5-6}
\end{figure}
\begin{table}[th]
\begin{tabular}{|c||c|c|c|c|c|c|c|}
\hline
$\ell$ & $\# \FF^\Gamma_h$ & $\frac{\# \TT^S_h}{\# \FF^\Gamma_h}$ & \footnotesize${\rm dof}(\TT_h^S)$ & \footnotesize$\norm{\nabla (u-u_h)}{\Lt(\Omega)}$ & \footnotesize$\norm{\nabla w_h}{\Lt(S)}$ & \footnotesize$\frac{\norm{\nabla w_h}{\Lt(S)}}{\norm{\nabla(u-u_h)}{\Lt(\Omega)}}$ & $\footnotesize\frac{\norm{\grad w_h}{\Lt(S)}}{\mmin(\ttau_h)^{1/2}}$ \\ \hline\hline
$0$ & $32$ & $2.25$ & $15$ & $2.30 e-3$ & $1.92 e-3$ & $0.83$ & $0.78$ \\ \hline
$5$ & $49$ & $2.61$ & $54$ & $1.61 e-3$ & $1.43 e-3$ & $0.89$ & $0.88$ \\ \hline
$11$ & $88$ & $2.76$ & $110$ & $8.68 e-4$ & $9.12 e-4$ & $1.05$ & $1.09$ \\ \hline
$17$ & $147$ & $2.79$ & $161$ & $3.61 e-4$ & $3.53 e-4$ & $0.98$ & $1.15$ \\ \hline
$23$ & $172$ & $2.76$ & $203$ & $1.98 e-4$ & $1.90 e-4$ & $0.96$ & $1.00$ \\ \hline
$29$ & $295$ & $2.93$ & $387$ & $1.23 e-4$ & $1.25 e-4$ & $1.02$ & $1.15$ \\ \hline
$35$ & $377$ & $2.85$ & $442$ & $6.23 e-5$ & $6.68 e-5$ & $1.07$ & $1.24$ \\ \hline
$41$ & $599$ & $2.89$ & $743$ & $4.25 e-5$ & $4.50 e-5$ & $1.06$ & $1.15$ \\ \hline
$47$ & $770$ & $2.88$ & $901$ & $2.12 e-5$ & $2.26 e-5$ & $1.06$ & $1.20$ \\ \hline
$53$ & $1210$ & $2.93$ & $1533$ & $1.33 e-5$ & $1.40 e-5$ & $1.06$ & $1.16$ \\ \hline
$59$ & $1652$ & $2.96$ & $2005$ & $7.09 e-6$ & $7.64 e-6$ & $1.08$ & $1.23$ \\ \hline
$65$ & $2500$ & $2.99$ & $3228$ & $4.36 e-6$ & $4.69 e-6$ & $1.08$ & $1.18$ \\ \hline
$71$ & $3696$ & $2.91$ & $4988$ & $2.38 e-6$ & $2.55 e-6$ & $1.07$ & $1.18$ \\ \hline
$77$ & $5099$ & $2.99$ & $6483$ & $1.50 e-6$ & $1.66 e-6$ & $1.11$ & $1.18$ \\ \hline
\end{tabular}
\caption{Adaptive mesh-refinement with $\theta = 0.4$ in Example~\ref{example5}. We focus on the degrees of freedom, the potential error $\norm{\nabla(u-u_h)}{\Lt(\Omega)}$, the accuracy of the $\PP^1$-FEM majorant $\norm{\nabla w_h}{\Lt(\Omega)}$ from~\eqref{maj:discrete}, and the quotient of the majorant and minorant.}
\label{table4}
\end{table}
\begin{table}[th]
\begin{tabular}{|c||c|c|c|c|c|c|c|}
\hline
$\ell$ & $\# \FF^\Gamma_h$ & $\frac{\# \TT^S_h}{\# \FF^\Gamma_h}$ & \footnotesize${\rm dof}(\TT_h^S)$ & \footnotesize$\norm{\nabla (u-u_h)}{\Lt(\Omega)}$ & \footnotesize$\norm{\nabla w_h}{\Lt(S)}$ & \footnotesize$\frac{\norm{\nabla w_h}{\Lt(S)}}{\norm{\nabla(u-u_h)}{\Lt(\Omega)}}$ & $\footnotesize\frac{\norm{\grad w_h}{\Lt(S)}}{\mmin(\ttau_h)^{1/2}}$ \\ \hline\hline
$0$ & $64$ & $2.63$ & $33$ & $1.74 e-2$ & $1.61 e-2$ & $0.93$ & $0.74$ \\ \hline
$6$ & $69$ & $2.62$ & $37$ & $5.30 e-3$ & $4.64 e-3$ & $0.88$ & $0.80$ \\ \hline
$12$ & $77$ & $2.69$ & $42$ & $2.20 e-3$ & $2.22 e-3$ & $1.01$ & $0.95$ \\ \hline
$18$ & $100$ & $2.57$ & $46$ & $9.06 e-4$ & $1.04 e-3$ & $1.15$ & $1.21$ \\ \hline
$24$ & $140$ & $2.50$ & $79$ & $5.07 e-4$ & $5.60 e-4$ & $1.10$ & $1.16$ \\ \hline
$30$ & $208$ & $2.58$ & $159$ & $3.25 e-4$ & $3.76 e-4$ & $1.16$ & $1.16$ \\ \hline
$36$ & $279$ & $2.61$ & $248$ & $1.88 e-4$ & $2.21 e-4$ & $1.18$ & $1.20$ \\ \hline
$42$ & $404$ & $2.59$ & $381$ & $1.10 e-4$ & $1.20 e-4$ & $1.09$ & $1.19$ \\ \hline
$48$ & $549$ & $2.62$ & $531$ & $6.35 e-5$ & $7.14 e-5$ & $1.12$ & $1.30$ \\ \hline
$54$ & $780$ & $2.67$ & $790$ & $3.93 e-5$ & $4.33 e-5$ & $1.10$ & $1.23$ \\ \hline
$60$ & $1085$ & $2.67$ & $1131$ & $2.28 e-5$ & $2.46 e-5$ & $1.08$ & $1.33$ \\ \hline
$66$ & $1550$ & $2.73$ & $1695$ & $1.35 e-5$ & $1.42 e-5$ & $1.05$ & $1.29$ \\ \hline
$72$ & $2203$ & $2.71$ & $2372$ & $7.78 e-6$ & $7.92 e-6$ & $1.02$ & $1.32$ \\ \hline
$78$ & $3166$ & $2.74$ & $3442$ & $4.80 e-6$ & $5.20 e-6$ & $1.08$ & $1.29$ \\ \hline
\end{tabular}
\caption{Adaptive mesh-refinement with $\theta = 0.4$ in Example~\ref{example6}. We focus on the degrees of freedom, the potential error $\norm{\nabla(u-u_h)}{\Lt(\Omega)}$, the accuracy of the $\PP^1$-FEM majorant $\norm{\nabla w_h}{\Lt(\Omega)}$ from~\eqref{maj:discrete}, and the quotient of the majorant and minorant.}
\label{table5}
\end{table}

%
\subsection{Direct BEM approach}
%
The indirect BEM approach makes ansatz~\eqref{eq:weakform} for the unknown solution of~\eqref{eq:strongform0}. Unlike this, the direct BEM approach is based on the Green's third identity: 
Any solution of~\eqref{eq:strongform0} can be written as the sum of a single-layer and a double-layer potential, i.e.,
\begin{align}\label{eq:green3}
 u(x) &= [\widetilde V\phi](x) - [\widetilde K g](x) 
 := [\widetilde V\phi](x) - \int_{\Gamma} \partial_{\normal(y)} G(x-y) \, g(y) \d{y}
 \quad \text{for all } x\in\Omega,
\end{align}
where $g = \str{u}{\Gamma} \in \H^{1/2}(\Gamma)$ is the trace of $u$ (i.e., the Dirichlet data) and $\phi = \ntr{\nabla u}{\Gamma} \in \H^{-1/2}(\Gamma)$ is the normal derivative (i.e., the Neumann data). Taking the trace of this identity and respecting the jump properties of the double-layer potential (see, e.g.,~\cite{mclean,steinbach,hsiao-wendland,sauter-schwab,gwinner-stephan}), one sees that
\begin{align*}
 g = V\phi - (K - 1/2) g 
 \quad \text{in } H^{1/2}(\Gamma),
\end{align*}
where $K$ formally coincides with $\widetilde K$, but is evaluated for $x \in \Gamma$ instead.
Elementary calculations then lead to the variational formulation
\begin{align}
\label{eq:weakform:direct}
\dualpga{V\phi}{\psi}
=\dualpga{(K+1/2) g}{\psi}
\quad\text{for all }\psi\in\H^{-1/2}(\Gamma).
\end{align}
We stress that the factor $1/2$ is only valid almost everywhere on $\Gamma$ and hence correct for the variational formulation and Galerkin BEM, while collocation BEM would require a modification at corners (and additionally along edges in 3D); see~\cite{mclean,steinbach,hsiao-wendland}.

Usual implementations approximate $g \approx g_h \in \SSS^p(\FF_h^\Gamma)$ so that the integral operators in~\eqref{eq:weakform:direct} are only evaluated for discrete functions. Overall, the lowest-order Galerkin BEM formulation then reads 
\begin{align}\label{eq:galerkin:direct}
\scp{V \phi_h}{\psi_h}{\Lt(\Gamma)} 
= \scp{(K+1/2) \, g_h}{\psi_h}{\Lt(\Gamma)} 
\quad \textit{for all } \psi_h\in \PP^0(\FF^\Gamma_h).
\end{align}
As above, the Lax--Milgram lemma proves that~\eqref{eq:weakform:direct} (resp.~\eqref{eq:galerkin:direct}) admit unique solutions $\phi \in \H^{-1/2}(\Gamma)$ (resp.\ $\phi_h \in \PP^0(\FF^\Gamma_h)$). Moreover, the computed density $\phi_h$ is now indeed an approximation of the Neumann data $\ntr{\nabla u}{\Gamma} = \str{\partial_{\normal} u}{\Gamma} = \phi \approx \phi_h$. Defining
\begin{align}\label{eq:green3:discrete}
 u_h(x) &= [\widetilde V\phi_h](x) - [\widetilde K g_h](x)
 \quad \text{for } x \in \Omega,
\end{align}
one obtains an approximation $u_h$ of the solution $u = \widetilde V\phi - \widetilde K g$ of~\eqref{eq:strongform0} (resp.~\eqref{eq:green3}). We stress that $u_h|_\Gamma = V\phi_h + (1/2 - K) g_h$ so that the data oscillation term in the upper bound of 
Theorem~\ref{cor:continuousupperbound} reads
\begin{subequations}\label{eq:osc:direct}
\begin{align}
\nonumber
 \norm{\nabla(u-u_h)}{\Lt(\Omega)} 
 &\le \!\!\!\!\min_{\substack{w \in \Ho(\Omega) \\ w|_\Gamma = J_h(g-u_h|_\Gamma)}} \!\!\!\!\! \norm{\nabla w}{\Lt(\Omega)}
 + \norm{(1-J_h) \big(g - V\phi_h - (1/2 - K)g_h\big)}{\H^{1/2}(\Gamma)}
 \\&
 \le \!\!\!\!\min_{\substack{w \in \Ho(\Omega) \\ w|_\Gamma = J_h(g-u_h|_\Gamma)}} \!\!\!\!\! \norm{\nabla w}{\Lt(\Omega)}
 + \Cosc^2 \, {\rm osc}_h,
 \intertext{where $J_h : \Lt(\Gamma) \to \SSS^1(\FF_h^\Gamma)$ is the $\Lt(\Gamma)$-orthogonal projection and}
 {\rm osc}_h &:= \norm{h^{1/2}\nabla_\Gamma \big[ (1-J_h) \big(g - V\phi_h - (1/2 - K)g_h\big)\big]}{\Lt(\Gamma)};
\end{align}
\end{subequations}
see Section~\ref{sec:oscillations}. In our implementation, we also employed $g_h = J_h g \in \SSS^1(\FF_h^\Gamma)$.

\example{Direct BEM for smooth potential in square domain}
\label{example5}
%
We consider the setting~\eqref{eq:example1} from Section~\ref{example1}. Applying the direct BEM approach~\eqref{eq:weakform:direct}, we know that $\phi_h \approx \phi = \ntr{\nabla u}{\Gamma}$, where $\phi$ (as well as the potential $u$) is smooth. In this particular situation, we know that uniform mesh-refinement would already lead to the optimal convergence behavior (not displayed). The same is 
observed for the proposed adaptive strategy, where we even observe that the majorant $\norm{\nabla w_h}{\Lt(\Omega)}$ from~\eqref{maj:discrete} as well as the minorant $\mmin(\ttau_h)^{1/2}$ from~\eqref{eq:experiments:minorant} provide sharp error bounds for the potential error $\norm{\nabla(u-u_h)}{\Lt(\Omega)}$; see Figure~\ref{fig:example5-6} (left) as well as Table~\ref{table4}.

\example{Direct BEM for non-smooth potential in L-shaped domain}
\label{example6}
%
We consider the setting~\eqref{eq:example3} from Section~\ref{example3}. Applying the direct BEM approach~\eqref{eq:weakform:direct}, we know that $\phi_h \approx \phi = \ntr{\nabla u}{\Gamma}$, where $\phi$ (as well as the potential $u$) is only non-smooth with a singularity at $(0,0)$. Also for this case, the proposed adaptive strategy regains the optimal convergence rate; see Figure~\ref{fig:example5-6} (right) as well as Table~\ref{table5}. Even though the quotient $\norm{\nabla w_h}{\Lt(\Omega)} / \mmin(\ttau_h)^{1/2}$ of the computable upper and lower bound is larger than for the smooth problem of Section~\ref{example5}, we observe that the lower bound is, in fact, 
much more accurate for the direct BEM than for the indirect BEM computations from Section~\ref{section:numerics}.

%
\begin{figure}[t]
\begin{subfigure}[c]{.24\textwidth}
\centering\tiny
$\ell = 0$\\[1mm]
\includegraphics[trim={4.1cm 1.1cm 3.4cm 1cm},clip,width=\textwidth]{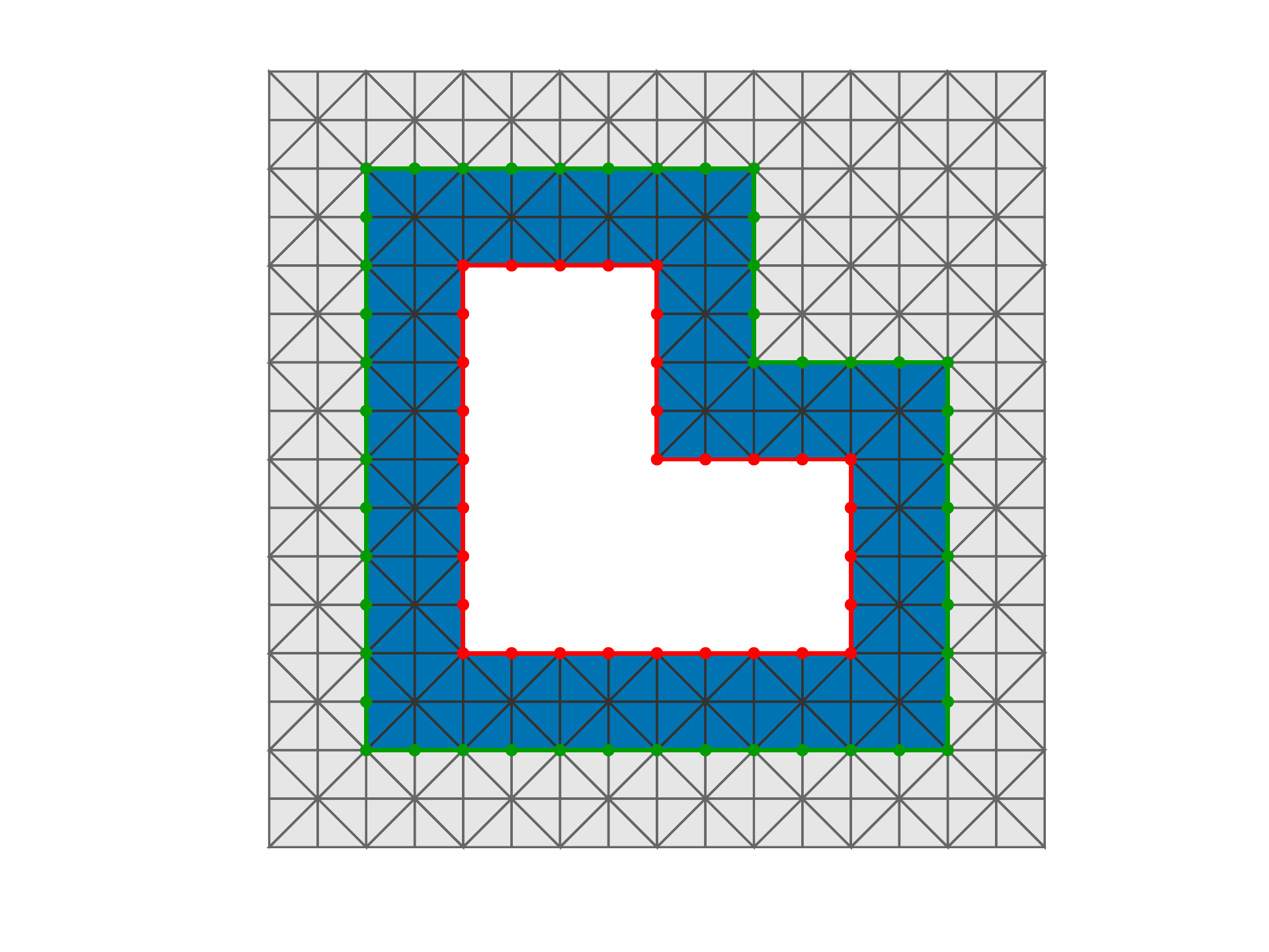}
$\#\TT_h^S = 120$, $\#\FF_h^\Gamma = 32$
\end{subfigure}%
\hfill
\begin{subfigure}[c]{.24\textwidth}
\centering\tiny
$\ell = 13$\\[1mm]
\includegraphics[trim={4.1cm 1.1cm 3.4cm 1cm},clip,width=\textwidth]{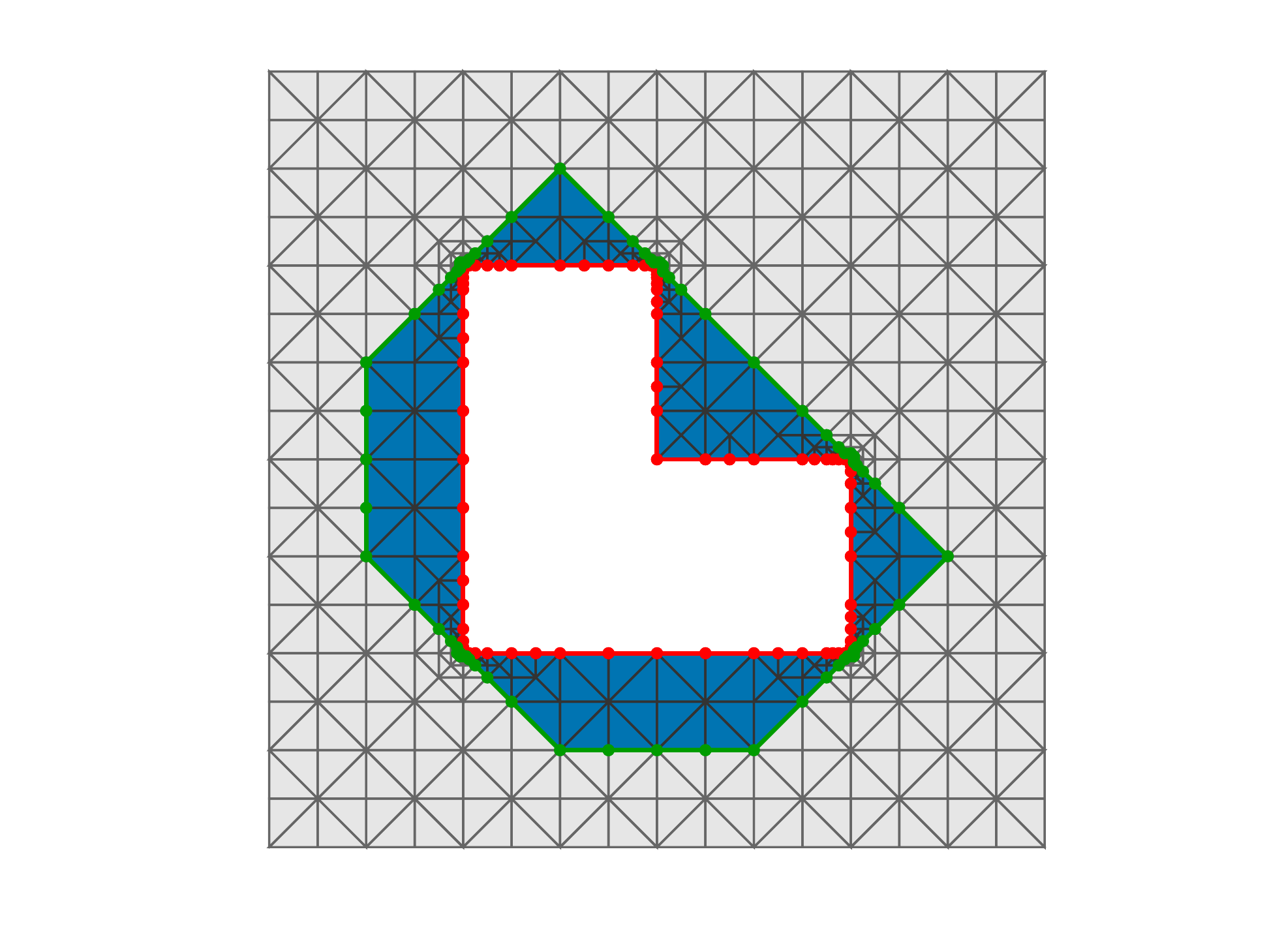}
$\#\TT_{h}^S = 347$, $\#\FF_{h}^\Gamma = 113$
\end{subfigure}
\hfill
\begin{subfigure}[c]{.24\textwidth}
\centering\tiny
$\ell = 27$\\[1mm]
\includegraphics[trim={4.1cm 1.1cm 3.4cm 1cm},clip,width=\textwidth]{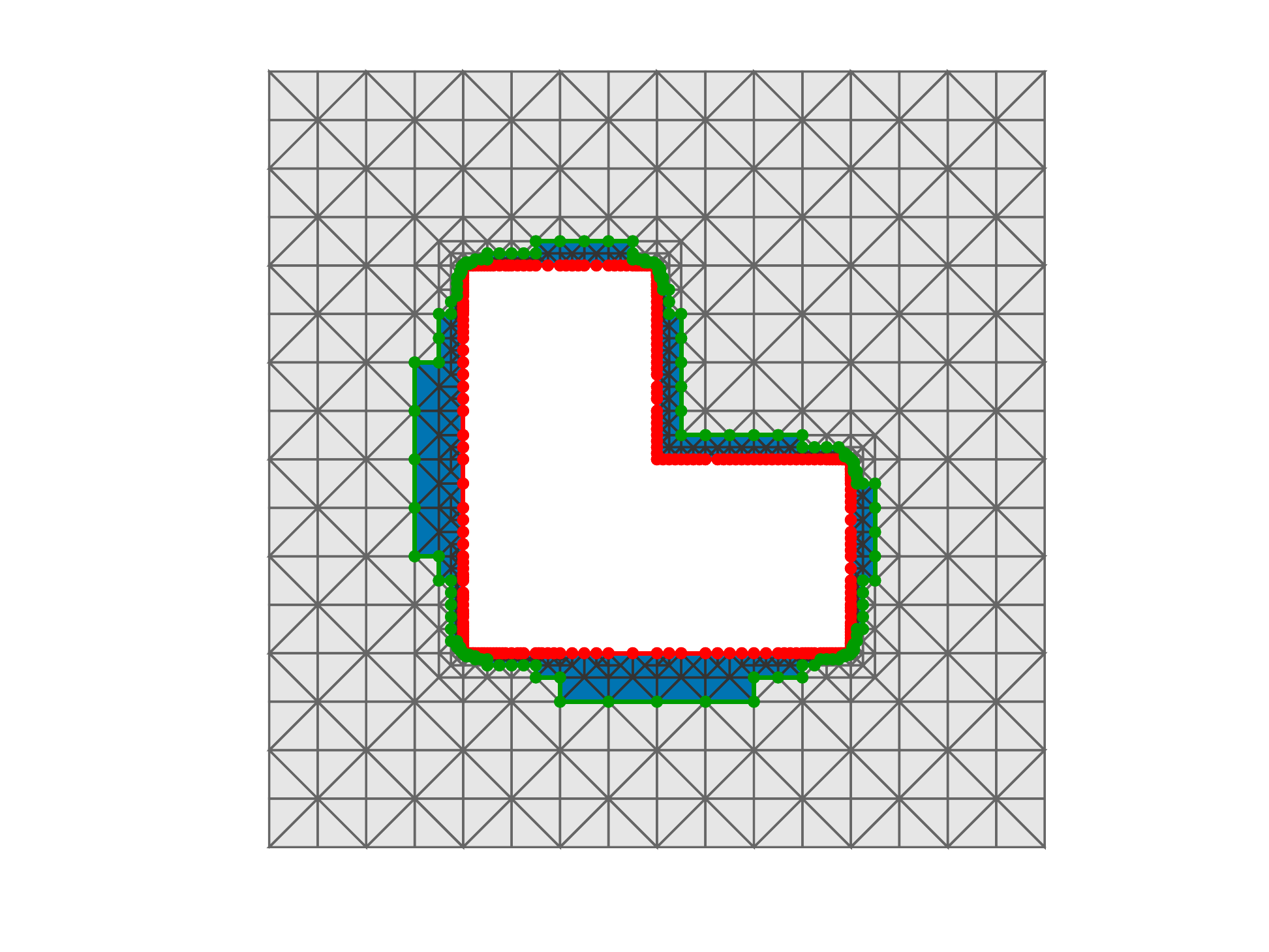}
$\#\TT_{h}^S = 1316$, $\#\FF_{h}^\Gamma = 461$
\end{subfigure}
\hfill
\begin{subfigure}[c]{.24\textwidth}
\centering\tiny
$\ell = 35$\\[1mm]
\includegraphics[trim={4.1cm 1.1cm 3.4cm 1cm},clip,width=\textwidth]{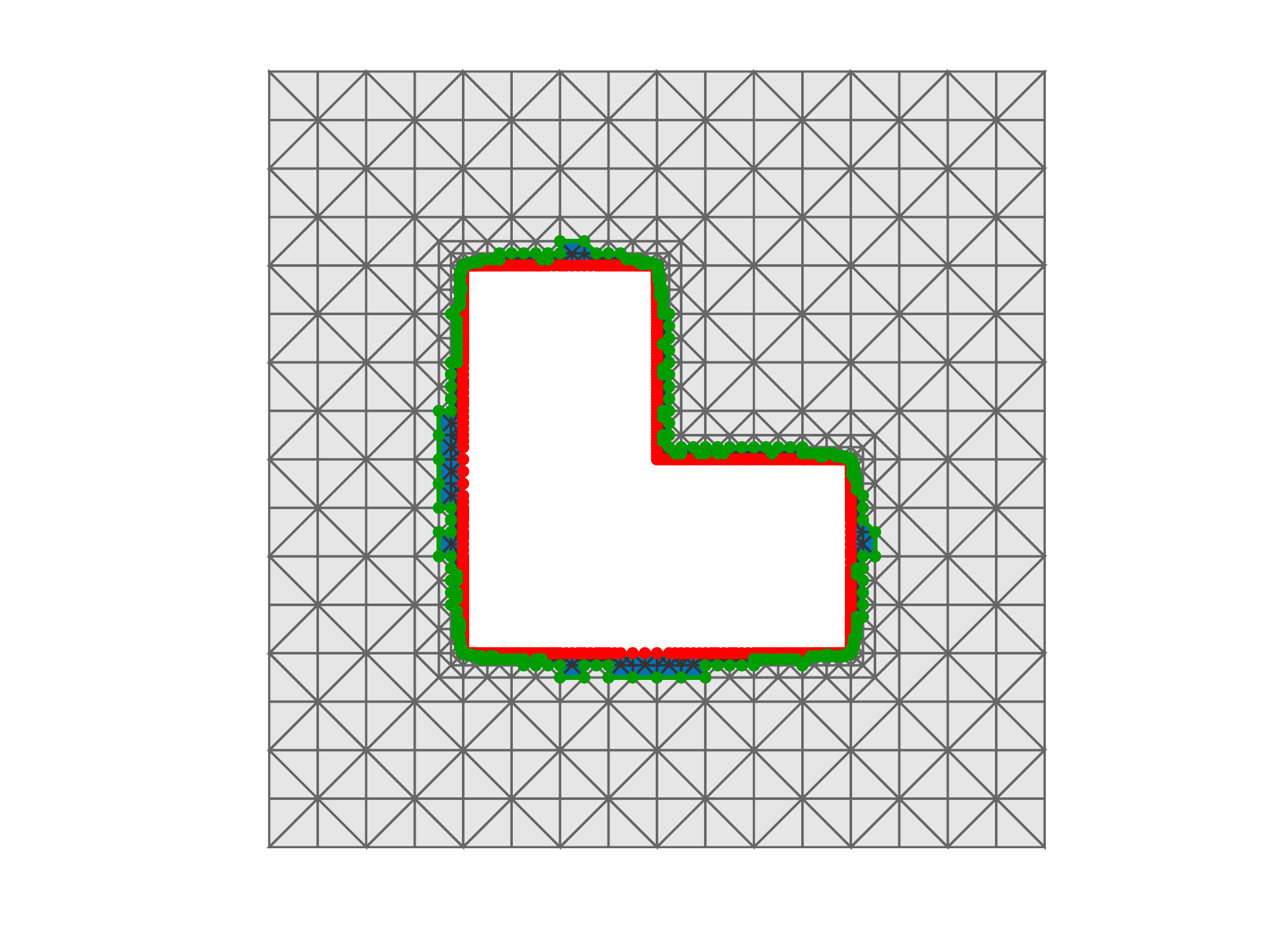}
$\#\TT_{h}^S = 2891$, $\#\FF_{h}^\Gamma = 1024$
\end{subfigure}
\caption{Adaptively generated meshes ($\theta=0.6$) in Example~\ref{example4}. We indicate the boundary layer {\bf\color{tuwBlue}$\boldsymbol{S}$~(blue)}, the boundary {\bf\cred$\boldsymbol{\Gamma}$~(red)}, and the interior boundary {\bf\color{green!50!black}$\boldsymbol{\Gamma^c = \partial S \setminus \Gamma}$~\bf(green)};  see Figure~\ref{fig:example1:mesh} for the color code.}
\label{fig:example4:mesh}
\end{figure}
\begin{figure}[t]
\centering
\includegraphics[trim={.7cm 0 0 0},clip,width=.49\textwidth]{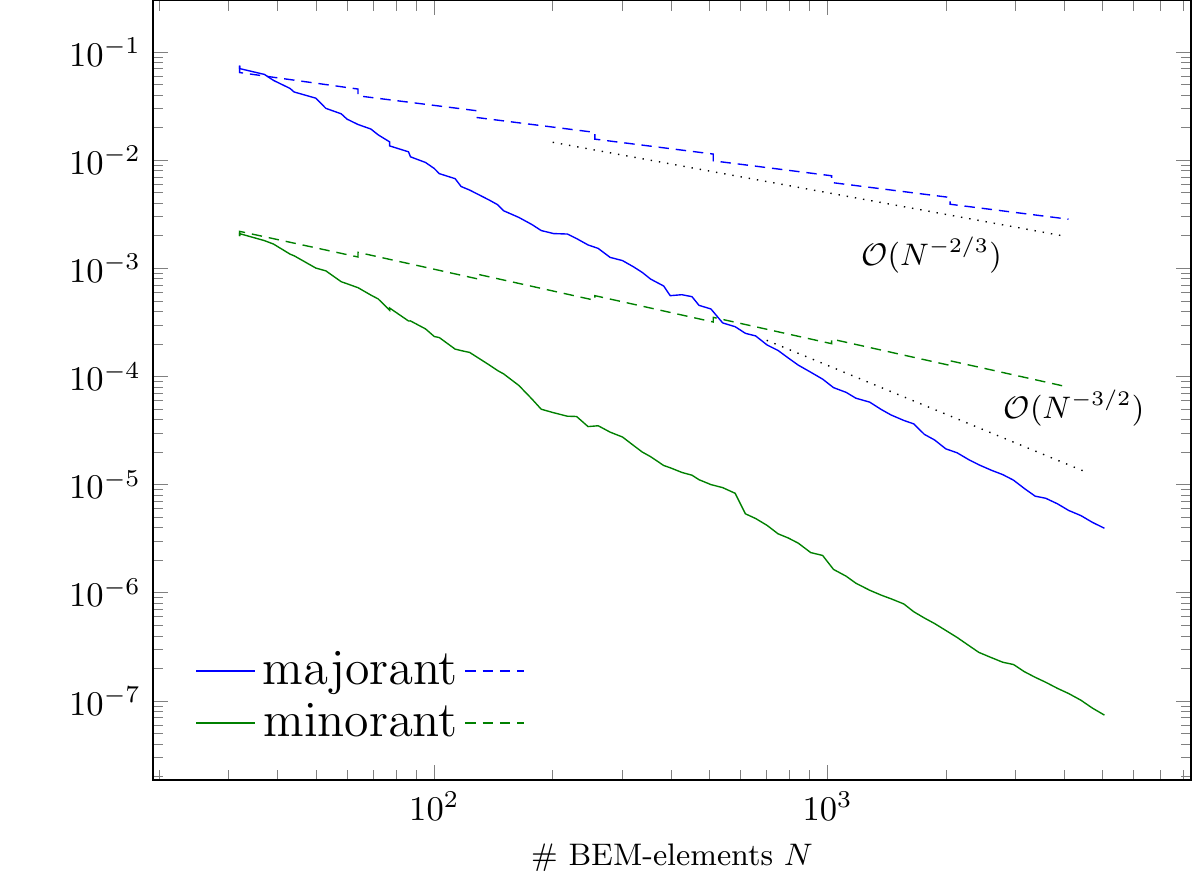}
\includegraphics[trim={.7cm 0 0 0},clip,width=.49\textwidth]{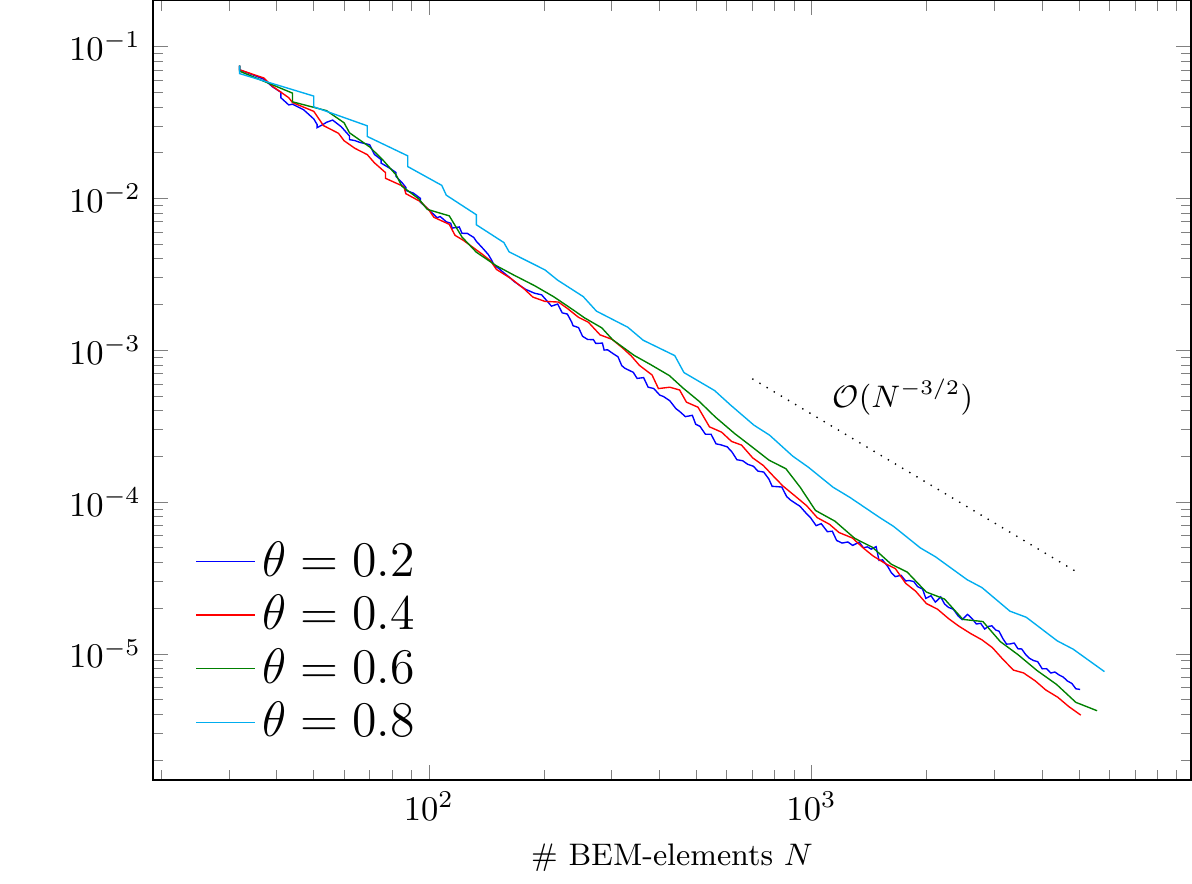}
\caption{Comparison of adaptive vs.\ uniform mesh-refinement in Example~\ref{example4}, The majorant is computed by $\PP^1$-FEM. \emph{Left:} Since the potential error $\norm{\nabla(u-u_h)}{\Lt(\Omega)}$ is unknown and ${\rm osc}_h = 0$, we only compare the majorant $\norm{\nabla w_h}{\Lt(\Omega)}$ from~\eqref{maj:discrete} and the minorant $\mmin(\ttau_h)^{1/2}$ from~\eqref{eq:experiments:minorant} for uniform (dashed) and adaptive mesh-refinement (solid) with $\theta = 0.4$. 
\emph{Right:} We compare the majorant for adaptive mesh-refinement for various choices of $\theta$.}
\label{fig:example4}
\end{figure}
%
\subsection{\revision{Exterior} domains}
One particular strength of BEM is that it naturally allows to consider also 
exterior domain problems formulated on unbounded Lipschitz domains $\Omega^c := \R^d \backslash \overline\Omega$. In this case, the homogeneous Dirichlet--Laplace problem subject to given inhomogeneous boundary data $g$ reads
\begin{subequations}
\label{eq:exterior}
\begin{align}\label{eq:exterior:strongform}
 \Delta u = 0 \quad \text{in } \Omega^c, 
 \qquad
 u = g \quad \text{on } \Gamma,
\end{align}
supplemented by the radiation (decay) condition (for $|x| \to \infty$)
\begin{align}\label{eq:exterior:radiation}
 u(x) = \OO(\log|x|) \quad \text{for } d = 2
 \qquad \text{resp.} \qquad 
 u(x) = \OO(1/|x|) \quad \text{for } d = 3.
\end{align}
\end{subequations}
We note that the latter radiation condition is naturally incorporated 
into the potential operators (due to the choice of the fundamental solution
with right decay) arising in BEM, e.g., 
any single-layer potential $\widetilde V\phi_h$ satisfies~\eqref{eq:exterior:radiation}.

We note that the functional error identities from Theorem~\ref{prop:continuousmajmin} 
(with $\Omega$ being replaced by the exterior domain $\Omega^c$) 
remain valid (in principal) for any 
\revision{
\begin{align*}
v \in \Lt_{\sf loc}(\overline{\Omega^c})
:=\set{\varphi}{\str{\varphi}{\Xi\,\cap\,\Omega^{c}}\in\Lt(\Xi\,\cap\,\Omega^{c})
\text{ for all compact }\Xi\subset\R^{d}}
\end{align*}
with $\nabla v \in \Lt(\Omega^c)$ and $\Delta v = 0$. 
More precisely, a proper solution theory for \eqref{eq:exterior}
is available in the weighted Sobolev space $\H^{1}_{-1}(\Omega^c)$ defined by, e.g., for $d=3$,
\begin{align*}
\H^{1}_{-1}(\Omega^c)
:=\set{\varphi}{\varphi(\cdot)/|\cdot| \in\Lt(\Omega^c),\;\nabla\varphi\in\Lt(\Omega^c)};
\end{align*}
see, e.g., \cite{PR2009a,zbMATH06245058},
where also functional {\sl a~posteriori} error estimates 
for corresponding exterior domain problems for the Poisson equation $\Delta u=f$ have been proved.
}%
Consequently, the computable upper and lower bounds of Theorem~\ref{cor:continuousupperbound} 
(resp.\ Corollary~\ref{cor:continuouslowerboundtwo})
hold (with appropriate modifications)
for any approximation $\phi_h \approx \phi$ and $v := \widetilde V \phi_h$. In particular, Algorithm~\ref{algorithm} can also be applied to BEM for exterior domain problems.

\example{Direct BEM for exterior problem}
\label{example4}
To illustrate the latter observation, we consider the exterior domain  
\begin{align*}
\Omega^c := \R^2 \backslash \overline\Omega,\qquad
 \Omega = (0, 1/2)^2 \backslash \big([(1/4,1/2]\times[0,1/4] \big),
\end{align*}
where $\Omega$ is the L-shaped domain from Section~\ref{example2}.
We consider~\eqref{eq:exterior} with constant Dirichlet data
\begin{align}
 g = 1 = (1/2 - K)1 \quad \text{on } \Gamma,
\end{align}
where $K$ is the double-layer integral operator. Consequently, the corresponding indirect BEM formulation~\eqref{eq:weakform} turns out to be a direct BEM formulation for the exterior domain problem~\cite{mclean,sauter-schwab}, where all data oscillation terms vanish. Thus, one can expect that the sought density $\phi \in \H^{-1/2}(\Gamma)$ has singularities at the convex corners of $\Omega$ (but \emph{not} at the reentrant corner). 

We employ Algorithm~\ref{algorithm} (with Galerkin BEM). The initial mesh $\TT_h$ with $\#\TT_h = 416$ right triangles is a triangulation of $(-1/4,3/4)^2 \backslash \overline\Omega \subset \Omega^{c}$; see Figure~\ref{fig:example4:mesh}. Some numerical results are shown in Figure~\ref{fig:example4}. Since the exact potential $u$ is unknown, we cannot compute the potential error $\norm{\nabla(u-u_h)}{\Lt(\Omega)}$. However, adaptive mesh-refinement leads to the optimal convergence behavior of majorant and minorant (and hence also of $\norm{\nabla(u-u_h)}{\Lt(\Omega)}$).

\section{Conclusion}
\label{section:conclusion}

We have presented, for the first time, functional error estimates for BEM. Not only that the presented estimates are independent of the specific discretization method (i.e., Galerkin or collocation), they also provide guaranteed upper and lower bounds for the unknown energy error. This is in contrast to existing techniques, which usually contain generic constants. The error bounds are obtained by solving auxiliary variational problems by \revision{FEM} on a \revision{boundary layer $S \subset \Omega$}.
\revision{One possible disadvantage of our approach is that it needs a volume mesh for $S$ to solve the auxiliary FEM problems. However, this appears to be a standard problem for FEM mesh generation.}

In the paper, we consider the Dirichlet problem of the Laplace equation, but the approach is expected to generalize to other boundary value problems. In the considered case, the upper error bound is based on the Dirichlet principle, while the lower error bound is based either on a variational problem in terms of a potential (scalar stream function in 2D and vector potential in 3D) or a mixed problem (in 2D and 3D). The upper bound is localized and drives an adaptive refinement of the boundary mesh. Since 
\revision{$S$} contains always two layers of elements, it geometrically shrinks towards the boundary during refinement.  This way, the ratio between \revision{the FEM DoF} for obtaining the error estimates and the BEM DoF remains bounded. We have examined various 2D test problems on square and L-shaped domains, with and without singular potential, including exterior problems. The proposed adaptive algorithm exhibited excellent performance. In all cases, the optimal convergence rates could be achieved.

\revision{Ongoing work concerns the
further analysis of the oscillations of $g-\str{u_{h}}{\Gamma}$ 
and the implementation of higher-order $\Lt$-projections, 
which may overcome the lack of accuracy of the majorant and minorant 
observed in our numerical experiments for very coarse BEM meshes.}
\revision{An implementation of the proposed algorithm in 3D 
and the extension to electromagnetic problems 
is 
also the subject of future research.}


\bibliographystyle{plain}
\bibliography{literature}



\section*{Appendix: Some remarks on the analysis}
\label{section:appendix}

We recall the notations introduced in Section~\ref{section:domainsfspaces}, 
in particular, the harmonic extension operator \ $\widehat{(\cdot)} : \H^{1/2}(\Gamma) \to \Ho(\Omega)$
from~\eqref{eq:lifting}. Moreover, we add the definitions of
\begin{align*}
\Ho_{\Gamma^{c}}(S)
&:=\overline{\set{\varphi|_{S}}{\varphi\in\mathsf{C}^{\infty}(\R^{d}),\,
\supp\varphi\text{ compact},\,\dist(\supp\varphi,\Gamma^{c})>0}}^{\,\Ho(S)}\\
&\;=\set{\varphi\in\Ho(S)}{\str{\varphi}{\Gamma^{c}}=0},
\intertext{see, e.g., \cite{bauerpaulyschomburg2016}, for the density result, and}
{\H_{\Gamma^{c}}(\div,S)} 
&:=\overline{\set{\ssigma|_{S}}{\ssigma\in\mathsf{C}^{\infty}(\R^{d}),\,
\supp\ssigma\text{ compact},\,\dist(\supp\ssigma,\Gamma^{c})>0}}^{\,\H(\div,S)}.
\end{align*}
The latter space generalizes the (partial) homogeneous boundary condition 
$\ntr{\ssigma}{\Gamma^c} = 0$ to functions $\ssigma \in \H(\div,S)$.
Note that the natural trace $\ntr{\ssigma}{\partial S}$ 
can be 'restricted' to, e.g., $\Gamma$ in the following sense.

\begin{remark} 
\label{Hgraddivgammarem}
Functions in $\Ho_{\Gamma^{c}}(S)$ vanish at $\Gamma^{c}$.
In particular, any function $\varphi\in\Ho_{\Gamma^{c}}(S)$ can be extended 
by zero to a function $\varphi\in\Hoom$.
Analogously, vector fields in ${\H_{\Gamma^{c}}(\div,S)}$ 
have vanishing normal component at $\Gamma^{c}$ in a weak sense.
In particular, any vector field $\ssigma\in{\H_{\Gamma^{c}}(\div,S)}$ can be extended 
by zero to a vector field $\ssigma\in\Hdivom$. Hence, 
the normal trace $\ntr{\ssigma}{\partial S}\in\H^{-1/2}(\partial S)$ of $\ssigma$
may be identified with a well defined element $\ntr{\ssigma}{\Gamma}\in\H^{-1/2}(\Gamma)$
vanishing on $\Gamma^{c}$ in a weak sense.
\end{remark}

Let us discuss the minimiser $\overline w=u-v$ of the upper bound
and the maximiser $\underline{\ttau}=\grad\overline w$ of the lower bound 
from Theorem \ref{prop:continuousmajmin} in more detail. 
Note that\footnote{As the exterior derivative commutes 
with the trace operator, which is simply the pull-back
of the canonical embedding of the boundary manifold $\Gamma$ into $\overline{\Omega}$
(i.e., $\iota^{*}\text{d}=\text{d}\iota^{*}$), we see for the special case
of $\varphi\in\Hoom$ that 
$\ttr{\grad\varphi}{\Gamma}
=\grad_{\Gamma}\str{\varphi}{\Gamma}$
in $\H^{-1/2}(\Gamma)$, where $\ttr{(\cdot)}{\Gamma}:\H(\curl,\Omega)\to\H^{-1/2}(\Gamma)$ 
denotes the tangential trace and $\grad_{\Gamma}:\H^{1/2}(\Gamma)\to\H^{-1/2}(\Gamma)$ the surface gradient.}
\begin{align}
\nonumber
\overline w
&=u-v\in\Hoom,
&
\underline{\ttau}
&=\grad\overline w\in\Hdivom\cap\Hcurlom,\\
\label{eqtauwone}
\Delta\overline w&=0
\text{ in }\Omega,
&
\div\underline{\ttau}&=0
\text{ in }\Omega,\\
\nonumber
&&
\curl\underline{\ttau}&=0
\text{ in }\Omega,\\
\nonumber
\str{\overline w}{\Gamma}&=g-\str{v}{\Gamma}
\text{ on }\Gamma,
&
\ttr{\underline{\ttau}}{\Gamma}&=\grad_{\Gamma}(g-\str{v}{\Gamma})
\text{ in }\H^{-1/2}(\Gamma).
\end{align}
Moreover, by replacing $w$ with $\overline w+\eps\varphi$
and by replacing $\ttau$ with $\underline{\ttau}+\eps\ssigma$ 
in~\eqref{eq:prop:continuousmajmin}, where
$\varphi\in\Hozom$ and $\ssigma\in\Hdivom$ with $\div\ssigma=0$ 
as well as $\eps\in\R$,
we obtain the variational formulations
\begin{subequations}
\begin{align}
\label{wvarform}
\forall\,\varphi&\in\Hozom
&
\scp{\grad\overline w}{\grad\varphi}{\Lt(\Omega)}
&=0,\\
\label{tauvarform}
\forall\,\ssigma&\in\Hdivom\;\text{with}\;\div\ssigma=0
&
\scp{\underline{\ttau}}{\ssigma}{\Lt(\Omega)} 
&=\dualpga{g-\str{v}{\Gamma}}{\ntr{\ssigma}{\Gamma}}.
\end{align}
\end{subequations}
Let $\psi\in\H^{1/2}(\Gamma)$ and let $\widehat\psi\in\Hoom$ be its harmonic extension. 
Testing the second variational formulation~\eqref{tauvarform}
with $\ssigma=\grad\widehat\psi$ shows
\begin{align*}
\dualpga{\psi}{\ntr{\underline{\ttau}}{\Gamma}}
=\dualpga{g-\str{v}{\Gamma}}{\ntr{\grad\widehat\psi}{\Gamma}}.
\end{align*}
Thus, additionally to the scalar and tangential boundary conditions 
for $\overline w$ and $\underline{\ttau}$ in~\eqref{eqtauwone}, respectively,
we have also found a normal boundary condition for $\underline{\ttau}=\grad\overline w$, namely
\begin{align}
\label{nbctau}
\ntr{\underline{\ttau}}{\Gamma}
=\bdualpga{g-\str{v}{\Gamma}}{\ntr{\grad\widehat{(\,\cdot\,)}}{\Gamma}}
\quad\text{in}\quad\H^{-1/2}(\Gamma).
\end{align}
This shows that there are different options for computing $\overline w$ and $\underline{\ttau}$.

\begin{remark}
Note that 
\begin{align*}
\str{\partial_{n}\widehat{(\,\cdot\,)}}{\Gamma}
=\ntr{\grad\widehat{(\,\cdot\,)}}{\Gamma}
:\H^{1/2}(\Gamma)
\to\H^{-1/2}(\Gamma)
\end{align*}
is the well known Dirichlet-to-Neumann operator for the homogeneous Laplacian.
Moreover, the normal trace of $\underline{\ttau}$ in~\eqref{nbctau}
does not depend on the harmonic extension
as 
\begin{align*}
\dualpga{g-\str{v}{\Gamma}}{\ntr{\grad\widehat\psi}{\Gamma}}
=\dualpga{\psi}{\ntr{\underline{\ttau}}{\Gamma}}
=\bdualpga{\psi}{\ntr{\grad(u-v)}{\Gamma}}
\quad \text{for all $\psi\in\H^{1/2}(\Gamma)$}
.
\end{align*}
\end{remark}

Recalling Theorem \ref{prop:continuousmajmin}
and Theorem~\ref{cor:continuousupperbound}, we note the following.

\begin{remark}[Minimiser of the upper bound]
\label{prop:continuousmajmin:remub}
The unique minimiser $\overline w$ of the upper bound
is the unique harmonic extension of
$g-\str{v}{\Gamma}$ to $\Omega$, i.e., $\overline w\in\Hoom$ 
is the unique solution of the Dirichlet--Laplace problem
\begin{align*}
\Delta\overline w=0\text{ in }\Omega,\qquad
\str{\overline w}{\Gamma}=g-\str{v}{\Gamma}\text{ on }\Gamma.
\end{align*}
It holds 
$\scp{\grad\overline w}{\grad\varphi}{\Lt(\Omega)}=0$ for all $\varphi\in\Hozom$.
Moreover, $\overline w\in\Hoom$ 
solves the Neumann Laplace problem
\begin{align*}
\Delta\overline w=0\text{ in }\Omega,\qquad
\ntr{\grad\overline w}{\Gamma}
=\bdualpga{g-\str{v}{\Gamma}}{\ntr{\grad\widehat{(\,\cdot\,)}}{\Gamma}}
\quad\text{in}\quad\H^{-1/2}(\Gamma).
\end{align*}
It holds 
$\scp{\grad\overline w}{\grad\varphi}{\Lt(\Omega)}=
\bdualpga{g-\str{v}{\Gamma}}{\ntr{\grad\widehat{(\str{\varphi}{\Gamma})}}{\Gamma}}$ 
for all $\varphi\in\Hoom$.
Note that, at least analytically, both formulations can also be used to find
the unique maximiser $\underline{\ttau}=\grad\overline w$ of the upper bound.
For numerical purposes the Dirichlet--Laplace problem
is the better and easier choice to compute $\overline w$.
\end{remark}

Next we want to find equations and variational formulations for $\underline{\ttau}$
not involving $\overline w$.
For this, let us introduce Dirichlet and Neumann fields
\begin{align*}
\mathcal{H}_{D}(\Omega)
&:=\set{\ssigma\in\H_{0}(\curl,\Omega)\cap\H(\div,\Omega)}{\curl\ssigma=0,\,\div\ssigma=0},\\
\mathcal{H}_{N}(\Omega)
&:=\set{\ssigma\in\Hcurlom\cap\H_{0}(\div,\Omega)}{\curl\ssigma=0,\,\div\ssigma=0},
\end{align*}
where
\begin{align*}
\H_{0}(\curl,\Omega)
&:=\overline{\set{\ssigma\in\mathsf{C}^{\infty}(\Omega)}
{\supp\ssigma\text{ compact in }\Omega}}^{\, \Hcurlom},\\
\H_{0}(\div,\Omega)
&:=\overline{\set{\ssigma\in\mathsf{C}^{\infty}(\Omega)}
{\supp\ssigma\text{ compact in }\Omega}}^{\, \Hdivom}.
\end{align*}
We compute
\begin{align*}
\forall\,\ssigma&\in\mathcal{H}_{D}(\Omega)
&
\scp{\underline{\ttau}}{\ssigma}{\Lt(\Omega)}
&=\dualpga{g-\str{v}{\Gamma}}{\ntr{\ssigma}{\Gamma}},\\
\forall\,\ssigma&\in\mathcal{H}_{N}(\Omega)
&
\scp{\underline{\ttau}}{\ssigma}{\Lt(\Omega)}
&=0.
\end{align*}

\begin{remark}[Maximiser of the lower bound]
\label{prop:continuousmajmin:remlb}
The unique maximiser $\underline{\ttau}=\grad\overline w$ 
of the lower bound is the unique solution of the electro-static Maxwell problem
\begin{align*}
\curl\underline{\ttau}
&=0
&&\text{in }\Omega,\\
\div\underline{\ttau}
&=0
&&\text{in }\Omega,\\
\ttr{\underline{\ttau}}{\Gamma}
&=\grad_{\Gamma}(g-\str{v}{\Gamma})
&&\text{in }\H^{-1/2}(\Gamma),\\
\scp{\underline{\ttau}}{\ssigma}{\Lt(\Omega)}
&=\dualpga{g-\str{v}{\Gamma}}{\ntr{\ssigma}{\Gamma}}
&&\text{for all }\ssigma\in\mathcal{H}_{D}(\Omega),
\intertext{as well as the unique solution of the magneto-static Maxwell problem}
\curl\underline{\ttau}
&=0
&&\text{in }\Omega,\\
\div\underline{\ttau}
&=0
&&\text{in }\Omega,\\
\ntr{\underline{\ttau}}{\Gamma}
&=\bdualpga{g-\str{v}{\Gamma}}{\ntr{\grad\widehat{(\,\cdot\,)}}{\Gamma}}
&&\text{in }\H^{-1/2}(\Gamma),\\
\scp{\underline{\ttau}}{\ssigma}{\Lt(\Omega)}
&=0
&&\text{for all }\ssigma\in\mathcal{H}_{N}(\Omega).
\end{align*}
See \cite{picard1981,picard1982,pauly2019a,bauerpaulyschomburg2016}
for proper solution theories.
\end{remark}

As $\underline{\ttau}\in\Hdivom$ with $\div\underline{\ttau}=0$,
by~\eqref{tauvarform} the vector field $\underline{\ttau}$ solves for all $\omega\in\Lt(\Omega)$
the mixed problem
\begin{align*}
\scp{\underline{\ttau}}{\ssigma}{\Lt(\Omega)} 
+\scp{\div\ssigma}{\omega}{\Lt(\Omega)} 
&=\dualpga{g-\str{v}{\Gamma}}{\ntr{\ssigma}{\Gamma}},\\
\scp{\div\underline{\ttau}}{\psi}{\Lt(\Omega)}
&=0
\end{align*}
for all $(\ssigma,\psi)\in\Hdivom\times\Lt(\Omega)$ with $\div\ssigma=0$.
On the other hand, especially for numerical reasons, 
we want to skip the solenoidal conditions,
which leads to the following mixed variational saddle point formulation
(cf. Theorem~\ref{cor:continuousupperbound}).

\begin{lemma}[Mixed problem for the lower bound]
\label{mixedprobtau}
Let $v\in\Hoom$ with $\Delta v=0$. 
Then the mixed problem
\begin{align*}
\scp{\ttau}{\ssigma}{\Lt(\Omega)} 
+\scp{\div\ssigma}{\omega}{\Lt(\Omega)} 
&=\dualpga{g-\str{v}{\Gamma}}{\ntr{\ssigma}{\Gamma}},\\
\scp{\div\ttau}{\psi}{\Lt(\Omega)}
&=0
\end{align*}
for all $(\ssigma,\psi)\in\Hdivom\times\Lt(\Omega)$,
admits a unique solution $(\ttau,\omega)\in\Hdivom\times\Lt(\Omega)$.
Moreover, $(\ttau,\omega)=(\underline{\ttau},\overline w)$, i.e.,
the latter mixed formulation 
can be used to compute the unique maximiser $\underline{\ttau}=\grad\overline w$
and the unique minimiser $\overline w$ simultaneously.
\end{lemma}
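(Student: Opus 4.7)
The plan is two-fold: first, establish well-posedness of the mixed system by the classical Brezzi saddle-point theory, and second, identify the unique solution with $(\underline{\ttau},\overline w)$ by checking that the latter pair satisfies the two equations. Since the right-hand side of the first equation is the continuous linear functional $\ssigma\mapsto\dualpga{g-\str{v}{\Gamma}}{\ntr{\ssigma}{\Gamma}}$ on $\Hdivom$ (continuity follows from the continuity of the normal trace $\ntr{(\cdot)}{\Gamma}:\Hdivom\to\H^{-1/2}(\Gamma)$ together with $g-\str{v}{\Gamma}\in\H^{1/2}(\Gamma)$) and the second right-hand side is the zero functional on $\Lt(\Omega)$, we are in the standard framework for mixed formulations of second-order elliptic equations.

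For Brezzi's conditions, consider the bilinear forms $a(\ttau,\ssigma):=\scp{\ttau}{\ssigma}{\Lt(\Omega)}$ on $\Hdivom\times\Hdivom$ and $b(\ssigma,\psi):=\scp{\div\ssigma}{\psi}{\Lt(\Omega)}$ on $\Hdivom\times\Lt(\Omega)$, which are obviously continuous. The kernel of $b(\cdot,\cdot)$ in the first slot is $\Hdivzom=\set{\ssigma\in\Hdivom}{\div\ssigma=0}$, and on this kernel $\norm{\ssigma}{\Hdivom}^{2}=\norm{\ssigma}{\Lt(\Omega)}^{2}=a(\ssigma,\ssigma)$, so $a$ is coercive on $\ker b$. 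For the inf-sup condition it suffices, given $\psi\in\Lt(\Omega)$, to produce $\ssigma\in\Hdivom$ with $\div\ssigma=\psi$ and $\norm{\ssigma}{\Hdivom}\le C\norm{\psi}{\Lt(\Omega)}$. The standard construction takes $\ssigma:=\grad\varphi$, where $\varphi\in\Hozom$ solves the homogeneous Dirichlet--Poisson problem $-\Delta\varphi=-\psi$ in $\Omega$; Poincar\'e's inequality then yields $\norm{\grad\varphi}{\Lt(\Omega)}\le C\norm{\psi}{\Lt(\Omega)}$, so that $\norm{\ssigma}{\Hdivom}^{2}=\norm{\grad\varphi}{\Lt(\Omega)}^{2}+\norm{\psi}{\Lt(\Omega)}^{2}\le(C^{2}+1)\norm{\psi}{\Lt(\Omega)}^{2}$. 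Brezzi's theorem (see, e.g., \cite{boffibrezzifortin}) then delivers existence and uniqueness of $(\ttau,\omega)\in\Hdivom\times\Lt(\Omega)$.

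To identify the solution, I would verify that $(\ttau,\omega):=(\underline{\ttau},\overline w)=(\grad\overline w,\overline w)$ satisfies both equations. Since $\overline w=u-v\in\Hoom$ is harmonic, $\div\underline{\ttau}=\Delta\overline w=0$ in $\Lt(\Omega)$, which is the second equation. For the first equation, integration by parts for $\H(\div)$-fields paired with $\H^{1}$-functions gives, for arbitrary $\ssigma\in\Hdivom$,
\begin{align*}
\scp{\grad\overline w}{\ssigma}{\Lt(\Omega)}+\scp{\div\ssigma}{\overline w}{\Lt(\Omega)}
=\dualpga{\str{\overline w}{\Gamma}}{\ntr{\ssigma}{\Gamma}}
=\dualpga{g-\str{v}{\Gamma}}{\ntr{\ssigma}{\Gamma}},
\end{align*}
since $\str{\overline w}{\Gamma}=\str{u}{\Gamma}-\str{v}{\Gamma}=g-\str{v}{\Gamma}$. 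Uniqueness then forces $(\ttau,\omega)=(\underline{\ttau},\overline w)$. The only delicate point is making sure the inf-sup construction lives in the right class (that is, the lifting $\varphi$ really belongs to $\Hozom$ so that $\grad\varphi\in\Hdivom$), but this is routine and is the usual ingredient needed to treat the Dirichlet problem in mixed form.
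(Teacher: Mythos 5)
Your proof is correct. The well-posedness half is essentially identical to the paper's argument: same bilinear forms, same coercivity of $\scp{\cdot}{\cdot}{\Lt(\Omega)}$ on the divergence-free kernel, and the same inf-sup construction via a Dirichlet lifting $\ssigma=\grad\varphi$ with $\varphi\in\Hozom$ solving $\Delta\varphi=\psi$, with the Friedrichs constant controlling $\norm{\grad\varphi}{\Lt(\Omega)}$. Where you genuinely diverge is the identification step. The paper works \emph{forward from the equations}: it deduces $\div\ttau=0$ from the second line, tests the first line with compactly supported fields to conclude (distributionally) that $\omega\in\Hoom$ with $\grad\omega=\ttau$, and then uses surjectivity of the normal trace $\ntr{(\cdot)}{\Gamma}:\Hdivom\to\H^{-1/2}(\Gamma)$ to recover the boundary condition $\str{\omega}{\Gamma}=g-\str{v}{\Gamma}$, whence $\omega=\overline w$. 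You instead \emph{verify the known candidate}: you check that $(\grad\overline w,\overline w)$ satisfies both equations via the Green's formula $\scp{\grad\overline w}{\ssigma}{\Lt(\Omega)}+\scp{\div\ssigma}{\overline w}{\Lt(\Omega)}=\dualpga{\str{\overline w}{\Gamma}}{\ntr{\ssigma}{\Gamma}}$ and invoke the already-established uniqueness. Your route is shorter and avoids both the distributional regularity argument and the trace surjectivity; the paper's route has the advantage of extracting the extra information that the Lagrange multiplier $\omega$ automatically enjoys $\Ho$-regularity and solves the Dirichlet--Laplace problem, which is precisely the observation exploited in Remark~\ref{mixedprobtaurem}. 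Both arguments are complete and rigorous.
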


\begin{remark}
\label{mixedprobtaurem}
The mixed formulation in Lemma~\ref{mixedprobtau} 
is the mixed formulation of the Dirichlet--Laplace problem 
from Remark~\ref{prop:continuousmajmin:remub}.
For numerical purposes the latter mixed formulation
is only a good choice to compute $\underline{\ttau}$,
since the numerical approximations will only satisfy
$(\ttau,\omega)\in\Hdivom\times\Lt(\Omega)$ 
but in general not $\omega\in\Hoom$.
\end{remark}

\begin{proof}[Proof of Lemma~\ref{mixedprobtau}.]
The inner product $\scp{\,\cdot\,}{\,\cdot\,}{\Lt(\Omega)}$ is positive
on the kernel $\set{\ssigma\in\Hdivom}{\div\ssigma=0}$
and the inf-sup-condition is satisfied as for $\psi\in\Lt(\Omega)$
\begin{align*}
\sup_{\ssigma\in\Hdivom}
\frac{\scp{\div\ssigma}{\psi}{\Lt(\Omega)}}
{\norm{\ssigma}{\Hdivom}\norm{\psi}{\Lt(\Omega)}}
\geq\frac{\norm{\psi}{\Lt(\Omega)}}
{\sqrt{\norm{\ssigma_{\psi}}{\Lt(\Omega)}^2+\norm{\psi}{\Lt(\Omega)}^2}}
\geq\frac{1}{\sqrt{c_{F}^2+1}}.
\end{align*}
This follows by solving a Dirichlet--Laplace problem, i.e.,
by finding the unique solution $\omega_{\psi}\in\Hozom$ of
\begin{align*}
\Delta \omega_{\psi}=\psi\text{ in }\Omega,\qquad
\str{\omega_{\psi}}{\Gamma}
=0\text{ on }\Gamma,
\end{align*}
and setting $\ssigma_{\psi}=\grad\omega_{\psi}$.
Note that the estimate 
$\norm{\ssigma_{\psi}}{\Lt(\Omega)}\leq c_{F}\norm{\psi}{\Lt(\Omega)}$
holds true, where $0<c_{F}\leq\diam(\Omega)/\pi$ is the Friedrichs constant
for the gradient operator on $\Hozom$.
Therefore, the standard saddle point theory for mixed problems
shows the unique solvability, see, e.g., \cite{boffibrezzifortin}.
Moreover, we have $\div\ttau=0$ by the second line of the mixed formulation.
Testing the first line with compactly supported test vector fields $\ssigma$
shows that $\omega\in\Hoom$ with $\grad\omega=\ttau$.
Hence $\Delta\omega=\div\ttau=0$. Furthermore, the first line implies 
\begin{align*}
\dualpga{\str{\omega}{\Gamma}}{\ntr{\ssigma}{\Gamma}}
=\dualpga{g-\str{v}{\Gamma}}{\ntr{\ssigma}{\Gamma}}
\end{align*}
for all $\ssigma\in\Hdivom$, yielding $\str{\omega}{\Gamma}=g-\str{v}{\Gamma}$
by the surjectivity of the normal trace operator $\ntr{(\cdot)}{\Gamma}$.
Thus $\omega=u-v=\overline w$ and $\ttau=\grad\omega=\grad\overline w=\underline{\ttau}$.
\end{proof}

\begin{remark}
\label{cor:continuouslowerboundtwo:rem}
The continuous version of \eqref{eq:lower-bound2ddiscrete}
in Corollary~\ref{cor:continuouslowerboundtwo} reads:
Find $w\in\Ho_{\Gamma^{c}}(S)$ such that
\begin{equation}
\label{eq:lower-bound2d}
\scp{\grad w}{\grad\varphi}{\Lt(S)}
=\dualpga{g-\str{v}{\Gamma}}{\ntr{\vec\curl\,\varphi}{\Gamma}}
\qquad\text{for all }\varphi\in\Ho_{\Gamma^c}(S).
\end{equation}
Then $w\in\Ho(S)$ is the unique solution of the mixed Dirichlet--Neumann--Laplace problem 
\begin{align*}
\Delta w=0\text{ in }S,\quad
\str{w}{\Gamma^{c}}=0\text{ on }\Gamma^{c},\quad
\ntr{\grad w}{\Gamma}
=\bdualpga{g-\str{v}{\Gamma}}{\ntr{\vec\curl\,\widehat{(\,\cdot\,)_{\partial S}}}{\Gamma}}
\text{ in }\H^{-1/2}(\Gamma).
\end{align*}
To see this, we pick different test functions.
Testing \eqref{eq:lower-bound2d} with compactly supported (in $S$) smooth functions
shows $\Delta w=0$ in $S$, and by definition, i.e.,
$w\in\Ho_{\Gamma^{c}}(S)$, it is clear that $\str{w}{\Gamma^{c}}=0$ on $\Gamma^{c}$.
Let $\phi\in\H^{1/2}(\Gamma)$, define
\begin{align*}
\phi_{\partial S}
:=\begin{cases}
\phi&\text{on }\Gamma,\\
0&\text{on }\Gamma^{c},
\end{cases}
\end{align*}
and let $\varphi:=\widehat{\phi_{\partial S}}\in\Ho(S)$ 
be the unique harmonic extension to $S$ of $\phi_{\partial S}$,
compare to~\eqref{eq:lifting}. 
Then $\varphi\in\Ho_{\Gamma^{c}}(S)$ and
testing~\eqref{eq:lower-bound2d} with $\varphi$ yields
\begin{align*}
\dualpga{g-\str{v}{\Gamma}}{\ntr{\vec\curl\,\widehat{\phi_{\partial S}}}{\Gamma}}
=\dualpga{\phi}{\ntr{\grad w}{\Gamma}},
\end{align*}
i.e., $\ntr{\grad w}{\Gamma}=\bdualpga{g-\str{v}{\Gamma}}{\ntr{\vec\curl\,\widehat{(\,\cdot\,)_{\partial S}}}{\Gamma}}$
in $\H^{-1/2}(\Gamma)$.

By construction, $\widetilde w$, the extension by zero to $\Omega$, belongs to $\Hoom$ 
and hence we have $\ttau:=\vec\curl \, \widetilde w \in\Hdivom$ with $\div\ttau=0$.
Theorem~\ref{prop:continuousmajmin} shows
\begin{align*}
2\dualpga{g-\str{v}{\Gamma}}{\ntr{\vec\curl \, \widetilde w}{\Gamma}}
-\norm{\grad w}{\Lt(S)}^2
\leq\bnorm{\grad(u-v)}{\Lt(\Omega)}^2.
\end{align*}
\end{remark}


\end{document}